\documentclass[12pt]{article}
\usepackage[margin=1in]{geometry}

\usepackage{graphicx} 
\usepackage{amsmath, amssymb, amsthm, amssymb }
\usepackage{MnSymbol}
\usepackage{tikz}

\usepackage{hyperref}
\usepackage{xspace}

\newtheorem{theorem}{Theorem}[section]
\newtheorem{lemma}[theorem]{Lemma}

\newtheorem{conjecture}[theorem]{Conjecture}
\newtheorem{corollary}[theorem]{Corollary}
\newtheorem{proposition}[theorem]{Proposition}
\newcounter{claims}[theorem]
\newtheorem{claim}[claims]{Claim}

\theoremstyle{definition}
\newtheorem{definition}[theorem]{Definition}

\newcommand{\mc}[1]{\mathcal{#1}}

\newcommand{\res}{\!\!\upharpoonright}

\newcommand{\bfSigma}{\boldsymbol{\Sigma}}

\newcommand{\bfPi}{\mathbf{\Pi}}
\newcommand{\compact}{\textup{\textsf{CPCT}}\xspace}
\newcommand{\conn}{\textup{\textsf{CONN}}\xspace}
\newcommand{\localconn}{\textup{\textsf{LC}}\xspace}
\newcommand{\ordered}{\textup{\textsf{ORD}}\xspace}
\newcommand{\btw}{\textup{\textsf{BTW}}}

\newcommand{\twodisconn}{\textup{\textsf{CIRC}}\xspace}
\newcommand{\nondegen}{\textup{\textsf{NDEGEN}}\xspace}
\newcommand{\topspace}{\textsf{TOP}\xspace}

\DeclareMathOperator{\homeo}{homeo}
\DeclareMathOperator{\Mod}{Mod}

\DeclareMathOperator{\HCopies}{HCopies}
\DeclareMathOperator{\Copies}{Copies}

\theoremstyle{remark}
\newtheorem{remark}[theorem]{Remark}

\title{Measuring the complexity of characterizing $[0, 1]$, $S^1$, and $\mathbb{R}$ up to homeomorphism}
\author{Matthew Harrison-Trainor and Eissa Haydar\thanks{Harrison-Trainor was partially supported by the National Science Foundation under Grant DMS-2419591/DMS-2153823. This work began while Haydar was an REU student at the University of Michigan funded under this grant.}}

\begin{document}

\maketitle

\begin{abstract}
    In analogy to the study of Scott rank/complexity of countable structures, we initiate the study of the Wadge degrees of the set of homeomorphic copies of topological spaces. One can view our results as saying that the classical characterizations of $[0,1]$ (e.g., as the unique continuum with exactly two non-cut points, and other similar characterizations), appropriated expressed, are the simplest possible characterizations of $[0,1]$. Formally, we show that the set of homeomorphic copies of $[0,1]$ is $\bfPi^0_4$-Wadge-complete. We also show that the set of homeomorphic copies of $S^1$ is $\bfPi^0_4$-Wadge-complete. On the other hand, we show that the set of homeomorphic copies of $\mathbb{R}$ is $\bfPi^1_1$-Wadge-complete. It is the local compactness that cannot be expressed in a Borel way; the set of homeomorphic copies of $\mathbb{R}$ is $\bfPi^0_4$-Wadge-complete within the locally compact spaces.
\end{abstract}

\section{Introduction}

A long-standing program in computable structure theory is the study of Scott rank and Scott complexity. The basic question is, given a countable structure $\mc{A}$, how hard is it to characterize $\mc{A}$ up to isomorphism among all countable structures? This began with Scott \cite{Scott65} who showed that there is always a formula of the infinitary logic $\mc{L}_{\omega_1 \omega}$ characterizing $\mc{A}$ among countable structures. One can then look for the simplest such sentence, with its complexity called the \textit{Scott rank} of $\mc{A}$.\footnote{There are numerous other definitions of Scott rank, most using some notion of back-and-forth relations, but for the purposes of this paper we can ignore them. See, e.g., \cite{Mon}.}

The particular perspective we will take in this paper is that of \textit{Scott complexity} as introduced in \cite{AGHTT} by Alvir, Greenberg, Harrison-Trainor, and Turetsky. The idea is to consider, in the Polish space $\Mod(\mc{L})$ of $\mc{L}$-structures, the set
\[ \Copies(\mc{A}) = \{ \mc{B} : \mc{B} \cong \mc{A}\} \]
of all isomorphic presentations of $\mc{A}$. By a well-known theorem of Lopez-Escobar \cite{LopezEscobar}, $\mc{A}$ has a $\Sigma_\alpha$ Scott sentence if and only if $\Copies(\mc{A})$ is $\bfSigma^0_\alpha$, and similarly for other complexity classes such as $\Pi_\alpha$/$\bfPi^0_\alpha$ and the difference hierarchy. Thus, instead of considering sentences characterizing $\mc{A}$, we can instead study the Wadge degree of $\Copies(\mc{A})$. This Wadge degree is called the \textit{Scott complexity} of $\mc{A}$, and measures the complexity of charactering $\mc{A}$ up to isomorphism.

In this paper, our goal is to, instead of characterising countable structures up to isomorphism, characterize certain topological spaces up to homeomorphism.\footnote{We also note that there is an analogous program for characterizing metric spaces (or indeed, metric structures) up to isometry, by considering $\{ Y : Y \cong_{isometric} X\}$. While we are not aware of any work on this problem specifically, there is existing work along these lines, e.g., \cite{MN13,Lp,Lp2}. The isometry classes are known to be Borel by \cite {MetricScott} or \cite{EFPRTT}.} We restrict our attention to Polish spaces, that is, separable completely metrizable spaces. In this case, there is no syntactic perspective, and so given a topological space $X$ we study the Wadge degree of the homeomorphic copies of $X$,
\[ \HCopies(X) = \{ Y : Y \cong_{\homeo} X\}.\]
We call this the \textit{(topological) Scott complexity} of $X$.  The Scott complexity of $X$ measures the difficulty of characterizing $X$ up to homeomorphism.

There are some options for how we choose to represent our topological spaces, that is, which Polish space we take $\HCopies(X)$ to live in. In descriptive set theory it is common to think of compact spaces as living as closed subsets of the Hilbert cube $[0,1]^{\mathbb{N}}$, which is universal in the sense that any compact Polish space is a closed subspace of the Hilbert cube. We can equip the closed subspaces of the Hilbert cube with the Vietoris topology (metrized by the Hausdorff metric), making it a Polish space. This was essentially the approach taken by Camerlo, Darji, and Marcone in \cite{Camerlo} where they considered the question of the Wadge degree of $\HCopies(X)$ for certain continua $X$ within the Polish space of subcontinua of the Hilbert cube.

Here we take a different approach which has been more popular recently in computability theory, e.g., in the recent book \cite{downeymelnikov}. We think of our topological spaces as being represented as the completion of a countable metric space, just as the real numbers are the completion of $\mathbb{Q}$. The space of all such presentation is itself a Polish space, and we measure the complexity of $\HCopies(X)$ within that space. See Section \ref{sec:two} for details. Morally speaking, the difference is that we have less direct access to information about our space. In the Vietoris topology, the spaces are \textit{effectively compact}, i.e., we have access to, for each $\epsilon$, a finite covering of the space by $\epsilon$-balls. In the setting in this paper we do not have direct access to such covers, which makes the arguments more intricate. After stating our results, we discuss the effect this has.

For a countable structure $\mc{A}$, $\Copies(\mc{A})$ is always Borel. This is not the case for $\HCopies(X)$ if $X$ is not compact; indeed, $\HCopies(\mathbb{R})$ is not Borel, as shown later in this paper. However, if $X$ is compact, $\HCopies(X)$ is Borel; indeed, the homeomorphism equivalence relation is bi-reducible to a complete orbit equivalence relation of a Borel action of a Polish group \cite{ZIELINSKI2016635}. Another perspective, which we do not use in this paper, is to use the effective Gelfand duality \cite{Gelfand} to reduce the problem to isometric isomorphism of unital abelian $C^*$-algebras. These are metric structures and thus subject to the metric Scott analysis \cite {MetricScott}, but it is not yet clear how to fruitfully pull back the metric Scott analysis across the Gelfand duality.

In the case of discrete structures, there are many examples for which the Scott complexity has been computed. For example, the complexity of the groups free groups \cite{CHKLMMMQW,McCoyWallbaum}: $\mathbb{F}_n$ has complexity \textbf{d-}$\bfSigma^0_2$ for $n$ finite and $\mathbb{F}_\infty$ has complexity $\bfPi^0_4$. Moreover, we know \cite{AGHTT} that the only possible Scott complexities are $\bfSigma^0_\alpha$, $\bfPi^0_\alpha$, and \textbf{d-}$\bfSigma^0_\alpha$.

In the context of topological spaces not only do we not yet know any general results about the topological Scott complexity. In this paper, we study some of the first 1-dimensional topological spaces to come to mind: the compact line $[0,1]$, the circle $S^1$, and the real line $\mathbb{R}$. We exactly characterize their topological Scott complexity. For these spaces, there are classical characterizations coming from continuum theory, e.g., $[0,1]$ is the unique continuum with exactly two non-cut points. Our setting allows us to analyse such characterizations and determine whether they are as simple as possible. We show that, appropriately expressed,\footnote{For example, to express the notion of a cut point it is helpful to first be in a locally connected space.} they are the simplest possible characterizations.

\begin{theorem}\label{thm:main-arc}
    The unit interval $[0,1]$ has topological Scott complexity $\bfPi^0_4$, i.e., the set
    \[ \HCopies([0,1]) = \{ Y : Y \cong_{\homeo} [0,1] \}\] is $\bfPi^0_4$-Wadge-complete.
\end{theorem}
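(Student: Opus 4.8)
The plan is to prove the two halves separately: that $\HCopies([0,1])$ lies in $\bfPi^0_4$, and that it is $\bfPi^0_4$-hard. For the upper bound I would start from the classical continuum-theoretic characterization that a space is homeomorphic to $[0,1]$ exactly when it is a nondegenerate metric continuum with exactly two non-cut points, and express membership as the conjunction of $\nondegen$, $\compact$, $\conn$, $\localconn$, and the assertion (phrased via $\btw$/$\nocut$) that there are exactly two non-cut points. Working with a presentation as the completion of a countable dense set $\{x_n\}$ together with its rational distance matrix, I would bound each conjunct: $\compact$ as total boundedness is around $\bfPi^0_3$, and $\conn$, $\localconn$ are expressible over the dense set and a countable basis, so that the non-cut condition is the conjunct reaching $\bfPi^0_4$ and dominating the rest. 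A useful point here is that $\localconn$ is in fact \emph{implied} by the other conditions (such a continuum is already an arc), so including it as a redundant conjunct does not change the set, but it is exactly what permits a bounded-complexity expression of the cut-point relation.

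The genuine subtlety in the upper bound is that "non-cut point" and "exactly two of them" naively quantify over all points of the completion, i.e. over reals, which is a priori only analytic. I would eliminate this by using a countable basis of connected open sets (available from $\localconn$ and separability): betweenness, and the separation conditions defining cut points, then become combinatorial assertions about basic open sets and the dense points, with compactness bounding the relevant searches, so everything stays arithmetical. Verifying that this basis-level reformulation is genuinely equivalent to the point-level topological statement, and that its complexity is precisely $\bfPi^0_4$ rather than higher, is the bulk of this direction.

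For hardness I would fix a $\bfPi^0_4$-complete set in its layered form $C = \{x : \forall i\; (x)_i \in S_3\}$, where $S_3$ is a fixed $\bfSigma^0_3$-complete set, and build a continuous map $x \mapsto Y_x$ into the presentation space with $Y_x \cong_{\homeo} [0,1]$ iff $x \in C$. The template is a "hairy arc": a fixed spine arc with a sequence of potential hairs attached at interior points accumulating at an endpoint, the $i$-th hair coding the test $(x)_i \in S_3$. The mechanism is metric collapse — each hair is a sequence of points whose distances to its spine base are scheduled from $x$ so that either these distances tend to $0$, in which case the hair collapses to a single point in the completion and leaves no defect, or they stay bounded below, in which case the hair survives as a genuine protruding arc and creates a third non-cut point, destroying arc-ness. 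To reach the fourth level I would make "the $i$-th hair collapses" realize the $\bfSigma^0_3$ test $(x)_i \in S_3$ rather than a bare limit: the hair is itself assembled from sub-hairs whose own collapse/survival encodes the inner quantifiers, so that survival of the whole hair is a $\bfPi^0_3$ event and its collapse a $\bfSigma^0_3$ event, whence the outer $\forall i$ yields $\forall i\,[(x)_i \in S_3]$. In the positive case every hair and sub-hair collapses, the completion is exactly the spine, and $Y_x$ is an arc; in the negative case some hair survives, giving an extra endpoint (and even if instead the defect breaks compactness or local connectedness, $Y_x$ is still a valid Polish presentation and still not $\cong [0,1]$). Continuity of $x \mapsto Y_x$ is immediate, since each finite fragment of the distance matrix depends on only finitely much of $x$.

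The main obstacle is this hardness construction, specifically engineering the nested collapse schedules so that the homeomorphism type of $Y_x$ flips exactly at the $\bfPi^0_4$ boundary. One must recurse on quantifier depth so that each layer of sub-hairs contributes exactly one alternation; ensure persistent defects are always attached at interior spine points, since an extra arc at an endpoint would merely lengthen the arc and create no defect; and verify in the positive case that the collapse is total, so that the completion is literally $[0,1]$ rather than merely arc-like. Matching these geometric limits precisely to $\bfSigma^0_3$ and $\bfPi^0_3$, and confirming the clean dichotomy between "arc" and "provably not an arc," is where essentially all of the work lies.
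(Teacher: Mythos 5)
Your upper-bound plan is broadly the paper's: one writes $\HCopies([0,1])$ as the conjunction of \nondegen, \compact, \conn, \localconn, and an order condition, with the cut-point structure replaced by a betweenness relation $\btw(x,y,z)$ on special points (quantified via $\epsilon$-paths) whose totality, together with local connectedness, forces uniqueness of arcs and hence at most two non-cut points. Two corrections, though. First, your complexity accounting is backwards: in the paper the ordering/betweenness condition \ordered is only $\Pi^0_3$, and it is \localconn that is the unique $\Pi^0_4$ conjunct, so \localconn is not a redundant add-on whose cost is dominated by the cut-point condition --- it \emph{is} the dominant cost (this is exactly why the paper can also show $\HCopies([0,1])$ is $\Pi^0_3$ within the compact connected locally connected spaces). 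Second, the step you defer (``verifying that this basis-level reformulation is genuinely equivalent'') is where the real work of this direction lies; the paper does it with $\epsilon$-paths rather than a basis of connected open sets, and the equivalence arguments (e.g.\ that \localconn plus compactness gives weak local connectedness, and that totality of $\btw$ on \emph{special} points controls all points) are not routine.

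The genuine gap is in the hardness direction. Your ``metric collapse'' mechanism --- a hair whose points' distances to the spine base tend to $0$, so that ``the hair collapses to a single point in the completion and leaves no defect'' --- is not implementable in the presentation space. A presentation commits, for each pair of special points, to a fast Cauchy sequence for their distance, so after $i$ stages every point already placed is pinned down to within $2^{-i}$; you can never retroactively move or delete a point placed at definite positive distance from the spine, and such a point remains in the completion forever. Whether a hair should survive is a $\Pi^0_3$ event, not decided at any finite stage, so you cannot defer placing its points until the answer is known; and once points sit off the spine they are a permanent defect no matter what distances you assign later (a sequence of off-spine points accumulating at the base is still not contained in the spine, so the completion is not that arc). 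This is precisely the obstruction the paper's construction is organized around: a ``collapsed'' tendril is not shrunk but \emph{absorbed} --- the main line is re-routed by piecewise-linear interpolation through all already-committed points so that they become part of the intended arc, a fresh and smaller tendril is then started, and one must verify that the resulting sawtooth of re-routings converges uniformly to a continuous limit curve (otherwise even the positive outcome fails to be an arc). The negative outcome is then failure of local connectedness from infinitely many surviving tendrils accumulating at an endpoint, not an extra non-cut point. Without this absorption-and-convergence idea your $\bfSigma^0_3/\bfPi^0_3$ gadget cannot be realized, and the outer $\forall i$ concatenation (which otherwise matches the paper's) has nothing to concatenate.
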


\begin{theorem}\label{thm:main-circle}
    The circle $S^1$ has topological Scott complexity $\bfPi^0_4$, i.e., the set
    \[ \HCopies(S^1) = \{ Y : Y \cong_{\homeo} S^1 \}\] is $\bfPi^0_4$-Wadge-complete.
\end{theorem}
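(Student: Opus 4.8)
The plan is to establish the two directions of Wadge-completeness separately: membership of $\HCopies(S^1)$ in $\bfPi^0_4$ (the upper bound), and $\bfPi^0_4$-hardness (the lower bound). For the upper bound, I would look for a characterization of $S^1$ among Polish spaces whose clauses can be arranged into a $\bfPi^0_4$ formula. The natural candidate from continuum theory is: $S^1$ is the unique continuum (compact, connected, metrizable) that is locally connected, has no cut points, and such that removing any two points disconnects it. The strategy is to verify that each predicate — compactness, connectedness, local connectedness, the no-cut-point condition, and the ``every pair of points separates'' condition — is expressible at the appropriate level of the Borel hierarchy in the coding of Polish spaces described in Section~\ref{sec:two}, and that their Boolean combination lands in $\bfPi^0_4$. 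Crucially, I expect to be able to borrow almost all of this analysis from the treatment of $[0,1]$ in Theorem~\ref{thm:main-arc}: the predicates \compact, \conn, \localconn, and \nocut should already be assigned complexities there, and the only genuinely new ingredient is the \twodisconn (``removing two points disconnects'') clause that distinguishes the circle from the arc.

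For the hardness direction, the goal is to produce a continuous reduction from a known $\bfPi^0_4$-complete set into $\HCopies(S^1)$. Here I would reuse the hardness construction for $[0,1]$ as a black box if possible. The cleanest approach is: given the reduction $x \mapsto Y_x$ witnessing $\bfPi^0_4$-hardness of $\HCopies([0,1])$, modify it so that $Y_x \cong_{\homeo} [0,1]$ precisely when the corresponding output should be a circle. Concretely, one takes two copies of the arc-producing gadget and glues their endpoints together, so that when $x$ is in the target set both arcs are genuine intervals and the glued result is a circle, while if $x$ is not in the target set at least one of the two pieces degenerates or grows a defect, destroying the homeomorphism type. The continuity of the gluing operation on the space of presentations, together with the already-established properties of the arc gadget, should then transfer $\bfPi^0_4$-hardness from $[0,1]$ to $S^1$.

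The main obstacle I anticipate is on the upper-bound side, in controlling the exact level at which the ``separation'' and ``no-cut-point'' conditions sit. These conditions quantify over points of the space and over the connected components of complements, and a naive formalization can easily cost an extra quantifier alternation, pushing the complexity above $\bfPi^0_4$. The delicate work is to express ``$Y \setminus \{p, q\}$ is disconnected'' using only the countable dense presentation — say, by asserting the existence of a separating pair of rational open sets with suitable closure/covering relations — in a way that keeps the arithmetical complexity pinned at the right level. I expect the paper to have set up, in the $[0,1]$ argument, a reservoir of lemmas computing the complexity of exactly these local-connectedness and cut-point predicates, so the real content here is verifying that the two-point separation clause does not exceed the complexity already incurred by the clauses shared with the arc.

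Assuming those complexity computations go through, the two directions combine to give $\bfPi^0_4$-Wadge-completeness: the characterization places $\HCopies(S^1)$ inside $\bfPi^0_4$, and the gluing reduction shows it is $\bfPi^0_4$-hard, hence $\bfPi^0_4$-complete.
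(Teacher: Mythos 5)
Your hardness direction is essentially the paper's: the paper takes the arc gadget $Y_U$ from the $\bfPi^0_4$-hardness proof for $[0,1]$, notes (Remark \ref{rem:startend}) that $Y_U$ always contains two designated points which are the endpoints in the good outcome, and closes it up into a circle by attaching a genuine arc between those two points (your variant with two copies of the gadget would also work, though you should note explicitly that the gadget must carry designated ``endpoints'' present in \emph{every} outcome so that the gluing is uniformly continuous in the input). The upper bound, however, is where your proposal has a genuine gap. You propose to formalize the classical characterization ``nondegenerate continuum, locally connected, no cut points, and disconnected by the removal of any two points,'' and you plan to borrow a \nocut predicate with an already-computed complexity from the $[0,1]$ argument. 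No such predicate exists there: the paper's characterization of $[0,1]$ deliberately avoids any clause quantifying over cut points (which need not be special points and hence cannot be quantified over arithmetically) and instead uses the totality of a betweenness relation $\btw$ on \emph{special} points (\ordered), deriving the cut-point structure afterwards via arcs. The ``delicate work'' you defer --- expressing ``$Y\setminus\{p,q\}$ is disconnected'' for arbitrary (non-special) $p,q$ at level $\bfPi^0_4$ --- is precisely the mathematical content of the upper bound, and the naive formalization does not obviously land at the right level: quantifying over arbitrary points is a function quantifier, and connectedness of the (non-compact) complement is not covered by the \conn analysis, which only works within compact spaces.

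The paper resolves this with a different characterization: a condition \twodisconn asserting that any \emph{four special points} admit a cyclic ordering $x_0,x_1,x_2,x_3$ such that (i) consecutive points are joined by $\epsilon$-paths avoiding neighbourhoods of the other two, and (ii) every $\epsilon$-path between opposite points passes near one of the remaining two. This is $\Pi^0_4$ by inspection, and the verification builds four pairwise-internally-disjoint arcs $x_0\to x_1\to x_2\to x_3\to x_0$ forming a copy $C$ of $S^1$, then uses \twodisconn again to show no point of $X$ lies off $C$. If you want to salvage your route, the fix is the same in spirit: restrict the separation clauses to special points and phrase them via $\epsilon$-paths (so that, e.g., ``$X\setminus\{p,q\}$ is disconnected'' becomes ``for small $\delta$ there is $\epsilon$ such that no $\epsilon$-path joins the two sides while avoiding $B_\delta(p)\cup B_\delta(q)$''), and then prove that the special-point conditions, together with \compact, \conn, and \localconn, already pin down $S^1$. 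As written, your proposal asserts that this step will go through without supplying the idea that makes it go through.
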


Camerlo, Darji, and Marcone \cite{Camerlo} also studied these spaces (and more), but as homeomorphism equivalence classes within the space of subcontinua of the Hilbert cube (with the subspace topology induced by the Vietoris topology on the Hilbert cube). They showed that the continua homeomorphic to $[0,1]$ (and to $S^1$) are $\bfPi^0_3$-Wadge-complete in this case. The difference between our results and theirs is that we are comparing our spaces to a larger class of space, and our presentations are weaker allowing for non-compact spaces. Given one of our presentations as the completion of a countable metric space, with one jump we can find finite $\epsilon$-covers of the space. We conjecture that one can obtain a jump-inversion result for non-trivial spaces, formally connecting our results and theirs.

\begin{conjecture}
    Let $X$ be a compact Polish space with uncountably-many points. Then:
    \begin{enumerate}
        \item If there is an $A'$-computable presentation of $X$ as a separable metric space which is $A'$-effectively compact (i.e., has $A'$-computable finite $\epsilon$-covers), then there is an $A$-computable presentation of $X$ as a separable metric space.
        \item If
    \[ \{ Y \text{ a closed subspace of Hilbert cube}  : Y \cong_{\homeo} X \}\]
    is $\bfPi^0_\alpha$-Wadge-complete, then
    \[ \{ Y \text{ a completion of countable metric space }  : Y \cong_{\homeo} X \}\]
     $\bfPi^0_{\alpha+1}$-Wadge-complete.
    \end{enumerate} 
\end{conjecture}

For the real line $\mathbb{R}$, because it is not compact, the set of homeomorphic copies is not necessarily Borel; in fact, we show that it is $\bfPi^1_1$-Wadge-complete. (We conjecture that this might always be the case for any locally compact non-compact space.) However we also show that the set of homeomorphic copies of $\mathbb{R}$ is the intersection of the set of locally compact spaces and a Borel set. This suggests that for working with non-compact locally compact spaces one should work within the locally compact spaces.

\begin{theorem}\label{thm:main-R}
    The set
    \[ \HCopies(\mathbb{R}) = \{ Y : Y \cong_{\homeo} \mathbb{R} \}\] is $\bfPi^1_1$-Wadge-complete, and $\bfPi^0_4$-Wadge-complete within the class of locally compact spaces.
\end{theorem}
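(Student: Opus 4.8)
The plan is to isolate a single non-Borel ingredient, local compactness, and show that everything else in the characterization of $\mathbb{R}$ sits at the same $\bfPi^0_4$ level already analyzed for $[0,1]$ and $S^1$. Classically, a nonempty separable metrizable space is homeomorphic to $\mathbb{R}$ exactly when it is connected, locally connected, locally compact, noncompact, and every point is a cut point separating it into exactly two components (equivalently, a boundaryless connected $1$-manifold other than $S^1$). I would record such a characterization, citing continuum theory and reusing the cut-point apparatus built for Theorems~\ref{thm:main-arc} and \ref{thm:main-circle}. The target of the first half is a Borel set $B$, at the $\bfPi^0_4$ level and encoding connectedness, local connectedness, and the ``every point separates into exactly two pieces'' condition, for which $\HCopies(\mathbb{R}) = \{\text{locally compact}\} \cap B$. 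The technical point here is that counting the components of $Y\setminus\{p\}$ is in general a $\bfPi^1_1$ matter (a separating/connectedness quantifier over arbitrary partitions), but once local compactness \emph{and} local connectedness are assumed one can detect and count components arithmetically, so under that hypothesis the two-component condition has a $\bfPi^0_4$ reformulation; outside the locally compact spaces $B$ may contain junk, which is harmless.

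Both upper bounds then fall out of this decomposition. Local compactness says ``every point has a neighborhood with compact closure'', i.e. $\forall x\,\exists \epsilon\,\forall\delta\,\exists$ a finite $\delta$-net of $\bar B(x,\epsilon)$; since closed balls are complete, compactness reduces to total boundedness, so the matrix is arithmetical and the sole second-order quantifier is the universal quantifier over points $x$ of the completion, making the condition $\bfPi^1_1$. Intersecting with the Borel set $B$ keeps $\HCopies(\mathbb{R})$ within $\bfPi^1_1$. Restricting to the locally compact spaces, the intersection with $B$ leaves exactly $B$, which is $\bfPi^0_4$.

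For $\bfPi^0_4$-hardness within the locally compact spaces I would transfer the reduction witnessing $\bfPi^0_4$-hardness of $\HCopies([0,1])$ (or $\HCopies(S^1)$). Since $\mathbb{R}\cong_{\homeo}(0,1)$ is the interior of $[0,1]$, the idea is to take the spaces $Y_x$ produced by that reduction, which are $\cong_{\homeo}[0,1]$ precisely for $x$ in a $\bfPi^0_4$-complete set, and pass to their interiors (delete the endpoints). One must check that the construction can be arranged so that all outputs are locally compact and so that $Y_x\cong_{\homeo}[0,1]$ if and only if its interior is $\cong_{\homeo}\mathbb{R}$; granting this, the same map is a reduction into $\HCopies(\mathbb{R})$ within the locally compact class.

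The global $\bfPi^1_1$-hardness is the main obstacle. I would reduce from the $\bfPi^1_1$-complete set $\mathrm{WF}$ of well-founded trees, building a continuous (indeed computable) map $T\mapsto X_T$ with $X_T\cong_{\homeo}\mathbb{R}$ iff $T\in\mathrm{WF}$. The design encodes $T$ by arcs (spines) attached along a base line and accumulating toward a single distinguished point $p$, with the diameter of the piece coding a node $\sigma$ tending to $0$ as $|\sigma|\to\infty$. The intended behavior is that an infinite branch forces $p$ (or a limit endpoint) to have infinitely many local directions within arbitrarily small radius, yielding an infinite-branching, hence non-locally-compact, point and so $X_T\not\cong_{\homeo}\mathbb{R}$. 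The delicate part is the well-founded case: since any genuinely attached arc creates a degree-three cut point incompatible with being an interval, the coding structure must be arranged to vanish in the limit along every finite branch, so that a well-founded tree leaves an honest connected, locally connected, locally compact line with only two-component cut points, i.e. exactly $\mathbb{R}$. Threading these three requirements simultaneously, continuity of the reduction, genuine recovery of $\mathbb{R}$ when $T\in\mathrm{WF}$, and failure of local compactness when $T\notin\mathrm{WF}$, by tying the persistence and size of each gadget to the infinitude of the branch through it, is where the real work lies.
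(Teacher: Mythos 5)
Your high-level decomposition is the right one, but the piece that carries the whole theorem --- the $\bfPi^1_1$-hardness reduction --- does not work as designed, and you have in fact named the reason yourself without resolving it. You propose attaching arc gadgets along a base line and "tying the persistence and size of each gadget to the infinitude of the branch through it." Whether a branch through a node is infinite is itself a $\Sigma^1_1$ question about $T$; a continuous (let alone computable) reduction only ever sees finite information about $T$ at each stage, so it cannot size or retract a gadget according to that datum. If instead you size the gadget at $\sigma$ by some computable function of $\sigma$ and $T\res|\sigma|$, then any infinite well-founded tree already leaves infinitely many genuinely attached arcs of positive diameter, each producing a branch point, and $X_T\not\cong_{\homeo}\mathbb{R}$ even though $T\in\mathrm{WF}$. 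The paper escapes this by never attaching anything: it builds a single injective curve $p:\mathbb{R}\to c_0$ visiting points $\mathbf{x}_\sigma$ whose coordinates along a branch shrink like $1/|\sigma|$ exactly when the nodes lie in $T$. The image of $p$ is always a homeomorphic copy of $\mathbb{R}$; the only question is whether its closure picks up an extra limit point $\mathbf{y}_\pi$, which happens precisely when $T$ has an infinite path. The failure mode is then not non-local-compactness but the fact that $\mathbb{R}$ has no proper dense connected subspace. Some mechanism of this "limit point appears iff there is a path" type is what your sketch is missing.

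Two smaller gaps. First, for $\bfPi^0_4$-hardness within the locally compact spaces, "delete the endpoints of $Y_x$" is not an operation on presentations: removing two special points from $M$ does not change $\overline{M}$, since those points are limits of the remaining ones. The paper instead splices the $[0,1]$-gadget into a bounded subinterval of a standard copy of $\mathbb{R}$, so the gadget's endpoints are attached to rays going to infinity; this is the repair you want. Second, for the upper bound you rely on expressing "every point is a cut point with exactly two components" arithmetically under local compactness and local connectedness; this is exactly the kind of quantification over non-special points that forced the paper to invent the \btw{} relation for $[0,1]$, and you give no indication of how to carry it out for $\mathbb{R}$. The paper avoids it entirely: it computably forms the one-point compactification (via the metric $\hat d(x,y)=\min(d(x,y),h(x)+h(y))$ with $h(x)=1/(1+d(p,x))$) and observes that a locally compact space is homeomorphic to $\mathbb{R}$ iff it is non-compact ($\Sigma^0_3$) and its one-point compactification is homeomorphic to $S^1$ ($\Pi^0_4$ by the already-proved circle theorem). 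Your route is not impossible, but as written it defers the hardest expressibility work rather than doing it.
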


The topological spaces $[0,1]$, $S^1$, and $\mathbb{R}$ considered in this paper are the most basic 1-dimensional topological spaces. A natural next class of examples to consider are two-dimensional topological spaces such as $S^2$ and $\mathbb{R}^2$. In \cite{HTMelnikov}, Melnikov and the first author studied closed surfaces, and one of their results is that for $X$ a closed surface, $\HCopies(X)$ is always arithmetic. Thus $\HCopies(S^2)$ is arithmetic, but the upper bound obtained is around $\Sigma^0_{27}$ which is almost certainly not the best possible. There are no known non-trivial known lower bounds.

All of the results in this paper are effective, i.e., these sets are actually lightface $\Pi^0_4$ and the Wadge reductions are not just continuous but computable. In particular we could also state theorems for index sets, e.g., the index set \[\{ i \mid \text{the $i$th computable Polish space is homeomorphic to $[0,1]$}\}\] is $\Pi^0_4$ $m$-complete.

\section{Background}

\subsection{Presentations of separable metric spaces}\label{sec:two}

The traditional approach to studying the class of $\mathcal{L}$-structures in computable structure theory is to consider the space of \textit{representations} of $\mathcal{L}$-structures with domain $\mathbb{N}$. Such a representation can be thought of as an encoding of the structure in machine-readable form, and the space of representations forms a Polish space $\text{Mod}(\mathcal{L})$.

In this work, we want to consider representations of topological (and more specifically Polish) spaces. Since Polish spaces typically have uncountably many points, we cannot represent each point by a natural number. Turing~\cite{Turing:36,Turing:37} had already realized this long ago, and instead suggested representing a real number by a Cauchy sequence of rational numbers. Moreover, in order to perform any computations, we need to know how fast the Cauchy sequence converges.

\begin{definition}
    A Cauchy sequence $(x_i)_{i \in \mathbb{N}}$ in a metric space $(M,d)$ is \textit{fast} if $d(x_i,x_{i+1}) < 2^{-i}$.
\end{definition}

One thinks of the reals $\mathbb{R}$ presented as a completion of the rationals $\mathbb{Q}$, with each real number $r$ represented by the Cauchy sequences converging to $r$. We extend this to arbitrary Polish spaces, using any countable dense subset of the space in place of $\mathbb{Q}$.

\begin{definition}
    A \textit{presentation} of a Polish space $X$ is a countable set $M$ of points (identified with $\mathbb{N}$) and a metric $d$ such that $X$ is homeomorphic to the completion $\overline{M}$ of the metric space $(M,d)$. We think of representing the metric $d$ by giving, for every two elements $x,y$ of $M$, a fast Cauchy sequence converging to $d(x,y)$. Thus, a presentation of $X$ can be identified with an infinite binary string.
\end{definition}

Given a presentation $X = \overline{M}$ of a Polish space, we refer to the points of $M$ as \textit{special points}. We represent the non-special points of $X$ using fast Cauchy sequences from $M$. The presentation $\mathbb{R} = \overline{\mathbb{Q}}$ is the natural presentation of $\mathbb{R}$ using $\mathbb{Q}$ as the special points, but there are, of course, other possible presentations of $\mathbb{R}$.


When we talk about the topology on $X$, we will generally use the open balls centered on special points with rational radii. We call these the \textit{basic open balls} and denote them by $B_r(x)$. We note that the basic open balls might not even be connected, even when the space is, e.g., connected and locally path connected. We denote the basic closed balls by $\overline{B}_r(x)$, noting that this is different from the closure of the basic open ball.

We will frequently make use of \textit{$\epsilon$-paths}. An $\epsilon$-path from $x$ to $y$ is a sequence of special points $x = u_1,\ldots,u_n = y$ such that $d(u_i,u_{i+1}) < \epsilon$.  The beginning and ending points $x$ and $y$ need not be special points, but for convenience we require that all the points in between them must be special. The $\epsilon$-path is within a set $U$ if all of the points are within $U$. If $U$ is open, then there is an $\epsilon$-path (using special points) from $x$ to $y$ within $U$ if and only there is an $\epsilon$-path (not neccesarily consisting of special points) from $x$ to $y$ within $U$.

The space of all presentations of Polish spaces is a $\Pi^0_2$ subset of $2^{< \omega}$ (under the identification given above) and so is itself a Polish space in its own right. We call this space $\topspace$. Note that $\topspace$ is homeomorphic to Baire space $\omega^\omega$, and so as described in the following subsection we can thus talk about Wadge reducibility and the Wadge degrees of subsets of $\topspace$.

\subsection{Wadge degrees and reductions}

\begin{definition}[Wadge]
	Let $A$ and $B$ be subsets of Baire space $\omega^\omega \cong \topspace$. We say that $A$ is \emph{Wadge reducible} to $B$, and write $A \leq_W B$, if there is a continuous function $f$ with $A = f^{-1}[B]$, i.e.
	\[x \in A \Longleftrightarrow f(x) \in B .\]
\end{definition}

\noindent The equivalence classes under this pre-order are called the Wadge degrees. The Wadge hierarchy is the set of Wadge degrees under continuous reductions. With enough determinacy, the Wadge hierarchy is very well-behaved; it is well-founded (as proved by Martin and Monk) and almost totally ordered (in the sense that any anti-chain has size at most two) \cite{Wadge}.

Pointclasses such as $\bfSigma^0_\alpha$, $\bfPi^0_\alpha$, $\bfPi^1_\alpha$, and $\bfSigma^1_\alpha$ are all closed downwards in the Wadge degrees. For $\Gamma$ a pointclass, and $A$ a set, we say that $A$ is $\Gamma$-Wadge-complete (or simply $\Gamma$-complete) if it is $\Gamma$ and also $\Gamma$-hard, which means that for every $B \in \Gamma$, $B \leq_W A$.

The reader may be a computability theorist used to many-one reductions of index sets. All of our Wadge reductions will actually be computable reductions, and so the same completeness theorems hold for many-one reductions. But building a Wadge reduction is more general (e.g., there are computable structures such that the set of indices of computable copies is $\Sigma^0_3$, but the set of indices of $0'$-computable copies is d-$\Sigma^0_2$ relative to $0'$). In this paper every upper bound on the complexity of a set will be a lightface upper bound.

Sometimes the complexity of a particular set is dominated by one aspect. In computable structure theory, one such example is finitely generated groups. The class of all finitely generated groups is $\Sigma^0_3$, and there are particular finitely generated groups $G$ with $\Copies(G)$ being $\bfSigma^0_3$-complete \cite{HTHo}. On the other hand for any finitely generated group $G$ $\Copies(G)$ is the intersection of a $\Pi^0_3$ set and the set of all finitely generated groups. Thus the complexity of saying that $G$ is finitely generated ``drowns out'' the complexity of saying which finitely generated group it is. Another example from \cite{CHKLMMMQW,McCoyWallbaum} is that, with $F_\infty$ the free group on countably infinitely many generators, $\Copies(F_\infty)$ is $\bfPi^0_4$-complete, but $\bfPi^0_3$-complete within the class of free groups. The standard way to separate these is to measure the complexity of $\Copies(G)$ within the set of finitely generated groups/free groups, as follows.

\begin{definition}
	Let $A$ and $C$ be sets in $\omega^\omega$.
 \begin{enumerate}
     \item We say that $A$ is $\Gamma$ within $C$ if there is a set $A^* \in \Gamma$ with $A = A^* \cap C$.
     \item We say that $A$ is $\Gamma$-Wadge-complete (or simply $\Gamma$-complete) within $C$ if for every $B \in \Gamma$, there is a continuous function $f$ such that for all $x$, $f(x) \in C$, and $x \in B \Longleftrightarrow f(x) \in A$.
 \end{enumerate} 
\end{definition}

The main example of this phenomenon in our paper is in Theorem \ref{thm:main-R} where when characterizing $\mathbb{R}$ up to homeomorphism, the fact that it is locally compact (which is $\Pi^1_1$) drowns out everything else. We show that $\HCopies(\mathbb{R})$ is $\bfPi^0_4$-complete within the class of locally compact spaces.

The second example is that $\HCopies([0,1])$ is $\bfPi^0_4$-complete, but the most difficult part is in expressing local connectedness. In Theorem \ref{lem:01-within} we show that $\HCopies([0,1])$ is actually $\bfPi^0_3$-complete within the compact, connected, locally connected spaces.

\section{\texorpdfstring{A $\Pi^0_4$ characterization of $[0,1]$}{A Pi04 characterization of [0,1]}}\label{sec:three}

In this section we prove the first part of Theorem \ref{thm:main-arc}, that $\HCopies([0,1])$ is $\Pi^0_4$. We prove the second half of the theorem, that $\HCopies([0,1])$ is $\bfPi^0_4$-hard, in the next section.

We recall the classical characterization of $[0,1]$ as the unique continuum (non-empty compact connected metric space) with exactly two non-cut points \cite[Theorem 6.17]{Nadler92}. It is not obvious how to express the existence of non-cut points as we are limited to quantifying over special points. Instead, we will express an idea of ``betweenness.'' In the following subsections, we will express various properties of a topological space in a $\Pi^0_4$ (or often simpler) way, and then show that they characterize $[0,1]$.

\subsection{Compactness}

Following \cite{MN13}, compactness can be expressed in a $\Pi^0_3$ way in terms of total boundedness: We say that a countable metric space satisfies \compact if and only if for every $\epsilon > 0$, there is $n \in \mathbb{N}$ and $n$ special points points $x_1,\ldots,x_n$ such that for all special points $y$, $y$ is within $\epsilon$ of some $x_i$. We take a second here to note that $d(y,x_i) < \epsilon$ is a $\Sigma^0_1$ condition, because in a presentation we are only given a fast Cauchy sequence converging to $d(y,x_i)$.  To make \compact $\Pi^0_3$ we must instead interpret ``$y$ is within $\epsilon$ of some $x_i$'' as that ``$d(y,x_i) \leq \epsilon$'' which is a $\Pi^0_1$ condition. As a general principle, these types of conditions are really about \textit{sufficient smallness} and so asking for strict or non-strict inequalities are equivalent.

\subsection{Connectedness}

Connectedness for arbitrary spaces is not Borel, even in the case of finite dimensional Euclidean spaces, as shown in \cite{DebsSR}. However, connectedness in a compact space can be expressed much more simply.

We say that $X = \overline{(M,d)}$ satisfies $\conn$ if for every $\epsilon > 0$ and special points $x_1,\ldots,x_m$ and $y_1,\ldots,y_m$, if $U = \overline{B}_{\epsilon}(x_1) \cup \cdots \cup \overline{B}_{\epsilon}(x_m)$ and $V = \overline{B}_{\epsilon}(y_1) \cup \cdots \cup \overline{B}_{\epsilon}(y_m)$ cover the special points (i.e., for all special points $z$, there is some $x_i$ with $d(z,x_i) \leq \epsilon$ or some $y_i$ with $d(z,y_i) \leq \epsilon$) the sets $U^* = B_{2\epsilon}(x_1) \cup \cdots \cup B_{2\epsilon}(x_m)$ and $V^* = B_{2\epsilon}(y_1) \cup \cdots \cup B_{2\epsilon}(y_m)$ are not disjoint (i.e., there is a special point $z$ such that for some $x_i$ and $y_j$, $d(z,x_i) < 2\epsilon$ and $d(z,y_i) < 2\epsilon$). Note that $\conn$ is a $\Pi^0_2$ property.\footnote{It is easy to see that the set of connected spaces is also $\Pi^0_2$-complete within the compact spaces; one builds a copy of the space $[0,1-\frac{1}{n}] \cup [1,2]$, and lets $n \to \infty$ in the infinitary outcome.} We remark again that to make this $\Pi^0_2$ we were careful to use the closed condition in the antecedent of the implication and the open condition in the consequence. Going forward, we will no longer explicitly comment on such issues.

\begin{lemma}
    Let $X = \overline{(M,d)}$ be compact. Then $X$ is connected if and only if $(M,d)$ satisfies $\conn$.
\end{lemma}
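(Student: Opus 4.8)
The plan is to prove both directions of the equivalence, leveraging compactness of $X$ to pass between the combinatorial $\epsilon$-path/$\epsilon$-ball language and the genuine topological notion of connectedness. Let me sketch each direction.

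**Direction: $\conn$ implies $X$ connected.** I would prove the contrapositive. Suppose $X$ is disconnected, so $X = A \sqcup B$ with $A, B$ nonempty, open, and closed. Since $X$ is compact, $A$ and $B$ are compact, hence at positive distance: there is $\delta > 0$ with $d(a,b) \geq \delta$ for all $a \in A$, $b \in B$. Now choose $\epsilon$ small (say $\epsilon < \delta/4$) and use total boundedness to find finitely many special points $x_1,\dots,x_m$ whose $\epsilon$-balls cover $A$ and $y_1,\dots,y_m$ whose $\epsilon$-balls cover $B$ — concretely, take a finite $\epsilon$-net of all special points and sort each net point into the cover of $A$ or $B$ according to which piece it lies closest to. The closed $\epsilon$-balls then cover all special points. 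The key computation is that the doubled open balls $U^* = \bigcup B_{2\epsilon}(x_i)$ and $V^* = \bigcup B_{2\epsilon}(y_j)$ remain disjoint: any point in both would be within $2\epsilon$ of some $x_i \in A$ and within $2\epsilon$ of some $y_j \in B$, forcing $d(x_i,y_j) < 4\epsilon < \delta$, contradicting the separation. This exhibits a failure of $\conn$.

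**Direction: $X$ connected implies $\conn$.** Again I argue by contrapositive: suppose $\conn$ fails, witnessed by $\epsilon$, points $x_i$, $y_j$ whose closed $\epsilon$-balls cover the special points, and with $U^* = \bigcup B_{2\epsilon}(x_i)$ and $V^* = \bigcup B_{2\epsilon}(y_j)$ disjoint. Since $X$ is the completion of $M$, the special points are dense, so $U = \bigcup \overline{B}_\epsilon(x_i)$ and $V = \bigcup \overline{B}_\epsilon(y_j)$ cover all of $X$, not merely the special points (any point of $X$ is a limit of special points, each within $\epsilon$ of some center, and closed balls are closed). The doubled open sets $U^*, V^*$ are open, disjoint, and each contains the corresponding closed balls since $\overline{B}_\epsilon(x_i) \subseteq B_{2\epsilon}(x_i)$. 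Thus $U^*$ and $V^*$ are disjoint open sets covering $X$, both nonempty (assuming at least one $x_i$ and one $y_j$ exist), giving a disconnection of $X$.

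**The main obstacle** is handling the degenerate and boundary cases cleanly in the second direction: I must ensure both $U^*$ and $V^*$ are actually nonempty to conclude a genuine disconnection (if one family is empty the statement is vacuous or trivial), and I must be careful that the closed balls $\overline{B}_\epsilon(x_i)$ — which the excerpt notes differ from the closure of the open ball — behave correctly so that $U \cup V$ really covers $X$ and not just $M$. The density argument bridging "covers the special points" to "covers all of $X$" is where compactness and the open/closed inequality bookkeeping interact most delicately; the rest is routine. I expect the argument to go through once the separation constant $\delta$ is fixed via compactness of the clopen pieces, which is the one genuinely topological (non-combinatorial) input.
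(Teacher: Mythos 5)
Your proof is correct and follows essentially the same route as the paper: both directions are argued by contrapositive, using the positive distance between the compact clopen pieces in one direction and density of the special points (every point of $X$ is within $\epsilon$ of a special point, hence within $2\epsilon$ of a center) in the other. Your bookkeeping is if anything slightly more careful than the paper's --- you take $\epsilon < \delta/4$ so that the $4\epsilon$ triangle-inequality estimate cleanly contradicts the separation, and you explicitly note that $U^*$ and $V^*$ are nonempty because they contain the centers $x_1$ and $y_1$.
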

\begin{proof}
    We argue that a compact space $X$ is connected if and only if it satisfies $\conn$. Suppose that $X$ is not connected, with $X = U \sqcup V$ being a partition. Let $d$ be the distance between $U$ and $V$, and choose $\epsilon > 0$ such that $2 \epsilon < d$. Let $U = \overline{B}_{\epsilon}(x_1) \cup \cdots \cup \overline{B}_{\epsilon}(x_m)$ and $V = \overline{B}_{\epsilon}(y_1) \cup \cdots \cup \overline{B}_{\epsilon}(y_m)$ be $\epsilon$-covers of $U$ and $V$ by closed balls centered at special points. Note that $U^* = B_{2\epsilon}(x_1) \cup \cdots \cup B_{2\epsilon}(x_m)$ and $V^* = B_{2\epsilon}(y_1) \cup \cdots \cup B_{2\epsilon}(y_m)$ are still disjoint by choice of $\epsilon$. Thus we witness the negation of $\conn$.

    On the other hand, suppose that $X$ fails to satisfy $\conn$. Take a witness with $U^* = B_{2\epsilon}(x_1) \cup \cdots \cup B_{2\epsilon}(x_m)$ and $V^* = B_{2\epsilon}(y_1) \cup \cdots \cup B_{2\epsilon}(y_m)$. Then $U^*$ and $V^*$ are disjoint, since if they intersected then there would be a special point $z$ in their intersection, as they are open. They also cover $X$, because $U = \overline{B}_{\epsilon}(x_1) \cup \cdots \cup \overline{B}_{\epsilon}(x_m)$ and $V = \overline{B}_{\epsilon}(y_1) \cup \cdots \cup \overline{B}_{\epsilon}(y_m)$ cover the special points of $X$, and every point of $X$ is within $\epsilon$ of a special point of $X$. Thus $X = U^* \sqcup V^*$ witness that $X$ is not connected.
\end{proof}

\subsection{Local connectedness}

We expect that, in general, local connectedness might be quite hard to express (like local compactness, which is not Borel \cite{NiesSolecki}). However, we do not have to find a condition which is both necessary and sufficient for local connectedness, but just a condition equivalent to local connectedness within compact spaces. Our condition is as follows.
\begin{description}
    \item[\textnormal{\localconn:}] For all basic open sets $A = B_{r}(c)$ and $B = B_{2r}(c)$ there are special points $x_1,\ldots,x_n \in A$ and $\epsilon > 0$ such that:
    \begin{enumerate}
        \item for all special points $y \in A$, there is some $i$ such that there is an $\epsilon$-path from $y$ to $x_i$ in $B$.
        \item for distinct $i,j$ there is no $\epsilon$-path from $x_i$ to $x_j$ in $B$.
        \item for all $i$ and for all special points $y \in A$ and $\epsilon' > 0$ if there is an $\epsilon$-path from $y$ to $x_i$ in $B$ then there is an $\epsilon'$-path from $y$ to $x_i$ in $B$.
    \end{enumerate}
\end{description}

This is $\Pi^0_4$. First, we show that our condition \localconn implies local connectedness via weak local connectedness. We recall the difference between locally connected and weakly locally connected spaces. Let $X$ be a topological space. $X$ is locally connected if every point has a neighbourhood base consisting of open connected sets. $X$ is weakly locally connected
if every point has a neighbourhood base consisting of connected sets, which are not necessarily open. However these are equivalent; see, e.g., Theorem 27.16 of \cite{MR0264581}.

\begin{lemma}
    If $X = \overline{(M,d)}$ is compact and satisfies \localconn, then $X$ is weakly locally connected and hence locally connected.
\end{lemma}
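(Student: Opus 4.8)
The plan is to verify \emph{weak} local connectedness directly at each point and each basic neighbourhood, and then invoke the cited equivalence (Theorem 27.16 of \cite{MR0264581}) to upgrade to local connectedness. So fix a point $p \in X$ and an open set $W \ni p$; I want to produce a \emph{connected} set $C$ with $p \in \operatorname{int}(C) \subseteq C \subseteq W$. The key idea is to take $C$ to be the connected component of $p$ inside a suitable \emph{compact} set, and to use \localconn only to certify that this component is a neighbourhood of $p$.

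Concretely, choose a special point $c$ and a rational $r$ with $p \in B_r(c)$ and with the closed ball $K = \{ y : d(y,c) \le 2r\}$ contained in $W$ (possible for $c$ close to $p$ and $r$ small). Set $A = B_r(c)$ and $B = B_{2r}(c)$, so that $B \subseteq K \subseteq W$, and apply \localconn to the pair $(A,B)$ to obtain special points $x_1, \dots, x_n \in A$ and $\epsilon > 0$ satisfying (1)--(3). Let $C$ be the connected component of $p$ in $K$; being a component of a compact set it is connected, and $C \subseteq K \subseteq W$. It remains only to show $p \in \operatorname{int}(C)$.

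For this I would use the classical fact that in a compact metric space the connected component of a point coincides with its quasicomponent, which in turn equals the intersection over all $\delta > 0$ of the \emph{$\delta$-chain component} of the point (the set of points joined to it by a $\delta$-path within $K$); the relevant ingredients are that the $\delta$-chain components are finitely many clopen sets, pairwise at distance $\ge \delta$, so that every clopen set separating the point is a union of $\delta$-chain components for small $\delta$. Granting this, it suffices to exhibit an \emph{open} set $P \ni p$ with $P \subseteq C$. Take $P = \{ y \in A : \text{there is an } \epsilon\text{-path from } y \text{ to } x_1 \text{ in } B\}$, where $x_1$ is the representative to which $p$ is $\epsilon$-connected (such an $x_1$ exists by condition (1), after approximating $p$ by a nearby special point). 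A short perturbation argument shows $P$ is open and contains $p$. The heart of the matter is condition (3): it forces $P$ to be \emph{scale-stable}, i.e.\ for every $\delta \le \epsilon$ one has $P = \{ y \in A : \exists\, \delta\text{-path } y \to x_1 \text{ in } B\}$, so that every $y \in P$ is $\delta$-chain connected to $p$ \emph{through $B$} for every $\delta$. Since $B \subseteq K$, these are $\delta$-chains in $K$, so $y$ lies in every $\delta$-chain component of $p$ in $K$, whence $y \in C$. Thus $p \in P \subseteq C$ with $P$ open, giving $p \in \operatorname{int}(C)$.

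The main obstacle --- and the reason the definition uses the two concentric balls $A = B_r(c) \subseteq B = B_{2r}(c)$ --- is that the $\epsilon$-paths guaranteed by \localconn are only required to stay inside the \emph{larger} ball $B$, so one cannot expect the fine path components to localize inside $A$ itself. The device that sidesteps this is to measure connectedness inside a compact set $K$ that \emph{contains} all of $B$ yet still sits inside $W$: fine chains in $B$ are automatically fine chains in $K$, and the compact-space identity ``component $=$ intersection of $\delta$-chain components'' converts the scale-stable fine-chain-connectivity supplied by condition (3) into membership in a genuine connected component. I expect that verifying the scale-stability of $P$ from condition (3), including the bookkeeping needed to pass from the non-special point $p$ to nearby special points, will be the only genuinely delicate step; note that condition (2) is not needed for this direction.
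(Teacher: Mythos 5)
Your proof is correct, but it is organized genuinely differently from the paper's. The paper explicitly constructs the sets $D_i = \{ y \in \overline{B} : \text{for all } \epsilon' > 0 \text{ there is an } \epsilon'\text{-path from } y \text{ to } x_i \text{ in } B\}$ and verifies by hand that they are closed, connected, pairwise disjoint, relatively open in $A$, and cover $A$; the connected neighbourhood of the given point is then one of the $D_i$ themselves. You instead take the honest connected component $C$ of $p$ in the compact ball $K$, so connectedness is automatic, and reduce everything to showing that $C$ has interior at $p$ by quoting the classical identity (component $=$ quasicomponent $=$ intersection of the $\delta$-chain components) in a compact metric space, which you invoke in the correct direction. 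The two arguments use conditions (1) and (3) in exactly the same way: (1) to attach a special point near $p$ to some $x_i$, (3) for scale-stability, and the same special-point perturbation bookkeeping for openness --- the ``delicate step'' you flag is precisely the paper's fourth claim (that $D_i \cap A$ is open). Your remark that (2) is not needed is also right: the paper derives pairwise disjointness of the $D_i$ from (2) but never uses it in concluding the lemma. What your route buys is that you never have to prove that the chain-theoretic set $D_i$ is itself connected; that is the one step the paper treats tersely (its second claim asserts that a clopen splitting of $D_i$ at distance $\epsilon'$ rules out $\epsilon'/2$-paths in $B$ between the two halves, but such a path need not stay inside $D_i$, and repairing this is essentially the quasicomponent argument you cite). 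What the paper's route buys is self-containedness: it does not lean on the chain-component theorem as a black box.
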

\begin{proof}
    Let $x$ be any point, and $U$ an open set containing $x$. We can choose basic open balls $x \in B_r(c) \subseteq \overline{B_r(c)} \subseteq B_{2r}(c) \subseteq \overline{B_{2r}(c)} \subseteq U$. Let $A = B_r(c)$ and $B = B_{2r}(c)$. We will find a connected neighbourhood $D$ of $x$ contained in $B$. To witness that $D$ is a neighbourhood, we will have $D \cap A$ open.
    
    By \localconn, there are $x_1,\ldots,x_n \in A$ and $\epsilon$ satisfying (1), (2), and (3). Define
    \[ D_i = \{ y \in \overline{B} : \text{for all $\epsilon' > 0$ there is an $\epsilon'$-path from $y$ to $x_i$ in $B$} \}.\]
    Clearly $D_i \subseteq \overline{B} \subseteq U$. We prove the following five claims.

    \begin{claim}
        Each $D_i$ is closed.    
    \end{claim}
    \begin{proof}
        Given $y \in \overline{B}$, suppose that $y$ is a limit point of $D_i$. We argue that $y \in D_i$. Given $\epsilon' > 0$, there is some point $y'$ of $D_i$ with $d(y,y') < \epsilon'/2$. Then there is an $\epsilon'/2$-path from $y'$ to $x_i$ in $B$, which, by replacing $y'$ by $y$, is an $\epsilon'$-path from $y$ to $x_i$.
    \end{proof}

    \begin{claim}
        Each $D_i$ is connected.    
    \end{claim}
    \begin{proof}
        Suppose that $D_i$ is not connected. Since $D_i$ is closed and compact, there are disjoint relatively clopen $U,V \subseteq D_i$ which partition $D_i$ and points $u \in U$ and $v \in V$. There is some distance $\epsilon'$ between $U$ and $V$. Then there is no $\epsilon'/2$-path from $u$ to $v$ in $B$.
    \end{proof}

    \begin{claim}
        The $D_i$ are pairwise disjoint.
    \end{claim}
    \begin{proof}
        Suppose that there is some $y \in D_i \cap D_j$. Then there is an $\epsilon/2$-path from $y$ to $x_i$ and an $\epsilon/2$-path from $y$ to $x_j$. Putting them together, and deleting $y$, we get an $\epsilon$-path from $x_i$ to $x_j$. This contradicts (2).
    \end{proof}

    \begin{claim}
        Each $D_i \cap A$ is open. Indeed, given $y \in D_i \cap A$ and $z \in A$ with $d(y,z) < \epsilon / 2$, $z \in D_i$.
    \end{claim}
    \begin{proof}
        Choose some $\epsilon'$. We wish to find an $\epsilon'$-path from $z$ to $x_i$ in $B$, which would place $z \in D_i$. Since $y \in D_i$, we have an $\epsilon$-path from $y$ to $x_i$ in $B$. Since the special points are dense, we may choose a path among them, other than the $y$, which may be non-special. Call the path-elements $a_i$, where $a_1 = y$ and $a_n = x_i$. We have that $y$ is in $B_{\epsilon / 2}(z)$ and $B_\epsilon(a_2)$, and since both of these are open, there is some ball around $y$ in both of these as well. Choose in particular a special $y' \in B$ within these two and also within $\epsilon'$ of $y$. Choose also a $z' \in A$ with $d(y,z') < \epsilon / 2$ and $d(z,z') < \epsilon'$. Then we have an $\epsilon$-path $z',y',a_2,\ldots,a_n=x_i$ among the special points, which by (3) gives us an $\epsilon'$-path from $z'$ to $x_i$ in $B$, which by our choice of $z'$ gives us an $\epsilon'$-path from $z$ to $x_i$ in $B$. Thus $z \in D_i$.
        
        Together with the fact that $A$ is open, this shows that $D_i \cap A$ is open.
    \end{proof}

    \begin{claim}
        The $D_i$ cover $A$.
    \end{claim}
    \begin{proof}
        For any $y \in A$, choose some special $y'$ with $d(y,y') < \epsilon / 2$. By the ``Indeed" statement in the above claim, we have that $y' \in D_i \Rightarrow y \in D_i$ for any $i$. Since $y'$ is special, (1) and (3) give us that $y \in D_i$ for some $i$.
    \end{proof}

    Now since the $D_i$ cover $A$, $x \in D_i$ for some $i$. Then $D_i$ is a connected neighbourhood of $x$ contained in $U$. This completes the proof of the lemma.
\end{proof}

Second, we will show that any compact and locally connected space satisfies the condition \localconn.

\begin{lemma}

    If $X = \overline{(M,d)}$ is compact and locally connected then it satisfies \localconn.
\end{lemma}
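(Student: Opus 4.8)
The plan is to produce the witnesses $x_1,\dots,x_n$ demanded by \localconn as representatives of the connected components of the \emph{closed} ball $\overline{B}$ (the closure of $B = B_{2r}(c)$) that meet $A = B_r(c)$, and then to choose a single small $\epsilon$ for which $\epsilon$-paths in $B$ correctly detect these components on $A$. The reason for passing to $\overline B$ rather than working with $B$ itself is precisely the main obstacle I anticipate: two distinct components of the open set $B$ can have closures that touch at a boundary point $w$ with $d(w,c)=2r$, and then for every $\epsilon>0$ one can run an $\epsilon$-path that crosses between them through points just inside $B$ near $w$. Such a pair of ``components'' is genuinely inseparable by $\epsilon$-paths and must be lumped into a single witness $x_i$; the components of $\overline B$ do exactly this lumping.

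The two ingredients I would invoke are standard facts about compact metric spaces. First (Fact A): for a compact metric space $K$ and $p,q \in K$, the points $p$ and $q$ lie in the same connected component of $K$ if and only if for every $\epsilon>0$ there is an $\epsilon$-chain from $p$ to $q$ in $K$ (a finite sequence in $K$ with consecutive distances $<\epsilon$); this is the coincidence of components and quasicomponents in a compact space together with the chain characterization of quasicomponents. Second (Fact B): if closed sets $F_k \subseteq K$ are each joined by $\delta_k$-chains between any two of their points with $\delta_k \to 0$, and $F_k \to L$ in the Hausdorff metric, then $L$ is connected. Taking $K = \overline B$, an $\epsilon$-path in $B$ is in particular an $\epsilon$-chain in $\overline B$; conversely, since the special points are dense and $B$ is open, any $\epsilon$-chain in $\overline B$ between points of $B$ can be thickened to a special-point $\epsilon$-path in $B$ (nudge each interior chain point to a nearby special point of $B$, as in the $\epsilon$-path/$\epsilon$-chain equivalence for open sets recalled in Section~\ref{sec:two}). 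Hence, for $p,q\in B$, the existence of an $\epsilon$-path from $p$ to $q$ in $B$ for \emph{every} $\epsilon$ is equivalent to $p$ and $q$ lying in the same component of $\overline B$.

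Local connectedness enters only to guarantee finiteness. Since $X$ is locally connected and $B$ is open, the components of $B$ are open; being pairwise disjoint and covering the compact set $\overline A \subseteq B$, only finitely many of them meet $\overline A$, hence only finitely many meet $A$. Each component $H$ of $\overline B$ that meets $A$ contains a point $p\in A\subseteq B$ and therefore contains the open component of $B$ through $p$, which meets $A$; distinct $H$'s contain distinct components of $B$, so there are only finitely many components $H_1,\dots,H_n$ of $\overline B$ meeting $A$. These are closed, pairwise disjoint, and cover $A$. For each $i$ I would pick $p\in H_i\cap A$, take the open component of $B$ through $p$ (which meets the open set $A$), and select a special point $x_i\in H_i\cap A$ inside it.

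Finally I would fix $\epsilon$ via the claim $(\star)$: there is $\epsilon_0>0$ such that for every special $y\in A$ and every $i$, an $\epsilon_0$-path from $y$ to $x_i$ in $B$ forces $y\in H_i$. Granting $(\star)$ with $\epsilon=\epsilon_0$: condition (1) holds for every $\epsilon$, since each special $y\in A$ lies in some $H_i$ and is then $\epsilon$-path-connected to $x_i$ in $B$ by the equivalence above; condition (2) follows by taking $y=x_j$, as $x_j\in H_j$ with $j\ne i$ gives $x_j\notin H_i$, hence no $\epsilon_0$-path to $x_i$; and condition (3) follows because $y\in H_i\ni x_i$ places $y$ and $x_i$ in the same component of $\overline B$, so by Fact A there is an $\epsilon'$-path in $B$ for every $\epsilon'$. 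To prove $(\star)$ I would argue by contradiction with a Hausdorff-limit continuum: if it fails, then for each $k$ there are a special $y_k\in A$ and an index $i_k$ with a $(1/k)$-path $P_k$ from $y_k$ to $x_{i_k}$ in $B$ but $y_k\notin H_{i_k}$; passing to subsequences I fix $i_k=i$ and (using that the $H$'s cover $A$ and $y_k\notin H_i$) fix $y_k\in H_j$ with $j\ne i$. The closures $\overline{P_k}$ are compact $(1/k)$-chain-connected sets in $\overline B$ containing $x_i$ and $y_k$; passing to a Hausdorff-convergent subsequence $\overline{P_k}\to L\subseteq\overline B$, Fact B makes $L$ a continuum containing $x_i$ and $y_*:=\lim_k y_k$. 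As $L$ is connected and contains $x_i\in H_i$, we get $L\subseteq H_i$, so $y_*\in H_i$; but $y_k\in H_j$ with $H_j$ closed gives $y_*\in H_j$, contradicting $H_i\cap H_j=\varnothing$. I expect this Hausdorff-limit step, which converts the possibility of arbitrarily fine cross-component paths into an actual connected subset of $\overline B$, to be the crux of the argument.
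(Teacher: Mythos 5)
Your proof is correct, and at its core it produces the same partition as the paper's: both arguments group the points of $A$ into finitely many classes that are mutually joinable by arbitrarily fine paths in $B$, and take one special representative per class. The paper builds this partition by putting an ad hoc equivalence relation on the open components of $B$ ($C\sim C'$ iff every pair of points is joined by $\epsilon'$-paths in $B$ for all $\epsilon'$) and letting the $D_i$ be unions of equivalence classes; you obtain the same classes as the connected components of the compact set $\overline{B}$, via the classical coincidence of components, quasicomponents, and $\epsilon$-chain classes in a compact metric space. The genuine divergence is in how the single separating $\epsilon$ is justified. The paper simply ``chooses $\epsilon_{i,j}>0$ such that there is no $\epsilon_{i,j}$-path from any point of $D_i$ to any point of $D_j$'' and takes the minimum; the existence of such an $\epsilon_{i,j}$, uniformly over all pairs of points and over the possibly infinitely many components of $B$ comprising each class, is exactly the step that needs an argument, and the paper leaves it implicit. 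Your claim $(\star)$ and its proof by Hausdorff limits (arbitrarily fine cross-class paths accumulate on a continuum in $\overline{B}$ joining two distinct components, which is absurd) supplies precisely that compactness argument. So your route costs some extra machinery --- hyperspace compactness and the fact that Hausdorff limits of $\delta_k$-chained sets with $\delta_k\to 0$ are connected --- but it buys a fully rigorous uniformity step. The only detail worth making explicit is the bookkeeping in the chain-to-path conversion: nudging the points of an $\epsilon$-chain in $\overline{B}$ onto special points of $B$ yields an $(\epsilon+2\delta)$-path, so one should start from an $(\epsilon/2)$-chain; since Fact~A provides chains for every $\epsilon$, this is harmless, but it deserves a sentence.
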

\begin{proof}
    Firstly, take basic open sets with $A = B_{r}(c)$ and $B = B_{2r}(c)$ for some $c$ and $r$. We want to show that there is a finite set of $x_i$ together with an $\epsilon$ that satisfy (1)-(3) of LC.

    Let $C_1,C_2,\ldots$ be the connected components of $B$, which are open sets because $B$ is locally connected.
    Define an equivalence relation $\sim$ on the $C_i$ saying that $C_i \sim C_j$ if for every $\epsilon' > 0$ and $x \in C_i$ and $y \in C_j$, there is an $\epsilon'$-path from $x$ to $y$ in $B$. (Note that this is true of any two points $x,y$ in the same $C_i$.) For each equivalence class, define an open set $D$ which is the union of the $C_i$ in that equivalence class. Let $D_1,D_2,\ldots$ be these sets. Then:

    \begin{enumerate}
        \item For each $i$, $\overline{D_i}$ is connected. If not, then there is a clopen splitting of $\overline{D}_i$ with points $x$ and $y$ on different halves. Though the sets in this splitting are only clopen within $\overline{D}_i$, they are closed in $X$. Since there is some distance between these two sets, there is some $\epsilon'$ smaller than this distance such that there is no $\epsilon'$-path from $x$ to $y$ in $B$. 

        \item Each pair of distinct $\overline{D_i}$ and $\overline{D_j}$ are disjoint. Otherwise, they would have shared a limit point and would then have been of a distance less than any $\epsilon$, contradicting that they come from different equivalence classes.

        \item The $D_i$ are an open cover of $B$, and hence of the compact set $\overline{A}$. So there are finitely many of them, say $D_1,\ldots,D_n$, which cover $\overline{A}$.
    \end{enumerate}

    For each $i,j$, choose $\epsilon_{i,j} > 0$ such that there is no $\epsilon_{i,j}$-path from any point $x \in D_i$ to any point $y \in D_j$. Let $\epsilon = \min_{i,j} \epsilon_{i,j}$, and choose a special point $x_i$ in each $D_i$.

    \begin{claim}
        $\epsilon$ and $x_1,\ldots,x_n$ satisfy (1).
    \end{claim}
    \begin{proof}
        Since the $D_i$ cover $\overline{A}$, they cover $A$, and so since $y \in A$, $y \in D_i$ for some $i$. By the construction of the $D_i$ and since $x_i \in D_i$, this means that there is an $\epsilon'$-path from $y$ to $x_i$ for all $\epsilon' > 0$, including $\epsilon$.
    \end{proof}

    \begin{claim}
        $\epsilon$ and $x_1,\ldots,x_n$ satisfy (2).
    \end{claim}
    \begin{proof}
        Suppose that the converse of (2) was true. Then for some $i\neq j$, there would be an $\epsilon$-path from $x_i$ to $x_j$ in $B$. But this contradicts the choice of $\epsilon$ in the construction above.
    \end{proof}

    \begin{claim}
        $\epsilon$ and $x_1,\ldots,x_n$ satisfy (3).
    \end{claim}
    \begin{proof}
        This follows from the construction of the $D_i$ as the union of connected components which have $\epsilon$-paths between any two elements for all $\epsilon$. By the choice of $\epsilon$ in the above construction, $y$ having an $\epsilon$-path to some $x_i$ places $y$ in $D_i$, which then, for any $\epsilon'$, has an $\epsilon'$-path to any other point within $D_i$, including $x_i$.
    \end{proof}

    The claims prove the lemma.
\end{proof}

If $X$ is locally connected, compact, and connected, then it is path-connected and locally path-connected \cite[6.3.11]{MR1039321}.

\begin{corollary}
    If $X = \overline{(M,d)}$ satisfies \compact, \conn, and \localconn then it is path-connected and locally path-connected.
\end{corollary}

\subsection{Betweenness}

In a standard proof of the classification of $[0,1]$ as the unique continuum with exactly two non-cut points, the separation ordering plays an important role. We will deal not with the ordering, but with the corresponding notion of betweenness where one point is between two others if removing it separates the two points.

\begin{definition}
    Define a ternary betweenness relation $\btw(x,y,z)$ by setting $\btw(x,y,z)$ if $x,y,z$ are all distinct and for all $\delta > 0$, there are $\delta',\epsilon < \delta$ such that for all $\epsilon$-paths $x = a_1,\ldots,a_n = z$ from $x$ to $z$, there is some $i$ such that $d(y,a_i) \leq \delta'$.
\end{definition}

If $\btw(x,y,z)$ holds, then we say that \textit{$y$ is between $x$ and $z$}. It is easy to see that there is a symmetry between the first and third arguments, i.e., $\btw(x,y,z)$ if and only if $\btw(z,y,x)$. Note that the betweenness relation is $\Pi^0_3$.

\begin{definition}
    Let \ordered express that for all distinct special points $x,y,z$, either $\btw(x,y,z)$, $\btw(y,x,z)$, or $\btw(z,x,y)$.
\end{definition}

This expresses totality of the betweennes relation, and is $\Pi^0_3$.

\begin{lemma}
    If $X = \overline{(M,d)}$ is homeomorphic to $[0,1]$ then $X$ satisfies \ordered.
\end{lemma}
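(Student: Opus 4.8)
The plan is to fix a homeomorphism $h \colon X \to [0,1]$ and transport the problem to the standard interval, where the crucial fact is that $[0,1]$ is linearly ordered so that every path from one side of a point to the other must cross it. Since $X$ is compact, both $h$ and $h^{-1}$ are uniformly continuous, and this two-sided uniform continuity is exactly what lets me pass between the ambient metric $d$ on $X$ and the Euclidean metric on $[0,1]$. Given distinct special points $x,y,z$, their images $h(x),h(y),h(z)$ are three distinct reals and hence linearly ordered; I relabel so that the middle one is, say, $h(y)$, with $h(x) < h(y) < h(z)$ (the other orderings are symmetric, and by the symmetry $\btw(a,b,c)\Leftrightarrow\btw(c,b,a)$ this will produce whichever of the three disjuncts of \ordered corresponds to the median point). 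It then suffices to prove the single claim that the median point lies between the other two, i.e.\ that $\btw(x,y,z)$ holds.

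To prove the claim, fix $\delta > 0$; I must exhibit $\delta',\epsilon < \delta$ witnessing the betweenness condition. First set $\delta' = \delta/2$. By uniform continuity of $h^{-1}$ on the compact space $[0,1]$, choose $\gamma > 0$ so small that $|s-t| < \gamma$ implies $d(h^{-1}(s),h^{-1}(t)) < \delta'$; I may also shrink $\gamma$ below $\min(h(y)-h(x),\,h(z)-h(y))$. Next, by uniform continuity of $h$ on the compact space $X$, choose $\epsilon_0 > 0$ with $d(a,b) < \epsilon_0 \Rightarrow |h(a)-h(b)| < \gamma$, and put $\epsilon = \min(\epsilon_0,\delta/2) < \delta$ (shrinking $\epsilon$ only strengthens the implication, so it still holds). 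Now take any $\epsilon$-path $x = a_1,\ldots,a_n = z$. Its image $h(a_1),\ldots,h(a_n)$ starts at $h(x) < h(y)$ and ends at $h(z) > h(y)$, so letting $i$ be least with $h(a_i) \geq h(y)$ gives a consecutive straddling pair $h(a_{i-1}) < h(y) \leq h(a_i)$. Because $d(a_{i-1},a_i) < \epsilon$ forces $|h(a_{i-1}) - h(a_i)| < \gamma$, the crossing point satisfies $|h(a_i) - h(y)| < \gamma$, whence $d(a_i, y) = d(h^{-1}(h(a_i)), h^{-1}(h(y))) < \delta'$. Thus every $\epsilon$-path from $x$ to $z$ passes within $\delta'$ of $y$, establishing $\btw(x,y,z)$ and therefore \ordered.

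The substance of the argument is the intermediate-value phenomenon on $[0,1]$ together with the simultaneous control of distances in both spaces, so the main point requiring care is the order in which the constants are chosen: $\delta'$ is fixed first, then $\gamma$ is extracted from uniform continuity of $h^{-1}$ to translate $[0,1]$-closeness into $X$-closeness, and only then is $\epsilon$ extracted from uniform continuity of $h$ to guarantee that consecutive path points have $\gamma$-close images. Compactness of $X$ is what makes both uniform-continuity steps available and is doing all the real work; without it one could not bound how much $h$ distorts the small $\epsilon$-steps. The restriction that intermediate path points be special is harmless here, since the endpoints $x,z$ are themselves special and density of the special points never needs to be invoked in this direction.
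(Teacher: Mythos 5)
Your proof is correct. It establishes the same reduction as the paper --- relabel the three points by the order of their images in $[0,1]$ and show the median point satisfies the betweenness condition --- but the mechanism for producing the witnesses $\delta'$ and $\epsilon$ is different. The paper stays inside $X$: it picks $\delta' < \delta$ with $x,z \notin B_{\delta'}(y)$, notes that $x$ and $z$ then lie in disjoint closed subsets of the compact set $X \setminus B_{\delta'}(y)$, and takes $\epsilon$ smaller than the (positive) distance between those closed sets, so that no $\epsilon$-path can pass from one to the other without entering $B_{\delta'}(y)$. You instead transport the whole problem to $[0,1]$, extracting $\gamma$ from uniform continuity of $h^{-1}$ and $\epsilon$ from uniform continuity of $h$, and then locate an explicit crossing index $i$ with $h(a_{i-1}) < h(y) \le h(a_i)$. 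Both arguments invoke compactness exactly once (positive distance between disjoint closed sets, versus two-sided uniform continuity); yours is more quantitative in that it exhibits the witnessing path point, while the paper's separation-style argument is the one that recurs later in the paper in settings with no global linear order to transport to (e.g.\ the analogous verification for $S^1$). Two minor remarks: your extra constraint $\gamma < \min(h(y)-h(x),\,h(z)-h(y))$ is not actually needed, since $i \ge 2$ already follows from $h(a_1) = h(x) < h(y)$; and your closing observation that density of special points is never used is accurate and consistent with the paper's definition of $\epsilon$-paths.
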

\begin{proof}
    Take distinct $x, y, z \in X$. Since $\overline{X} \cong [0,1]$, we can relabel $x$, $y$, and $z$ so $y$ is between $x$ and $z$. We wish to show that for all rational $\delta$, there exists $\delta' < \delta$ and $\epsilon < \delta'$ such that any $\epsilon$-path from $x$ to $z$ goes within $\delta'$ of $y$. Take any $\delta' < \delta$ such that $x,z \notin B_{\delta'}(y)$. Then in $X - B_{\delta'}(y)$, the points $x$ and $z$ are in disjoint closed sets. Let $\epsilon < \delta$ be smaller than the distance between these two closed sets. Then there is no $\epsilon$-path from $x$ to $z$ in $X - B_{\delta'}(y)$, hence any $\epsilon$-path from $x$ to $z$ goes within $\delta'$ of $y$.
\end{proof}

\subsection{Putting everything together: arcs and cutpoints}

So far, none of the properties we have introduced stipulate that a space must be non-trivial, so we introduce one final property. Recall that a continuum is said to be non-degenerate if it has more than one point. We say that $X = \overline{(M,d)}$ satisfies \nondegen if it has at least two distinct points. This is $\Sigma^0_2$.

Combining various previous lemmas, we have the following.

\begin{lemma}
    If $X = \overline{(M,d)}$ is homeomorphic to $[0,1]$, then it satisfies \nondegen, \compact, \conn, \localconn, and \ordered.
\end{lemma}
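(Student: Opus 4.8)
The plan is to recognize this as a bookkeeping lemma that assembles the individual characterizations proved in the preceding subsections, using only that $X \cong_{\homeo} [0,1]$ together with the fact that compactness, connectedness, local connectedness, and having more than one point are all homeomorphism-invariant properties of $[0,1]$. So for each of the five conditions I would verify it in turn, taking care to establish compactness first, since two of the remaining characterizations take compactness as a hypothesis.

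First I would dispatch \nondegen and \compact. Since $[0,1]$ has more than one point and $X \cong_{\homeo} [0,1]$, the space $X$ has at least two distinct points, giving \nondegen. For \compact, note that $X$ is compact because $[0,1]$ is, and that a complete metric space is compact if and only if it is totally bounded; total boundedness of $X = \overline{M}$ is equivalent to total boundedness of its dense set $M$ of special points, which is exactly what \compact asserts (following \cite{MN13}). Hence \compact holds.

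Next, with compactness secured, I would invoke the two characterization lemmas. Since $[0,1]$ is connected, $X$ is a compact connected space, so by the lemma characterizing $\conn$ among compact spaces, $X$ satisfies \conn. Since $[0,1]$ is locally connected, $X$ is compact and locally connected, so by the lemma showing that compact plus locally connected implies \localconn, $X$ satisfies \localconn. Finally, \ordered holds by the lemma already proved directly for spaces homeomorphic to $[0,1]$.

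There is no genuine obstacle here beyond the correct ordering of the steps: the real content lies in the earlier lemmas, and the only point requiring care is that \conn and \localconn are characterized only within the compact spaces, so compactness must be in hand before they are applied. I would therefore present the argument in the order \nondegen, \compact, \conn, \localconn, \ordered, citing the relevant earlier result for each.
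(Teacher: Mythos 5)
Your proof is correct and matches the paper's approach: the paper states this lemma with only the remark ``Combining various previous lemmas, we have the following,'' which is precisely the assembly you carry out. Your additional care about establishing compactness before invoking the \conn and \localconn characterizations is the right (and only) subtlety.
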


We must now show the converse.

\begin{lemma}
    If $X = \overline{(M,d)}$ satisfies \nondegen, \compact, \conn, \localconn, and \ordered, then it is homeomorphic to $[0,1]$.
\end{lemma}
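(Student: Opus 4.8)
The plan is to recognize $X$ as a non-degenerate \emph{Peano continuum} whose separation structure forces exactly two non-cut points, and then to invoke the classical characterization of the arc (\cite[Theorem 6.17]{Nadler92}): a non-degenerate continuum with exactly two non-cut points is homeomorphic to $[0,1]$. From \nondegen, \compact, \conn, and \localconn together with the preceding corollary, I already know that $X$ is a non-degenerate, compact, connected, locally connected metric space, and hence path-connected and locally path-connected; this is the setting in which ``cut point'' behaves well. Recall that $p$ is a \emph{cut point} if $X\setminus\{p\}$ is disconnected, and that since $X\setminus\{p\}$ is open in a locally connected space, its components are open and coincide with its path-components.

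The key bridge, which I would establish first, is that the combinatorial relation $\btw$ exactly captures topological separation: for distinct $x,y,z\in X$ (special or not), $\btw(x,y,z)$ holds if and only if $y$ separates $x$ from $z$, i.e.\ $x$ and $z$ lie in different components of $X\setminus\{y\}$. For the forward direction I argue the contrapositive: if $x,z$ lie in the same component of $X\setminus\{y\}$, then by local path-connectedness there is an honest path $P$ from $x$ to $z$ missing $y$; since $P$ is compact it stays at some distance $\eta>0$ from $y$, and approximating $P$ by fine $\epsilon$-paths through nearby special points produces, for every $\delta',\epsilon<\eta$, an $\epsilon$-path from $x$ to $z$ avoiding $B_{\delta'}(y)$, contradicting $\btw(x,y,z)$. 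For the converse, if $y$ separates $x$ from $z$ then $X\setminus\{y\}=U\sqcup V$ with $U,V$ open, $x\in U$, $z\in V$, and $\overline U\cap\overline V\subseteq\{y\}$; for small $\delta'$ the compact sets $\overline U\setminus B_{\delta'}(y)$ and $\overline V\setminus B_{\delta'}(y)$ are disjoint, hence separated by some $\epsilon>0$, so no $\epsilon$-path from $x$ to $z$ can avoid $B_{\delta'}(y)$, which gives $\btw(x,y,z)$.

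With this equivalence in hand I would first upgrade \ordered from special points to all points: given distinct $x,y,z$ with no one of them between the other two, the equivalence says none of them separates the remaining pair, so (using local path-connectedness to keep nearby points in the same component) sufficiently close special points $x',y',z'$ still have no one between the other two, contradicting \ordered. Totality of $\btw$ on all of $X$, together with the equivalence, then bounds the non-cut points: if $p_1,p_2,p_3$ were three distinct non-cut points, one of them — say $p_2$ — would be between the other two, so $p_2$ would separate $p_1$ from $p_3$ and hence be a cut point, a contradiction. Thus $X$ has at most two non-cut points, while every non-degenerate continuum has at least two non-cut points; so $X$ has exactly two, and \cite[Theorem 6.17]{Nadler92} gives $X\cong_{\homeo}[0,1]$.

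I expect the main obstacle to be the betweenness--separation equivalence, and within it the perturbation argument upgrading \ordered to non-special points: I must check that ``being in the same component of the complement'' is stable under small movements of all three points, which is exactly where compactness (to extract the distance $\eta$) and local path-connectedness (to reconnect the perturbed points to the original path without crossing $y'$) are both needed. The $\delta'$--$\epsilon$ bookkeeping in the definition of $\btw$ also has to be handled with care, but the ``sufficient smallness'' philosophy of the earlier subsections makes the strict/non-strict inequality issues harmless.
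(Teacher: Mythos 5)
Your proposal is correct, and it reaches the same destination as the paper (non-degenerate Peano continuum, at most two non-cut points from \ordered, at least two from Moore's theorem, then \cite[Theorem 6.17]{Nadler92}) by a genuinely different middle route. The paper only uses the easy implication that an arc from $x$ to $z$ missing $y$ forces $\neg\btw(x,y,z)$; from \ordered it then deduces that arcs between \emph{special} points are unique (via a three-arc configuration around a perturbed branch point), and bounds the non-cut points by an arc argument. You instead prove the full two-way equivalence between $\btw(x,y,z)$ and ``$y$ separates $x$ from $z$'' for \emph{arbitrary} points --- the converse direction, via the disjoint compact sets $\overline U\setminus B_{\delta'}(y)$ and $\overline V\setminus B_{\delta'}(y)$, is exactly right and does not appear in the paper --- then upgrade \ordered to all of $X$ by a perturbation argument and conclude immediately that a third non-cut point would have to lie between the other two and hence be a cut point. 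What your route buys is a cleaner and more explicit treatment of the special-versus-arbitrary-points issue: the paper's final claim applies its arc-avoidance reasoning to three non-cut points that need not be special and is terse about why the resulting three arcs are contradictory, whereas your separation equivalence plus perturbation makes this airtight. What the paper's route buys is the stronger structural fact that $X$ is uniquely arcwise connected on special points, which is closer to the classical dendrite-style characterizations. Your own flagged worry --- the stability of ``same component of the complement'' under small perturbations --- is handled correctly by extracting the distance $\eta$ from the compact path and reconnecting through small path-connected neighbourhoods, so I see no gap.
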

\begin{proof}
Since $X = \overline{(M,d)}$ satisfies \compact, \conn, and \localconn we know that it is compact, connected, locally connected, path connected, and locally path connected. In particular, $X$ is a continuum. We will show that $X$ has exactly two non-cut points, which implies that is is homeomorphic to $[0,1]$. (See \cite[Theorem 6.17]{Nadler92}.)



\begin{claim}\label{lem:arc-avoid}
    Suppose that there is an arc from $x$ to $z$ that does not pass through $y$. Then $\neg\btw(x,y,z)$
\end{claim}
\begin{proof}
    Let $f$ be the arc from $x$ to $z$ and suppose that $y$ is not on this arc. There is some distance $d$ between $y$ and $f$. For sufficiently small $\epsilon$, an $\epsilon$-path from $x$ to $z$ approximating $f$ avoids $B_{d/2}(y)$. Thus $\neg \btw(x, y, z)$.
\end{proof}

\begin{claim}\label{lemma: unique paths x to y}
    For $x, y \in M$, there is a unique arc from $x$ to $y$.
\end{claim}
\begin{proof}
    Much of the technical difficulty with proving this claim is the fact that \ordered only tells us about special points, but to prove the claim we need to argue for all points $x,y$ not necessarily special.
    
    The existence of such an arc follows from path-connectedness (which is equivalent to arc-connectedness in Hausdorff spaces). To prove uniqueness, suppose we have two distinct arcs, $f$ and $g$, from $x$ to $y$. Take a point $z$ on $f$ that is not on $g$. Take a path-connected set around $z$ that is disjoint from $g$, say $U$, and take a special point $z'\in U$. Then there is an arc $h$ from $z$ to $z'$ within $U$, and thus disjoint from $g$. We may assume by moving $z'$ and shortening $h$ that $z$ is the only point on both $h$ and $f$. Break up $f$ at the point $z$ into two segments $f_1$ from $x$ to $z$ and $f_2$ from $z$ to $y$. Choose also special points $x'$ and $y'$ close to $x$ and $y$, respectively, with arcs from $x$ to $x'$ near $x$ and from $y$ to $y'$ near $y$. The picture is something like this:
    \[ \begin{tikzpicture}

        \node [fill=black,label=right:$x$,circle,inner sep=2pt](x)  at (0,0) {};

        \node [fill=black,label=$x'$,circle,inner sep=2pt](xp)  at (-1,0) {};

        \node [fill=black,label=left:$y$,circle,inner sep=2pt](y)  at (7,0) {};

        \node [fill=black,label=$y'$,circle,inner sep=2pt](yp)  at (8,0) {};

        \node [fill=black,label=below:$z$,circle,inner sep=2pt](z)  at (3.5,2) {};

        \node [fill=black,label=$z'$,circle,inner sep=2pt](zp)  at (3.5,3) {};

        \draw (x) to [bend left] (z);
        \draw (z) to [bend left] (y);

        \draw (x) to [bend right] (3.5,-2);
        \draw (3.5,-2) to [bend right] (y);

        \node [fill=none](f1)  at (5.75,1) {$f_2$};
        \node [fill=none](f2)  at (1.25,1) {$f_1$};
        \node [fill=none](g)  at (3.5,-2.5) {$g$};

        \draw (xp) -- (x);
        \draw (yp) -- (y);
        \draw (zp) -- (z);

        \draw[fill=black, opacity=0.2] (z) circle [radius=1.75];

        \node [fill=none](U)  at (2.5,2.5) {$U$};
        
    \end{tikzpicture} \]
    Thus we have arcs containing $x'$ and $y'$ but not $z'$, $y'$ and $z'$ but not $x'$, and $x'$ and $z'$ but not $y'$. By Claim \ref{lem:arc-avoid} we have that $\neg \btw(y', z', x')$, $\neg \btw(x', y', z')$, and $\neg \btw(z', x', y')$. This contradicts $\ordered$ and proves the lemma.
\end{proof}

\begin{claim}
    There are at most two non-cut points. 
\end{claim}
\begin{proof}
    Suppose that there were three non-cut points $a,b,c$. Without loss of generality, we may assume that the arc $f$ from $a$ to $b$ does not contain $c$. (If it did, then take the shorter arc from $a$ to $c$ and swap $b$ and $c$.) We may also assume that the arc $g$ from $b$ to $c$ does not contain $a$, as if it did, then we could take the arc from $a$ to $c$ and swap $a$ and $b$.

    Now we argue that $b$ is in fact a cut point. Suppose not; then $X - \{b\}$ is connected, and since it is still locally path connected, it is path connected. Thus there is an arc from $a$ to $c$ avoiding $b$, a contradiction.
\end{proof}

It is a theorem of Moore that any non-degenerate continuum must have at least two non-cut points (see \cite[Theorem 6.6]{Nadler92}). Thus we have exactly two non-cut points. This proves the lemma.
\end{proof}

Putting together all of the lemmas, we have one half of Theorem \ref{thm:main-arc}.
\begin{theorem}
   $X = \overline{(M,d)}$ satisfies \nondegen, \compact, \conn, \localconn, and \ordered if and only if $X$ is homeomorphic to $[0,1]$. In particular, $\HCopies([0,1])$ is $\Pi^0_4$.
\end{theorem}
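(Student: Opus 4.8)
The plan is to read the biconditional off directly from the two lemmas immediately preceding the theorem statement, which already dispatch the two implications separately. One lemma gives that every homeomorphic copy of $[0,1]$ satisfies all five properties; the other gives the converse, that any $X = \overline{(M,d)}$ satisfying \nondegen, \compact, \conn, \localconn, and \ordered is homeomorphic to $[0,1]$. Conjoining these yields the stated equivalence with no additional argument, so the only genuine task remaining is the complexity bookkeeping asserted in the final sentence.

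For that complexity claim, I would tabulate the level established for each conjunct and observe that they all sit inside $\Pi^0_4$: \nondegen is $\Sigma^0_2$, \conn is $\Pi^0_2$, \compact and \ordered are each $\Pi^0_3$ (the latter because \btw is $\Pi^0_3$ and \ordered applies a number quantifier over special points to a finite disjunction of such conditions), and \localconn is $\Pi^0_4$. Since $\Sigma^0_2 \subseteq \Pi^0_3 \subseteq \Pi^0_4$ and $\Pi^0_2 \subseteq \Pi^0_4$, every one of the five defining conditions is $\Pi^0_4$. The set $\HCopies([0,1])$ is precisely the set of presentations satisfying all five conditions at once, i.e., the intersection of five $\Pi^0_4$ sets; as $\Pi^0_4$ is closed under finite intersection, this intersection is again $\Pi^0_4$, which gives the bound. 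The same observation upgrades to the lightface statement, so $\HCopies([0,1])$ is lightface $\Pi^0_4$.

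I expect no real obstacle at this stage, since all the substantive work has already been done: the equivalence of \localconn with local connectedness within compact spaces, and the betweenness/cut-point analysis showing the five conditions force a continuum with exactly two non-cut points and hence, by the classical characterization of $[0,1]$ as the unique such continuum, a homeomorphism with $[0,1]$. The one point worth a final check is that each individual estimate was arranged with the correct strict-versus-nonstrict inequalities—closed conditions in antecedents, open conditions in consequents—so that the advertised arithmetic levels, and therefore the claimed $\Pi^0_4$ bound, genuinely hold; the paper has already been careful about this throughout. Granting that, the theorem is just the conjunction of the two lemmas together with closure of $\Pi^0_4$ under finite intersection.
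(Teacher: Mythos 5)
Your proposal is correct and follows essentially the same route as the paper: the theorem is obtained by conjoining the two preceding lemmas (each direction of the biconditional), and the $\Pi^0_4$ bound follows from the complexity levels already recorded for each condition ($\Sigma^0_2$, $\Pi^0_3$, $\Pi^0_2$, $\Pi^0_4$, $\Pi^0_3$) together with closure of $\Pi^0_4$ under finite intersection. The paper does not even write out this bookkeeping explicitly, so your tabulation is, if anything, slightly more careful than the original.
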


Note that of all of these conditions, it is only \localconn which is $\Pi^0_4$. Thus, in some sense, the hard part of saying that $X = \overline{(M,d)}$ is homeomorphic to $\HCopies([0,1])$ is in saying that $X$ is locally connected.

\begin{corollary}\label{cor:within-loc-conn}
    $\HCopies([0,1])$ is $\Pi^0_3$ within the locally connected spaces.
\end{corollary}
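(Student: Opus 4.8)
The plan is to produce a single $\Pi^0_3$ set $A^*$ for which $\HCopies([0,1]) = A^* \cap C$, where $C$ is the class of locally connected spaces; by the definition of ``$\Gamma$ within $C$'' this is exactly what the corollary asserts. The key observation is that in the characterization just established, $X$ is homeomorphic to $[0,1]$ if and only if $X$ satisfies \nondegen, \compact, \conn, \localconn, and \ordered, and of these five conditions only \localconn is $\Pi^0_4$; the other four are each at most $\Pi^0_3$ (namely \nondegen is $\Sigma^0_2$, \conn is $\Pi^0_2$, and \compact and \ordered are $\Pi^0_3$). Accordingly I would define $A^*$ to be the set of presentations satisfying \nondegen, \compact, \conn, and \ordered --- that is, the full list with the single expensive condition \localconn deleted.

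First I would check that $A^*$ is $\Pi^0_3$. Since $\Sigma^0_2 \subseteq \Pi^0_3$ and $\Pi^0_2 \subseteq \Pi^0_3$, each of the four conjuncts lies in $\Pi^0_3$, and $\Pi^0_3$ is closed under finite intersections, so $A^*$ is $\Pi^0_3$.

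Next I would verify the set equality $\HCopies([0,1]) = A^* \cap C$. The forward inclusion is immediate: if $X$ is homeomorphic to $[0,1]$ then by the preceding theorem $X$ satisfies all five conditions, in particular the four defining $A^*$, and $X$ is locally connected, so $X \in A^* \cap C$. For the reverse inclusion, suppose $X \in A^* \cap C$. Then membership in $A^*$ gives \compact, so $X$ is compact, and membership in $C$ gives that $X$ is locally connected; by the lemma that every compact locally connected space satisfies \localconn, it follows that $X$ satisfies \localconn as well. Together with the four conditions guaranteed by $A^*$, this shows $X$ satisfies all five conditions, and the characterization theorem then yields that $X$ is homeomorphic to $[0,1]$.

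There is no deep obstacle here; the only point requiring care is that discarding \localconn is legitimate only because its redundancy over the ambient class relies on compactness, and compactness is precisely one of the $\Pi^0_3$ conditions retained inside $A^*$. Thus the compactness needed to invoke the ``compact and locally connected implies \localconn'' lemma is supplied by $A^*$ itself rather than assumed from $C$, so the reduction of the complexity from $\Pi^0_4$ down to $\Pi^0_3$ is accounted for entirely by removing \localconn, and the argument goes through cleanly.
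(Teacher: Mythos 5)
Your proof is correct and follows exactly the route the paper intends: drop \localconn, observe that the remaining conditions (\nondegen, \compact, \conn, \ordered) are each at most $\Pi^0_3$, and recover \localconn from the ambient local connectedness together with the compactness supplied by \compact via the lemma that compact locally connected spaces satisfy \localconn. Your closing remark correctly identifies the one point of care, namely that the compactness needed for that lemma comes from $A^*$ rather than from the ambient class; nothing further is needed.
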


In Theorem \ref{lem:01-within} to follow, we will show that $\HCopies([0,1])$ is $\Pi^0_3$-complete within the locally connected spaces.

\section{\texorpdfstring{A warmup: $\bfPi^0_3$-hardness within the compact, connected, locally connected spaces}{A warmup: Pi03-hardness within the compact, connected, locally connected spaces}}

We will prove the following theorem as a warmup.

\begin{theorem}\label{lem:01-within}
    $\HCopies([0,1])$ is $\bfPi^0_3$-Wadge-complete within the compact, connected, locally connected spaces.
\end{theorem}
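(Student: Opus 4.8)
The upper bound is immediate from the previous section. By Corollary~\ref{cor:within-loc-conn}, $\HCopies([0,1])$ is $\Pi^0_3$ within the locally connected spaces, say $\HCopies([0,1]) = A^* \cap \{\text{locally connected}\}$ with $A^* \in \Pi^0_3$. Since every compact, connected, locally connected space is in particular locally connected, intersecting with this smaller class $\mathcal{C}$ gives $\HCopies([0,1]) = A^* \cap \mathcal{C}$, so $\HCopies([0,1])$ is $\Pi^0_3$ within $\mathcal{C}$. All the work is in the hardness direction: I would fix the $\Pi^0_3$-complete set $P = \{\alpha \in (2^\omega)^\omega : \forall n\ \alpha_n \text{ has finitely many } 1\text{'s}\}$ (the universal quantifier over the $\Sigma^0_2$-complete set $\mathrm{Fin}$) and build a continuous $\alpha \mapsto X_\alpha$ whose image lies entirely in $\mathcal{C}$ and with $X_\alpha \cong_{\homeo} [0,1]$ iff $\alpha \in P$.

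The overall design is a \emph{shrinking necklace of loops}. The space $X_\alpha$ is built from circles $C_1, C_2, \ldots$ of diameter $\le 2^{-n}$, strung together in series and converging to a single limit point, so that the whole object is always a Peano continuum. Circle $C_n$ is presented as a \emph{circle with a gap}: an arc covering all of $C_n$ except a small open sub-arc of width $\delta_n$, and I shrink $\delta_n$ monotonically by one step each time a $1$ appears in $\alpha_n$. The junctions joining $C_n$ to its neighbours are placed at the two moving edges of the gap, so that at every finite stage site $n$ contributes a clean arc (a circle minus an open arc, with no dangling free end) and the whole necklace assembles into a single arc. If $\alpha_n$ has finitely many $1$'s, $\delta_n$ freezes at a positive value and $C_n$ stays an arc; if all $\alpha_n$ are finite, $X_\alpha$ is one arc converging to its far endpoint, hence $\cong [0,1]$. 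If some $\alpha_n$ has infinitely many $1$'s, then $\delta_n \to 0$, the gap closes in the limit, $C_n$ becomes a genuine circle, and $X_\alpha$ contains a loop. To finish this direction I would invoke the characterization from Section~\ref{sec:three} (a Peano continuum is $\cong [0,1]$ iff it has exactly two non-cut points, \cite[Theorem 6.17]{Nadler92}): a closed $C_n$ contributes a whole sub-continuum of non-cut points, so $X_\alpha \not\cong [0,1]$.

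The hard part will be realizing ``the gap closes iff there are infinitely many $1$'s'' while keeping every $X_\alpha$ inside $\mathcal{C}$. The naive idea of simply driving a single inter-point distance to $0$ cannot work: any one distance is a continuous function of $\alpha$, so it cannot vanish exactly on the non-$F_\sigma$ (indeed $\Pi^0_2$) set ``$\alpha_n$ infinite.'' The closing must therefore be a genuine limit-point phenomenon — the gap width tends to $0$ through a monotone sequence of positive rationals precisely when infinitely many shrinking steps occur — and this has to be reconciled simultaneously with three demands: (i) every finite stage, and hence every frozen (finite-$\alpha_n$) limit, is a clean arc with no spurious spike or extra endpoint; (ii) global connectivity of the necklace is preserved across all sites even as the gap-edge junctions move inward; and (iii) the limit space stays locally connected, i.e., the closing gap does not create a non-locally-connected point (the danger being a topologist's-sine/Warsaw-type accumulation). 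The moving-junction bookkeeping needed to make the all-finite case assemble into an arc with exactly two endpoints, while the infinite case yields a locally connected continuum containing a circle, is the technical heart of the argument.

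The remaining steps are routine verifications that I would carry out in order: check that $\alpha \mapsto X_\alpha$ is continuous (indeed computable), since the metric is specified by monotone, hence fast-Cauchy, approximations depending only on finite prefixes of $\alpha$; check that $X_\alpha$ satisfies \compact, \conn, and \localconn for every $\alpha$ (the diameters $\le 2^{-n}$ give total boundedness and compactness, the necklace is connected by construction, and shrinking circles/gaps keep it locally connected); and finally confirm the equivalence $X_\alpha \cong [0,1] \Leftrightarrow \alpha \in P$ using the non-cut-point characterization as above. Together with the upper bound this yields that $\HCopies([0,1])$ is $\bfPi^0_3$-Wadge-complete within the compact, connected, locally connected spaces.
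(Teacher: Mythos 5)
Your upper bound is exactly the paper's, and your hardness skeleton --- reduce from $\forall n\,(\alpha_n\in\mathrm{Fin})$, make a loop appear at site $n$ exactly when $\alpha_n$ is infinite, realize the closing of the loop as a limit-point phenomenon rather than as a single special-point distance going to $0$, and detect the loop via the non-cut-point characterization --- is the right one and matches the paper's in spirit. The geometric realization differs: the paper does not use a necklace of circles in series, but a single fixed base segment $[0,1]\times\{0\}$ together with the graph of one continuous function $f\colon[0,1]\to[0,1]$ with $f(0)=0$, $f(1)=1$, where $f$ has a downward spike over each site $b_n$ with $f(b_n)=b_n2^{-|W_n|}$, defined as a monotone limit over the approximations $W_n[j]$. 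This choice makes your three demands (i)--(iii) evaporate: the union of the two graphs, meeting at $(0,0)$, is automatically compact, connected and locally connected; it is an arc iff the graph of $f$ meets the base line only at $x=0$; and any extra intersection creates a simple closed curve. Nothing about junctions ever has to move.

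That matters because the part you defer as ``the technical heart'' is a genuine gap, and the specific design you sketch has an unresolved conflict. You place the junctions joining $C_n$ to its neighbours ``at the two moving edges of the gap,'' but in this presentation model you commit permanently to every special point and to fast Cauchy approximations of all pairwise distances; you cannot later relocate the points where the connecting arcs attach. If instead you fix the junctions and let only the gap edges move, each bead $C_n\setminus(\text{gap})$ is an arc whose two endpoints (the gap edges) are free ends, so in the all-finite case the necklace has infinitely many non-cut points and is not an arc. A repair is possible (e.g.\ extending the connecting arcs along $C_n$ as the gap shrinks), but that is precisely the bookkeeping you have not done, and it is where local connectedness at the closing gaps and at the accumulation point of the necklace, and the ``exactly two endpoints'' claim, must actually be verified. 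Relatedly, your final remark that the metric is given by ``monotone, hence fast-Cauchy, approximations'' is not correct: a monotone approximation to $2^{-|\alpha_n|}$ is not fast Cauchy, and no distance between two special points may depend on $|\alpha_n|$ (as you yourself observe earlier when rejecting the naive idea). The paper's fix is to place special points only where $f$ is exactly computable from a finite prefix --- never at the spike tips $b_n$ --- so the closing gap is witnessed only by limit points. Either supply the necklace bookkeeping in full or switch to the graph-plus-baseline construction.
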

\begin{proof}
    It follows from Corollary \ref{cor:within-loc-conn} that $\HCopies([0,1])$ is $\Pi^0_3$ within the compact locally connected spaces. So it remains to prove a hardness result. Consider the $\bfPi^0_3$-Wadge-complete set $S = \{W \subseteq \omega \times \omega \; \mid \; \forall n \;\; |W_n| < \infty\}$ where $W_n = \{ (n,m) \mid (n,m) \in W\}$ is the $n$th column of $W$. Given $W$ we must build a compact, connected, locally connected space $X = X_W$ such that $W \in S$ if and only if $X_W \cong [0,1]$. Fix $W$ for which we must define $X = X_W$. We will work somewhat dynamically by letting $W_n[i] = W_n \res i = \{m \in W_n \mid m < i\}$. Note that $W_n[0] = \varnothing$.

    We construct $X$ inside the unit square $[0,1] \times [0,1]$. We will always include in $X$ the line $[0,1] \times \{0\} = \{(x,0) : x \in [0,1]\}$. We will also include in $X$ the graph $\Gamma_f$ of a continuous function $f \colon [0,1] \to [0,1]$. Thus 
    \[ X = \Gamma_f \; \cup \; [0,1] \times \{0\}.\]
    The function $f$ will have the property that $f(0) = 0$ and $f(1) = 1$. Thus $X$ will always be connected, locally connected, and compact. $X$ will be homeomorphic to $[0,1]$ if and only if $f(x) > 0$ for all $x > 0$.

    It just remains to define $f$. Pick some sequence of elements, alternating between $a_i$ and $b_i$,
    \[ 0 < \cdots < b_2 < a_2 < b_1 < a_1 < b_0 < a_0 = 1 \]
    converging to $0$. In each interval $[a_{i+1},a_i]$ choose an increasing sequence 
    \[ a_{i+1} = \ell_i^0 < \ell_i^1 < \ell_i^2 < \cdots < b_i \]
    converging to $b_i$, and a decreasing sequence
    \[ b_i < \cdots < r_i^2 < r_i^1 < r_i^0 = a_i \]
    converging to $b_i$. Thus we have divided the interval $[0,1]$ into intervals $[a_{i+1},a_i]$, and we have divided each of these intervals into a left interval $[a_{i+1},b_i]$ and a right interval $[b_i,a_i]$. The left and right intervals are in turn divided into intervals $[\ell_i^j,\ell_i^{j+1}]$ on the left and $[r_i^{j+1},r_i^j]$ on the right:
    \[ a_{i+1} = \ell_i^0 < \ell_i^1 < \ell_i^2 < \cdots < b_i < \cdots < r_i^2 < r_i^1 < r_i^0 = a_i.\]
    Every $t \in [0,1]$ is included in exactly one interval $[\ell_i^j,\ell_i^{j+1}]$ or $[r_i^{j+1},r_i^j]$, except the endpoints which are included in two of these intervals, and the points $0$ and $b_i$ are not included in any of these intervals.

    The idea is that $X$ will look somewhat like the following:
\[\begin{tikzpicture}[scale = 0.5]
\draw  (11.25,6.25) -- (1.25,6.25);
\draw  (11.25,16.25) -- (11,12);
\draw  (11,12) -- (10.75,10.25);
\draw  (10.75,10.25) -- (10.5,12);
\draw  (10.5,12) -- (10.25,15.25);
\draw  (10.25,15.25) -- (10,11.75);
\draw  (10,11.75) -- (9.75,8.25);
\draw  (9.75,8.25) -- (9.5,11.5);
\draw  (9.5,11.5) -- (9.25,14.25);
\draw  (9.25,14.25) -- (9,12.25);
\draw  (9,12.25) -- (8.75,9.5);
\draw  (8.75,9.5) -- (8.5,11);
\draw  (8.5,11) -- (8.25,13.25);
\draw  (8.25,13.25) -- (8,11);
\draw  (8,11) -- (7.75,6.5);
\draw  (7.75,6.5) -- (7.5,10);
\draw  (7.5,10) -- (7.25,12.25);
\draw  (7.25,12.25) -- (7,9.25);
\draw  (7,9.25) -- (6.75,7.75);
\draw  (6.75,7.75) -- (6.5,9.25);
\draw  (6.5,9.25) -- (6.25,11.25);
\draw  (6.25,11.25) -- (6,8.5);
\draw  (6,8.5) -- (5.75,6.5);
\draw  (5.75,6.5) -- (5.5,8.5);
\draw  (5.5,8.5) -- (5.25,10.25);
\draw  (5.25,10.25) -- (5,7.5);
\draw  (5,7.5) -- (4.75,9.75);
\draw  (4.75,9.75) -- (4.5,6.75);
\draw  (4.5,6.75) -- (4.25,9.25);
\draw  (4.25,9.25) -- (4,6.5);
\draw  (4,6.5) -- (3.75,8.75);
\draw  (3.75,8.75) -- (3.5,7.5);
\draw  (3.5,7.5) -- (3.25,8.25);
\draw  (3.25,8.25) -- (3,6.75);
\draw  (3,6.75) -- (3,8);
\draw  (3,8) -- (2.85,6.5);
\draw  (2.85,6.5) -- (2.75,7.75);
\draw  (2.75,7.75) -- (2.6,6.75);
\draw  (2.6,6.75) -- (2.5,7.5);
\draw  (2.5,7.5) -- (2.35,6.5);
\draw  (2.35,6.5) -- (2.25,7.25);
\draw  (2.25,7.25) -- (2.1,6.5);
\draw  (2.1,6.5) -- (2,7);
\draw (2,7) -- (1.9,6.5);
\draw (1.9,6.5) -- (1.8,6.8);
\draw (1.8,6.8) -- (1.7,6.3);
\draw (1.7,6.3) -- (1.6,6.6);
\draw (1.6,6.6) -- (1.55,6.35);
\draw (1.55,6.35) -- (1.5,6.5);
\draw (1.5,6.5) -- (1.45,6.3);
\draw (1.45,6.3) -- (1.4,6.4);
\draw (1.4,6.4) -- (1.35,6.27);
\draw (1.35,6.27) -- (1.3,6.3);
\end{tikzpicture}\]
    The straight horizontal line is $[0,1] \times \{0\}$ and the squiggly line is the graph of $f$. Each of the downward spikes in the graph of $f$ happens between some $a_i$ and $a_{i+1}$, with the points at $b_i$. The larger $|W_i|$, the lower the spike gets, so that if $|W_i| = \infty$, then the spike touches the horizontal line. This makes the space $X$ not homeomorphic to $[0,1]$. If no spike touches the horizontal line, then $X$ will be homeomorphic to $[0,1]$.

    Given $t \in [0,1]$, define $f(t)$ as follows:
    \begin{enumerate}
        \item If $t = 0$, then $f(t) = f(0) = 0$.
        \item If $t = b_i$, then $f(t) = f(b_i) = t 2^{-|W_i|} = b_i 2^{-|W_i|}$ (which is equal to $0$ if $W_i$ is infinite).
        \item If $t \in [\ell_i^j,\ell_i^{j+1}]$ then
        \[ f(t) = t \left[ \frac{t-\ell_i^j}{\ell_i^{j+1} - \ell_i^j} \left(2^{-|W_i[j+1]|} - 2^{-|W_i[j]|} \right) + 2^{-|W_i[j]|} \right].\]
        So $f(\ell_i^j) = t 2^{-|W_i[j]|} = \ell_i^j 2^{-|W_i[j]|}$ and $f(\ell_i^{j+1}) = t 2^{-|W_i[j+1]|} = \ell_i^{j+1} 2^{-|W_i[j+1]|}$. In between, $f(t)$ is $t$ times a linear interpolation.
        \item If $t \in [r_i^{j+1},r_i^j]$ then
        \[ f(t) =  t \left[ \frac{t-r_i^{j+1}}{r_i^{j} - r_i^{j+1}} \left(2^{-|W_i[j]|} - 2^{-|W_i[j+1]|} \right) + 2^{-|W_i[j+1]|} \right].\]
        This is essentially (3) in reverse. We have $f(r_i^j) = t 2^{-|W_i[j]|} = r_i^j 2^{-|W_i[j]|}$ and $f(r_i^{j+1}) = t 2^{-|W_i[j+1]|} = r_i^{j+1} 2^{-|W_i[j+1]|}$.
    \end{enumerate}
    We must check that $f(t)$ is well-defined; that is, for any point which is in two of these intervals, the definitions of $f$ agree on those common points. This is easy to see for the points $\ell_i^j$ with $j > 0$ which are parts of the intervals $[\ell_i^{j-1},\ell_i^j]$ and $[\ell_i^j,\ell_i^{j+1}]$. Similarly, it is easy to see for the points $r_i^j$ with $j > 0$. The non-trivial case is the points $a_i = \ell_i^0 = r_{i-1}^0$. But in this case, $|W_i[0]| = |W_{i-1}[0]| = 0$ and so $f(a_i) = a_i$.

    We also need to check that the function is continuous, which means checking that (1) and (2) agree with the limiting behaviour of $f$. For (1), we have $f(t) \leq t$ and so $\lim_{t \to 0} f(t) = 0$. For (2), for $t \in [\ell_i^j,\ell_i^{j+1}]$ or $t \in [r_i^{j+1},r_i^j]$ we have $t 2^{-|W_i[j+1]|} \leq f(t) \leq t 2^{-|W_i[j]|}$. Thus $f(b_i) = b_i 2^{-|W_i|}$ is the limit as $t \to b_i$.

    We also need to be able to produce $X = X_W$ computably and uniformly from $W$. Note that we cannot compute $|W_i|$ uniformly from $W$, so we cannot compute $f(b_i)$. But we can compute $f$ on all other inputs, and this gives a dense set to produce a presentation of $X_W$.

    Finally, note that if each $W_i$ is finite, then $f(t) > 0$ when $t > 0$, but if $W_i$ is infinite, then $f(b_i) = 0$. As discussed before, $X_W$ is homeomorphic to $[0,1]$ if and only if $f(t) > 0$ for all $t > 0$. This completes the argument.
\end{proof}

\section{\texorpdfstring{$\bfPi^0_4$-hardness for $[0,1]$}{Pi04-hardness for [0,1]}}

We now turn to proving that $\HCopies([0,1])$ is $\bfPi^0_4$-complete.

\subsection{The main idea}

We begin by showing that $\HCopies([0,1])$ is $\bfSigma^0_3$-hard. By attaching infinitely many instances of this construction end-to-end, we will get a $\bfPi^0_4$ completeness result.

Looking at the characterization above, we see that the $\bfPi^0_4$ complexity comes from the local connectedness, and so we should try to make a line in the $\bfSigma^0_3$ outcome, and a non-locally-connected space in the $\bfPi^0_3$ outcome. The non-locally-connected spaces we build will be based off of the following:


\[\begin{tikzpicture}
		\draw [thick] plot coordinates {(-5,0) (-5,2) (5,2) (5,1.75) (-4.75,1.75) (-4.75,0) (-4.5,0) (-4.5,1.5) (5,1.5) (5,1.3) (-4.3,1.3) (-4.3,0) (-4.1,0) (-4.1,1.1) (5,1.1) (5,0.95) (-3.95,0.95) (-3.95,0) (-3.8,0) (-3.8,0.8) (5,0.8) (5,0.7) (-3.7,0.7) (-3.7,0) (5,0)};
		
		\node at (5,0.3) {$\vdots$};
		\node at (3,0.3) {$\vdots$};
		\node at (1,0.3) {$\vdots$};
		\node at (-1,0.3) {$\vdots$};
		\node [fill=white] at (-3.3,0) {$\cdots$};
\end{tikzpicture}\]
 
 The space is built up from infinitely many \textit{tendrils}. Above we drew each tendril with right-angled turns, like the following:
\[\begin{tikzpicture}
    \draw [thick] plot coordinates {(-8,0) (-7,0) (-7,1) (7,1) (7,0.75) (-6.75,0.75) (-6.75,0) 
    (0,0) (7,0)};
\end{tikzpicture}\]
In the actual construction, we will have the tendrils consist of two straight lines, as follows:
\[\begin{tikzpicture}
    \draw [thick] plot coordinates {(-8,0) (-7,0) (7,2)  (-4,0) 
    (0,0) (7,0)};
\end{tikzpicture}\]
But for clarity of the pictures it will be easier to draw the tendrils using horizontal and vertical lines.
    
Each of the tendrils originates from somewhere on the left half, and goes all the way to the rightmost side. The rightmost point has no small connected neighbourhoods, as each neighbourhood contains some tendril coming from the left side.
 
At each stage $s$, we will have a ``target'' topological space which we are building towards. This target space will be a non-locally-connected space looking something like the space above with many tendrils. However, at each stage of the construction we will have only actually put finitely many points from the target space into the metric space being constructed. For example, suppose that the target space has a tendril
 \[\begin{tikzpicture}
    \draw [thick] plot coordinates {(-8,0) (-7,0) (-7,1) (7,1) (7,0.75) (-6.75,0.75) (-6.75,0) 
    (0,0) (7,0)};
 \end{tikzpicture}\]
At a finite stage, we might have only placed points configured as
\[\begin{tikzpicture}
    \foreach \x in {-28,...,28}
        \node [fill=black,circle,inner sep=1pt] at (\x*0.25,0) {};

    \foreach \x in {-28,...,28}
        \node [fill=black,circle,inner sep=1pt] at (\x*0.25-0.1,1) {};

    \foreach \x in {-28,...,28}
        \node [fill=black,circle,inner sep=1pt] at (\x*0.25-0.075,0.75) {};
            
    \node [fill=black,circle,inner sep=1pt] at (-7-0.025,0.25) {};
    \node [fill=black,circle,inner sep=1pt] at (-7-0.05,0.5) {};
    
    \node [fill=black,circle,inner sep=1pt] at (-6.75,0) {};

    \node [fill=black,circle,inner sep=1pt] at (-6.75-0.025,0.25) {};

    \node [fill=black,circle,inner sep=1pt] at (-6.75-0.05,0.5) {};

    \node [fill=black,circle,inner sep=1pt] at (-6.75-0.075,0.75) {};

    \node [fill=black,circle,inner sep=1pt] at (-8,0) {};

    \node [fill=black,circle,inner sep=1pt] at (-7.25,0) {};

    \node [fill=black,circle,inner sep=1pt] at (-7.5,0) {};

    \node [fill=black,circle,inner sep=1pt] at (-7.75,0) {};
            
 \end{tikzpicture}\]
We might then decide to \textit{collapse} the tendril, changing the target space to
\[\begin{tikzpicture}

    \foreach \x in {-28,...,28}
        \node [fill=black,circle,inner sep=1pt] at (\x*0.25,0) {};

    \foreach \x in {-28,...,28}
        \node [fill=black,circle,inner sep=1pt] at (\x*0.25-0.1,1) {};

    \foreach \x in {-28,...,28}
        \node [fill=black,circle,inner sep=1pt] at (\x*0.25-0.075,0.75) {};
            
    \node [fill=black,circle,inner sep=1pt] at (-7-0.025,0.25) {};
    \node [fill=black,circle,inner sep=1pt] at (-7-0.05,0.5) {};
    
    \node [fill=black,circle,inner sep=1pt] at (-6.75,0) {};

    \node [fill=black,circle,inner sep=1pt] at (-6.75-0.025,0.25) {};

    \node [fill=black,circle,inner sep=1pt] at (-6.75-0.05,0.5) {};

    \node [fill=black,circle,inner sep=1pt] at (-6.75-0.075,0.75) {};

    \node [fill=black,circle,inner sep=1pt] at (-8,0) {};

    \node [fill=black,circle,inner sep=1pt] at (-7.25,0) {};

    \node [fill=black,circle,inner sep=1pt] at (-7.5,0) {};

    \node [fill=black,circle,inner sep=1pt] at (-7.75,0) {};
    
    \foreach \x in {-28,...,28}
        \draw [thick] plot coordinates {(\x*0.25,0) (\x*0.25-0.1,1) (\x*0.25- 0.25,0)};

    \draw [thick] plot coordinates {(-8,0) (-7.25,0) };
 \end{tikzpicture}\]
Since we are already committed to the finitely many points we have placed, and what we have said about the distances between them, they must all stay in the same positions in the new target space. But by changing the target space, we can incorporate the tendril into a (new) homeomorphic copy of the unit interval.

We will then introduce new tendrils, which may themselves get collapsed, after which we introduce more new tendrils, and so on. If we collapse all but finitely many tendrils infinitely often, then in the limit our space will be locally connected and homeomorphic to the unit interval. Otherwise, there will be infinitely many tendrils and our space will not be locally connected.

One remaining technical point is to explain why, in the case that we collapse all but finitely many tendrils, we end up with a space homeomorphic to the unit interval. As illustrated above, we end up with a sawtooth function even after collapsing a single tendril. We need to ensure that in the process of introducing and collapsing new tendrils, this sawtooth is not too wild, and instead comes to a nice limit. To do this, when we introduce new tendrils, they will closely follow the existing sawtooth space. In this way, even though the sawtooth gets more and more bumpy, it comes to a limit.

\subsection{\texorpdfstring{Construction for $\bfSigma^0_3$-completeness}{Construction for Sigma03-completeness}}

Consider the $\Sigma^0_3$ set
\[ A = \{ W \subseteq \omega \times \omega \; \mid \; \exists n \;\; |W_n| = \infty\}\]
where, as before, $W_n$ is the $n$th column of $W$.

Given $W$, we must build a Polish space $X_W = \overline{M_W}$ such that $W \in A$ if and only if $X_W$ is homeomorphic to $[0,1]$. The main idea is that whenever an element enters $W_{n}$ we will collapse the $n$th tendril along with every greater tendril. Thus if for every $n$ the set $W_{n}$ is finite then for each $n$ there is some point after which we stop collapsing the $n$th tendril. In that case, we have infinitely many tendrils, and $X_m$ is not homeomorphic to $[0,1]$. Otherwise, if $W_{n}$ is infinite for some $n$, then we collapse the $n$th tendril and every greater tendril infinitely often, and in the end have only finitely many tendrils. So $X_W$ is homeomorphic to $[0,1]$.

For the following, fix the $W$ for which we will construct $X_W$. For simplicity, we suppress the subscript $W$, writing, e.g., $X$ for $X_W$ and $M$ for $M_W$. The construction will be a dynamic construction, and as before we use $W_n[s] = W_n \res s$ for the restriction of $W_n$ to elements $< s$. We can view $W_n[s]$ as the portion of $W_n$ that we have seen by stage $s$, just like if we were doing a many-one reduction in computability theory. (For the computability theorist, one can think of $W_n$ as the set of stages at which a new element enters a c.e.\ set.)

We will work inside $\mathbb{R}^2$ and indeed inside $[-1,1]\times[0,1]$; we call these the $x$ and $y$-coordinates, respectively. To simplify a later part of the construction, we will ensure that there is only one special point for each particular value of the $x$ coordinate. Each special point of $M$ will be a rational tuple.

\subsubsection{The data}

At each stage $s$, we will have the following data:
\begin{enumerate}
    \item The \textit{main line}: A function $\mathfrak{m} : [-1,1] \to [0,1]$ with $\mathfrak{m}(-1) = 0$ and $\mathfrak{m}(1) = 0$.
    \item Infinitely many \textit{tendrils} where the $n$th tendril consists of the data:
    \begin{enumerate}
        \item a \textit{left attachment point} $\ell_n$, given as an $x$-value;
        \item a \textit{right attachment point} $r_n$, given as an $x$-value;
        \item a \textit{top arc} $\mathfrak{t}_n \colon [\ell_n,1] \to [0,1]$;
        \item a \textit{bottom arc} $\mathfrak{b}_n \colon [r_n,1] \to [0,1]$.
    \end{enumerate}
\end{enumerate}
The functions $\mathfrak{m}$, $\mathfrak{t}_n$, and $\mathfrak{b}_n$ should be thought of as representing the points on their graphs, e.g., the actual points in $\mathbb{R}^2$ making up the $n$th tendril will be the union of the graphs of $\mathfrak{t}_n$ and $\mathfrak{b}_n$. These functions will change stage-by-stage, and for the value at stage $s$ we will write $\mathfrak{m}[s]$, $\mathfrak{t}_n[s]$, and $\mathfrak{b}_n[s]$. When the stage is understood, this may be omitted.

The data should satisfy the following conditions:
\begin{enumerate}
    \item Each tendril is attached to the main line on the left side, and is above the main line. For the $n$th tendril, we have:
    \begin{enumerate}
        \item $\ell_n < r_n < 0$: the left attachment point is to the left of the right attachment point, and both are to the left of the midpoint.
        \item $\mathfrak{m}(\ell_n) = \mathfrak{t}_n(\ell_n)$: the top arc of the tendril attaches to the main line at the left attachment point.
        \item $\mathfrak{m}(r_n) = \mathfrak{b}_n(r_n)$: the bottom arc of the tendril attaches to the main line at the right attachment point.
        \item $\mathfrak{t}_n(1) = \mathfrak{b}_n(1)$: the top and bottom arcs of the tendril meet at $x = 1$.
        \item For $x \in (\ell_n,r_n]$ we have $\mathfrak{m}(x) < \mathfrak{t}_n(x)$, and for $x \in (r_n,1)$ we have $\mathfrak{m}(x) < \mathfrak{b}_n(x) < \mathfrak{t}_n(x)$.
    \end{enumerate}
    \item The tendrils are attached to the main line in order from left to right, and are non-intersecting:
    \begin{enumerate}
        \item $\ell_1 < r_1 < \ell_2 < r_2 < \cdots$.
        \item for $m < n$ and values where they are both defined, i.e., $x \in (\ell_n,1)$, we have $\mathfrak{t}_n(x) < \mathfrak{b}_m(x)$.
    \end{enumerate}
\end{enumerate}
Given the above data, the \textit{target space} is the closed subspace which consists of the main line with the portion between $\ell_n$ and $r_n$ replaced by the $n$th tendril. That is, the following points:
\begin{enumerate}
    \item the points $(x,\mathfrak{m}(x))$ for $x \notin (\ell_n,r_n)$ for any $n$.
    \item for each $n$,
    \begin{enumerate}
        \item the points $(x,\mathfrak{t}_n(x))$ for $x \in [\ell_n,1]$;
        \item the points $(x,\mathfrak{b}_n(x))$ for $x \in [r_n,1]$.
    \end{enumerate}
\end{enumerate}
Recall that the target space is \textit{not} homeomorphic to $[0,1]$, as it is the target in $\Pi^0_3$ outcome.

\subsubsection{The construction at each stage}

Recall that we are building a countable metric space $M = M_W$ as a subspace of $\mathbb{R}^2$ with the induced metric. We begin at stage $0$ with $M$ being almost empty, and at each stage we may put some points into $M$. Then the topological space $X = X_W$ will be the completion of $M$.

Let $\{2,p_{n,c},q_{n,c} : n,c \in \mathbb{N}\}$ be a computable listing of all the primes with no repetitions.

\medskip

\noindent \textit{Construction at stage $0$:} We begin the construction at stage $0$ with the main line $\mathfrak{m}(x) = 0$ and the tendrils given by $\ell_n = -\frac{1}{2}-\frac{1}{2n}$, $r_n = -\frac{1}{2}-\frac{1}{2n+1}$, and
\begin{align*}
    \mathfrak{t}_n: [\ell_n,1] &\to [0,1] \\
    x &\mapsto \mathfrak{m}(x) + \frac{1}{2^n} \left(\frac{x-\ell_n}{1-\ell_n}\right)
\end{align*}
and
\begin{align*}
    \mathfrak{b}_n: [r_n,1] &\to [0,1] \\
    x &\mapsto \mathfrak{m}(x) + \frac{1}{2^n} \left(\frac{x-r_n}{1-r_n} \right)
\end{align*}
Put $(-1,0)$ and $(1,0)$ into the metric space $M = M_W$.

\medskip

\noindent \textit{Construction at stage $s+1$:} Suppose that at stage $s+1$, a new element enters $W_{n}[s+1]$ that was not in $W_n[s]$. Then:
\begin{itemize}
    \item First, we collapse all tendrils numbered $n$ onward. We must incorporate the special points on these tendrils that we have already included in $M$ into the main line. We define the new main line $\mathfrak{m}[s+1]$ as follows. For $x \in (-1,\ell_n]$, we keep the main line the same: $\mathfrak{m}[s+1](x) := \mathfrak{m}[s](x)$. Now consider all of the special points $(x,y)$ which we have already put in $M$ which are either on the main line (with $x \in [\ell_n,1]$) or on the $n$th tendril or a greater one. Namely, these are all of the points $(x,y)$ with $x \in [\ell_n,1]$ and $y < \mathfrak{b}_{n-1}[s](x)$. Order these points from left to right as $(x_1,y_1),(x_2,y_2),\ldots,(x_k,y_k)$. Recall that there is no repetition among the $x_i$, as no two special points share the same $x$-coordinate. Now on $[\ell_n,1]$ let $\mathfrak{l}$ be the piecewise linear function which linearly interpolates between the points
    \[ (\ell_n,\mathfrak{m}[s](\ell_n)),(x_1,y_1),(x_2,y_2),\ldots,(x_k,y_k),(1,0).\]
    Let $\mathfrak{m}[s+1](x) = \max(\mathfrak{m}[s](x),\mathfrak{l}(x))$. (As we will see later, for each $i$, $y_i \geq \mathfrak{m}[s](x_i)$, so that each of the points $(x_i,y_i)$ is on the graph of $\mathfrak{m}[s+1]$. Also, $(-1,0)$ and $(1,0)$ were put into $M$ at stage $0$, and so we have $\mathfrak{m}[s+1](-1) = \mathfrak{m}[s+1](1) = 0$.)

    \item Second, we must create new tendrils. Choose new values $\ell_n, r_n, \ell_{n+1}, r_{n+1},\ldots$ such that $r_{n-1} < -\frac {1}{2s+2} < \ell_n < r_n < \ell_{n+1} < r_{n+1} < \cdots < -\frac{1}{2s+4} < 0$ and such that there are no special points between $\ell_n$ and $r_n$, between $\ell_{n+1}$ and $r_{n+1}$, and so on. Let
    \[ g := \sup_{x \in [\ell_n,1]} \mathfrak{b}_{n-1}(x) - \mathfrak{m}[s+1](x).\]
    Think of $g > 0$ as the gap between the main line $\mathfrak{m}$ and the remaining tendrils above $\mathfrak{m}$ (which have not been collapsed).
    Then define
    \begin{align*}
        \mathfrak{t}_n: [\ell_n,1] &\to [0,1] \\
        x &\mapsto \mathfrak{m}[s+1](x) + \frac{g}{2^{n+s}} \left(\frac{x-\ell_n}{1+\ell_n}\right)
    \end{align*}
    and
    \begin{align*}
        \mathfrak{b}_n: [r_n,1] &\to [0,1] \\
        x &\mapsto \mathfrak{m}[s+1](x) + \frac{g}{2^{n+s}} \left(\frac{x-r_n}{1+r_n} \right)
    \end{align*}
    It is somewhat tedious but not hard to see that the new main line and tendrils satisfy all of the conditions required.
\end{itemize}
If no element enters any $W_n$ at stage $s+1$, then we leave everything the same (e.g., $\mathfrak{m}[s+1] = \mathfrak{m}[s]$).

We may now have a new target space (though if no elements entered any $W_n$, then our target space remains the same). We must add new points to our space. For each $n$, let $c_n$ be the number of times that the $n$th tendril has been collapsed. Add to $M$ the following points from the target space:
\begin{enumerate}
    \item for each $x \in (-1,1)$, $x \neq 0$, of the form $t/2^s$ for any $t \in \mathbb{N}$, and such that $x \notin (\ell_n,r_n)$, add $(x,\mathfrak{m}(x))$ to $M$.
    \item for each $x \in [t_n,1)$, $x \neq 0$, of the form $t/p_{n,c_n}^s$ for any $t \in \mathbb{N}$, add $(x,\mathfrak{t}_n(x))$ to $M$.
    \item for each $x \in [b_n,1)$, $x \neq 0$, of the form $t/q_{n,c_n}^s$ for any $t \in \mathbb{N}$, add $(x,\mathfrak{b}_n(x))$ to $M$.
\end{enumerate}
Note that $p_{n,c_n}$ and $q_{n,c_n}$ are unique primes not only for the $n$th tendril, but unique for this instance of the $n$th tendril (i.e., different than the primes used for previous instances of the $n$th tendril that have now been collapsed). Also, for each point $(x,y)$ that we have added to $M$, $y \geq \mathfrak{m}[s+1](x)$. Thus the main line is a lower bound on all of the points in $M$.

\medskip

This completes the construction.

\subsection{Verification}

Fix $W$ for which we have constructed $X = W$. We begin with a list of properties of the construction.
\begin{remark}\label{rem:verifyconstruct}
We have the following:
\begin{enumerate}
    \item If at stage $s$ we collapse the $n$th, $n+1$st, \ldots, tendril, then $\mathfrak{t}_n[s-1] \geq \mathfrak{l}$ where $\mathfrak{l}$ is as defined at stage $s$.

    Note that $\mathfrak{t}_n[s-1]$ is a line, and $\mathfrak{l}$ is a linear interpolation of points all of which are on or below $\mathfrak{t}_n[s-1]$.

    \item At any stage $s$, the first tendril is above the second tendril, the second is above the third, and so on. All of the tendrils are above the main line.

    Both points are easy to see by following the construction. The point of the gap $g$ is to ensure that this is the case; also, note that the line $\mathfrak{l}$ is under any of the tendrils that are not collapsed.

    \item If a special point $p$ is put in $M$ at stage $s$, then $p$ is above the main line $\mathfrak{m}[s]$.

    This is because all of the tendrils are above the main line, and the points that are added to $M$ are either on a collapsed tendril or on the main line.
    
    \item For each fixed $x$, the sequence $\mathfrak{m}[s](x)$ is increasing.

    We define $\mathfrak{m}[s+1](x) = \max(\mathfrak{m}[s](x),\mathfrak{l}(x))$ or $\mathfrak{m}[s+1](x) = \mathfrak{m}[s](x)$ depending on what happens at stage $s+1$.

    \item If, at a stage $s$, a special point $p = (x,y)$ is in $M$ and is on the main line, then it is on the main line at any greater stage. Moreover, from stage $s$ on, $x$ is never between any pair of left and right endpoints $\ell_i$ and $r_i$.

    At stage $s$, suppose that we collapse the $n$th, $n+1$st, \ldots, tendrils. If $p = (x,y)$ with $x \in [-1,\ell_n]$ then $\mathfrak{m}[s+1](x) = \mathfrak{m}[s](x) = y$ and so $p$ is still on the main line. Otherwise, if $x \in [\ell_n,1]$, then $p$ is on the line $\mathfrak{l}$ defined at stage $s$, and $\mathfrak{m}[s+1](x) = \max(\mathfrak{m}[s](x),\mathfrak{l}(x))$ but $\mathfrak{m}[s](x) = \mathfrak{l}(x) = y$.

    For the moreover clause, note that if $p$ is added directly onto the main line, then it is not between any pair of left and right endpoints. If on the other hand $p$ is put into $M$ as part of a tendril and becomes part of the main line due to collapse, then any pair of left and right endpoints containing it are redefined. Finally, in either case, when any pair of left and right endpoints are redefined at any later stage, they do not contain between them any special points of $M$.

    \item If the $n$th tendril was first defined at stage $t$, and was never collapsed between stage $t$ and stage $s > t$, then $\mathfrak{t}_n(x) - \mathfrak{m}[s](x) \leq 2^{-t}$.

    At stage $t$, we would have defined $\mathfrak{t}_n(x) = \mathfrak{t}_n[t](x)$ such that $\mathfrak{t}_n(x) - \mathfrak{m}[t](x) \leq 2^{-t}$. But $\mathfrak{m}[t](x) \leq \mathfrak{m}[s](x) \leq \mathfrak{t}_n(x)$.

    \item If, at stage $s$, a new element enters $W_n$ for the $k$th time, then $\Vert \mathfrak{m}[s] - \mathfrak{m}[s+1] \Vert \leq 2^{-n-k}$.

    \item If $\ell_n$ or $r_n$ was defined at stage $s$, then $\ell_n < r_n < -\frac{1}{2s+2}$.
    
    This is because at stage $0$ we initially define $\ell_n < r_n < -\frac{1}{2}$ and if, at stage $s+1$ we redefine them, it is with $-\frac {1}{2s+2} < \ell_n < r_n < \ell_{n+1} < r_{n+1} < \cdots < -\frac{1}{2s+4} < 0$.

\end{enumerate}
\end{remark}

First, we will argue that the main line comes to a limit $\mathfrak{m}_\infty$.

\begin{lemma}
    The limit $\mathfrak{m}_{\infty}(x) = \lim_{s \to \infty} \mathfrak{m}[s](x)$ exists, and $\mathfrak{m}_{\infty} \colon [-1,1]\to[0,1]$ is continuous. $\mathfrak{m}_\infty(-1) = \mathfrak{m}_\infty(1) = 0$.
\end{lemma}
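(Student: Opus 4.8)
The plan is to show that for each fixed $x$, the sequence $\mathfrak{m}[s](x)$ is Cauchy, that the limit function is continuous, and that the boundary values are preserved. The key tool will be item (7) of Remark \ref{rem:verifyconstruct}, which controls how much the main line moves at each stage.

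First I would establish pointwise convergence. By item (4), the sequence $\mathfrak{m}[s](x)$ is increasing in $s$, and it is bounded above by $1$ since all functions map into $[0,1]$. Hence the limit $\mathfrak{m}_\infty(x) = \lim_s \mathfrak{m}[s](x)$ exists for every $x$. However, monotone pointwise convergence alone does not give continuity of the limit, so I would instead extract a \emph{uniform} bound on the total variation of the sequence over all $x$ simultaneously. The main line changes only when some element enters some $W_n$. By item (7), if at stage $s$ a new element enters $W_n$ for the $k$th time, then $\Vert \mathfrak{m}[s] - \mathfrak{m}[s+1]\Vert_\infty \leq 2^{-n-k}$. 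Summing over all $n$ and all $k$ gives
\[ \sum_{n \geq 1} \sum_{k \geq 1} 2^{-n-k} = \left(\sum_{n \geq 1} 2^{-n}\right)\left(\sum_{k \geq 1} 2^{-k}\right) = 1 < \infty. \]
Since each stage $s+1$ at which the main line changes corresponds to a unique pair $(n,k)$ (the entry into $W_n$ is its $k$th entry), the sum $\sum_s \Vert \mathfrak{m}[s] - \mathfrak{m}[s+1]\Vert_\infty$ is bounded by this convergent series. Therefore the sequence $(\mathfrak{m}[s])_s$ is Cauchy in the sup-norm on $C([-1,1])$, hence converges \emph{uniformly}.

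Uniform convergence is the crux: since each $\mathfrak{m}[s]$ is continuous (it is built from piecewise-linear interpolations and $\mathfrak{m}[0]\equiv 0$, and the max of two continuous functions is continuous), and the convergence is uniform, the limit $\mathfrak{m}_\infty$ is continuous by the standard uniform-limit theorem. The uniform bound also confirms $\mathfrak{m}_\infty$ maps $[-1,1]$ into $[0,1]$, since each $\mathfrak{m}[s]$ does. Finally, for the boundary values, I would note that $\mathfrak{m}[s](-1) = \mathfrak{m}[s](1) = 0$ for every $s$: this holds at stage $0$ and is preserved at every collapse stage because the interpolation always includes the points $(-1,0)$ and $(1,0)$ as endpoints, and for $x = -1$ we keep $\mathfrak{m}[s+1](-1) = \mathfrak{m}[s](-1)$ while at $x=1$ the interpolation passes through $(1,0)$. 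Taking limits gives $\mathfrak{m}_\infty(-1) = \mathfrak{m}_\infty(1) = 0$.

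The main obstacle is establishing \emph{uniform} rather than merely pointwise convergence, since continuity of the limit genuinely requires it; this is exactly what item (7) is designed to provide, and the only real work is checking that the changes at distinct stages correspond to distinct $(n,k)$ pairs so that the geometric-series bound applies. A minor secondary point to verify is that each $\mathfrak{m}[s]$ really is continuous, which follows from the explicit piecewise-linear construction together with the observation that taking a pointwise max with $\mathfrak{l}$ preserves continuity.
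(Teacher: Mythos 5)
Your proof is correct and follows essentially the same route as the paper: monotonicity plus boundedness gives pointwise existence, Remark 5.1(7) gives uniform convergence, the uniform limit theorem gives continuity, and the stagewise boundary values $\mathfrak{m}[s](-1)=\mathfrak{m}[s](1)=0$ give the last claim. If anything, your summable-series bound $\sum_{n,k}2^{-n-k}<\infty$ justifies the Cauchy step slightly more sharply than the paper's observation that for each $k$ only finitely many stages satisfy $\Vert \mathfrak{m}[s]-\mathfrak{m}[s+1]\Vert>2^{-k}$.
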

\begin{proof}
    For a fixed $x$, the sequence $(\mathfrak{m}[s](x))_{s \in \mathbb{N}}$ is increasing and bounded above (by $1$), and hence converges. Thus $\mathfrak{m}[s]$ converges pointwise to a function $\mathfrak{m}_\infty \colon [-1,1] \to [0,1]$. Moreover, by Remark \ref{rem:verifyconstruct} (7), for each $k$ there can only be finitely many stages $s$ with $\Vert \mathfrak{m}[s] - \mathfrak{m}[s+1] \Vert > 2^{-k}$. Thus the sequence of functions $(\mathfrak{m}[s])$ is a Cauchy sequence under the uniform metric, and so converges uniformly to $\mathfrak{m}_\infty$ which is therefore a continuous function. The final claim of the lemma follows because for each $s$, $\mathfrak{m}[s](-1) = \mathfrak{m}[s](1) = 0$.
\end{proof}

\subsubsection{\texorpdfstring{The case $W \in A$}{The case W in A}}

Suppose that $W \in A$. We must argue that $X_W$ is homeomorphic to $[0,1]$. Let $n$ be least such that $W_n$ is infinite, and let $s^*$ be a stage after which no element ever enters $W_1,\ldots,W_{n-1}$.

For $i = 1,\ldots,n-1$, let $\ell_i,r_i,\mathfrak{b}_i,\mathfrak{t}_i$ be the data of the $i$th tendril at stage $s^*$. The tendril is never collapsed at any later stage, so this is the final data associated with the tendril. Let $Y$ consist of:
\begin{enumerate}
    \item the points $(x,\mathfrak{m}_\infty(x))$ for $x \in [-1,\ell_1]$;
    \item for each $1 \leq i < n-1$,
    \begin{enumerate}
        \item the points $(x,\mathfrak{m}_\infty(x))$ for $x \in [r_i,\ell_{i+1}]$;
    \end{enumerate}
    \item the points $(x,\mathfrak{m}_\infty(x))$ for $x \in [r_{n-1},1]$;
    \item for each $i < n$,
    \begin{enumerate}
        \item the points $(x,\mathfrak{t}_i(x))$ for $x \in [\ell_i,1]$;
        \item the points $(x,\mathfrak{b}_i(x))$ for $x \in [r_i,1]$.
    \end{enumerate}
\end{enumerate}
Each of these is the graph of a continuous function over a closed interval and hence homeomorphic to $[0,1]$. Thus $Y$ is the union of finitely many closed arcs, which are connected end-to-end and otherwise disjoint, and so $Y$ is homeomorphic to $[0,1]$. Now we must argue that $X = X_W$ is equal to $Y$, that is, that $M = M_W$ is a subset of $Y$ and that $\overline{M} = Y$. 

First we argue that $M \subseteq Y$. Suppose that a point $(x,y)$ is put into $M$ at stage $s$. There are three possible ways that it might be put into $M$. If it is put into the main line, then by Remark \ref{rem:verifyconstruct} (5) $(x,y)$ will be part of the main line at every later stage (and will never be between any pair of left and right endpoints). If $(x,y)$ is added to $M$ as part of the $i$th tendril, then either that tendril is never later collapsed, in which case it remains in that tendril, or the tendril is at some later point collapsed to the main line, and $(x,y)$ remains in the main line at all stages after that.

Now we will argue that the closure $X = \overline{M}$ covers all of $Y$. Given $(x,y) \in Y$ and $\epsilon > 0$, we break into cases. Suppose that $(x,y)$ is a point of the form $(x,\mathfrak{m}_\infty(x))$ on the main line, with $x \notin (\ell_i,r_i)$ for any $i \leq n$. Choose $\delta > 0$ and a stage $s$ such that
\begin{enumerate}
    \item if $|x-x'| < \delta$ then $|\mathfrak{m}_\infty(x)-\mathfrak{m}_\infty(x')| < \epsilon/3$;
    \item at stage $s$, either $(x-\delta,x] $ or $[x,x+\delta)$ is disjoint from any interval $(\ell_i,r_i)$;
    \item $2^{-s} < \delta$;
    \item $2^{-s} < \epsilon/3$;
    \item $\Vert \mathfrak{m}_\infty - \mathfrak{m}[s] \Vert < \epsilon / 3$. 
\end{enumerate}
It takes some argument to see that we can do this. We can choose $\delta$ satisfying (1) since $\mathfrak{m}_\infty$ is uniformly continuous (as it is continuous on a compact set). Moreover, (1) is maintained if we shrink $\delta$. We obtain (2) as follows. If $x \geq 0$, then (2) is clear for $[x,x+\delta)$. Otherwise, if $x < 0$, we argue as follows. By shrinking $\delta$, we may assume that for any $s \geq s^*$ (2) holds restricted solely to $i \leq n$. By taking $s$ sufficiently large that $x+\delta < -\frac{1}{2s}$, and moreover taking $s$ to be a sufficiently large stage such that an element enters $W_n$ at stage $s$, by Remark \ref{rem:verifyconstruct} (8) at stage $s$ we have that $x + \delta < -\frac{1}{2s} < \ell_n < r_n < \cdots$ and moreover this remains true at any stage $s' \geq s$. Increasing $s$ further, we can obtain (3), (4), and (5).

Then at each stage $s$, there is a point of the form $x' = t/2^s$ within $2^{-s}$ of $x$ and not in any interval $(\ell_i,r_i)$. Then $(x',\mathfrak{m}[s](x'))$ is added to $M$ at stage $s$. Since $|x-x'| < 2^{-s} < \delta$, $|\mathfrak{m}(x)-\mathfrak{m}(x')| < \epsilon/3$. Also, $|\mathfrak{m}(x')-\mathfrak{m}[s](x')| < \epsilon/3$. Thus the distance between $(x,\mathfrak{m}(x))$ and $(x',\mathfrak{m}[s](x'))$ is at most \[|x-x'|+|\mathfrak{m}(x)-\mathfrak{m}[s](x')| \leq |x-x'|+|\mathfrak{m}(x)-\mathfrak{m}(x')|+|\mathfrak{m}(x')-\mathfrak{m}[s](x')| < \epsilon/3+\epsilon/3+\epsilon/3 = \epsilon.\]

If $(x,y)$ is a point of the form $(x,\mathfrak{t}_i(x))$ or $(x,\mathfrak{b}_i(x))$, $i < n$, a simpler version of the above argument works, where the stage $s$ is taken to be large enough that the $i$th tendril has stabilized. (Also, $2^{-s}$ is replaced by $p_{i,c_i}^{-s}$ or $p_{i,c_i}^{-s}$.)

\begin{remark}
    The fact that $\mathfrak{m}(-1)=\mathfrak{m}(1)=0$ means that the homeomorphism $[0,1] \to X$ maps $0$ to $(-1,0)$ and $1$ to $(1,0)$.
\end{remark}

\subsubsection{\texorpdfstring{The case $W \notin A$}{The case W not in A}}

Suppose that $W \notin A$, so that for each $n$, $W_n$ is finite. We will argue that $X_W$ is not locally connected, whence it is not homeomorphic to $[0,1]$.

For each $n$, there is a stage $s_n$ after which  no element enters $W_{1},\ldots,W_{n}$. Thus, after stage $s_n$, the $n$th tendril remains the same. It is easy to see that the $n$th tendril---that is, the points $(x,\mathfrak{t}_n(x))$ for $x \in [\ell_n,1]$ and $(x,\mathfrak{b}_n(x))$ for $x \in [r_n,1]$---are in $X = X_W$. There are infinitely many tendrils and we argue that $X$ is not locally connected.

Indeed, let $B$ be any open set containing $(1,0)$ and of diameter at most, say, $1/2$. Then for sufficiently large $n$, say $n \geq N$, the tip of the $n$th tendril, $(1,\mathfrak{t}_n(1)) = (1,\mathfrak{b}_n(1))$ is contained within $B$. However the left and right endpoints of the tendril lie outside of $B$ since the diameter of $B$ is at most $1/2$. The tip of each tendril ($n \geq N$) is in a different connected component of $B$. Thus $B$ is not connected.

\subsection{\texorpdfstring{The construction for $\bfPi^0_4$-completeness}{The construction for Pi04-completeness}}\label{sec:pi4}

Let $B$ be the $\bfPi^0_4$-Wadge-complete set
\[ B = \{ U \in \omega \times \omega \times \omega \; \mid \; \forall m \;\; U_m \in A\}.\]
Given $U$, for each $m$, let $X_{U_m}$ be the metric space produced by the construction just given when applied to $U_m$; that is, $X_{U_m}$ is not locally connected if $U_m \notin A$, and $X_{U_m}$ is homeomorphic to $[0,1]$ if $U_m \in A$. Moreover, in the latter case, the homeomorphism maps $0 \mapsto (-1,0)$ and $1 \mapsto (1,0)$. We may assume, by an easy rescaling, that $X_{U_m} \subseteq [0,1]\times[0,1]$ with $0 \mapsto (0,0)$ and $1 \mapsto (0,1)$.

Let $Y_U$ be the space which contains, for each $m$, a copy of $X_{U_m}$ which is scaled down by a factor of $1/2^m$ and shifted so that the point corresponding to $(1,0)$ in $X_{U_m}$ matches up with the point corresponding to $(0,0)$ in $X_{U_{m+1}}$. Moreover, we ``twist'' each subsequent copy in a new direction so that there is no other intersection. Without the twists, this looks like:
\[ \begin{tikzpicture}
    \draw [dotted] plot coordinates {(-8,0) (-8,4) (-4,4) (-4,0) (-8,0)};
    \draw [dotted] plot coordinates {(-4,0) (-4,2) (-2,2) (-2,0) (-4,0)};
    \draw [dotted] plot coordinates {(-2,0) (-2,1) (-1,1) (-1,0) (-2,0)};
    \draw [dotted] plot coordinates {(-1,0) (-1,0.5) (-0.5,0.5) (-0.5,0) (-1,0)};
    \draw [dotted] plot coordinates {(-0.5,0) (-0.5,0.25) (-0.25,0.25) (-0.25,0) (-0.5,0)};
    \draw [dotted] plot coordinates {(-0.25,0) (-0.25,0.125) (-0.125,0.125) (-0.125,0) (-0.25,0)};

    \node [fill=black,circle,inner sep=1pt] at (-8,0) {};
    \node [fill=black,circle,inner sep=1pt] at (-4,0) {};
    \node [fill=black,circle,inner sep=1pt] at (-2,0) {};
    \node [fill=black,circle,inner sep=1pt] at (-1,0) {};
    \node [fill=black,circle,inner sep=1pt] at (-0.5,0) {};
    \node [fill=black,circle,inner sep=1pt] at (-0.25,0) {};
    \node [fill=black,circle,inner sep=1pt] at (-0.125,0) {};
    \node [fill=black,circle,inner sep=1pt] at (0,0) {};

    \draw [] plot [smooth, tension=.7] coordinates {(-8,0)(-7,3) (-6,2) (-5,2.5) (-4,0)};
    \draw [] plot [smooth, tension=.7] coordinates {(-4,0)(-3.5,1.7) (-3,1.5) (-2.5,0.5) (-2,0)};
    \draw [] plot [smooth, tension=.7] coordinates {(-2,0)(-1.5,0.7) (-1,0)};
    \draw [] plot [smooth, tension=.7] coordinates {(-1,0)(-0.75,0.3) (-0.73,0.4)  (-0.5,0)};

    \node at (-6,-0.5) {$X_{U_0}$};
    \node at (-3,-0.5) {$X_{U_1}$};
    \node at (-1.5,-0.5) {$X_{U_2}$};

    \node at (-0.5,-0.5) {$\cdots$};
\end{tikzpicture}\]
We work in $c_0$, the space of sequences in $\mathbb{R}^{\mathbb{N}}$ which converge to $0$. Let $\mathbf{u}$ be $(1,0,0,0,\ldots)$ and let $\mathbf{w}_1,\mathbf{w}_2,\ldots$ be a list of the other standard basis elements of $c_0$. These $\mathbf{w}_i$ will be the direction for the twists. For each $m$, $Y_U$ will contain the following points:
\[ \left\{ \left(\frac{1}{2} + \cdots + \frac{1}{2^{m-1}} + \frac{1}{2^m} x\right) \mathbf{u} + \frac{1}{2^m}y \mathbf{w}_m : (x,y) \in X_{U_m}\right\} \]
$Y_U$ will also contain $(1,0)$ as the limit point of the above points. It is not hard to see that $Y_U$ is homeomorphic to $[0,1]$ if and only if each $X_{U_m}$ is homeomorphic to $[0,1]$. Thus $Y_U$ is homeomorphic to $[0,1]$ if and only if $U \in B$, as desired.

\begin{remark}\label{rem:startend}
    In the next section, we will use the fact that $Y_U$ always includes $(0,0,0,\ldots)$ and $(1,0,0,\ldots)$ and, in the case that it is homeomorphic to $[0,1]$, then these are the two endpoints. 
\end{remark}

\section{\texorpdfstring{The circle $S^1$}{The circle}}

We now show that $\HCopies(S^1)$ is $\Pi^0_4$. The strategy is similar to that for $[0,1]$, and we reuse conditions such as $\compact$ and $\conn$ from Section \ref{sec:three}. Our new condition describes the circle by saying that any four points can be arranged in order around the circle, where we use the same ideas of betweenness as before to identify the order in which they are arranged.

\begin{definition}
     $X = \overline{(M,d)}$ satisfies \twodisconn if for any special points $x_0,x_1,x_2,x_3$ (where we consider $0,1,2,3 \in \mathbb{Z} / 4 \mathbb{Z}$) there is some reordering of them such that
     \begin{enumerate}
         \item for all $i \in \mathbb{Z}/4\mathbb{Z}$ and $\delta > 0$ there is $\rho < \delta$ such that for all $\epsilon$ there is an $\epsilon$-path $u_1 = x_i,\ldots,u_n = x_{i+1}$ with no $u_j$ in $\overline{B}_\rho(x_{i+2}) \cup \overline{B}_\rho(x_{i+3})$.\footnote{Recall that we use $\overline{B}_\rho(x)$ for the closed ball rather than the closure of the open ball.}
         \item for all $i \in \mathbb{Z}/4\mathbb{Z}$ and $\delta > 0$ there is $\epsilon$ such that for any $\epsilon$-path $u_1 = x_i,\ldots,u_n = x_{i+2}$, for some $j$ either $u_j \in B_\delta(x_{i+1})$ or $u_j \in B_\delta(x_{i-1})$.
     \end{enumerate}
\end{definition}

Note that this is $\Pi^0_4$.

\begin{lemma}
    If $X = \overline{(M,d)}$ is homeomorphic to $S^1$, then it satisfies \twodisconn (as well as \nondegen, \compact, \conn, and \localconn).
\end{lemma}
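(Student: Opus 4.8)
The plan is to dispatch the four ``easy'' conditions first and then concentrate on \twodisconn. Since $S^1$ is a non-degenerate, compact, connected, locally connected metric space, the space $X \cong S^1$ is as well, so \nondegen, \compact, \conn, and \localconn all hold immediately by the corresponding lemmas of Section~\ref{sec:three}: \compact holds because $X$ is totally bounded, \conn holds because $X$ is compact and connected, and \localconn holds because $X$ is compact and locally connected. The real content is \twodisconn, and here I would fix a homeomorphism $h \colon X \to S^1$ and use it \emph{only} to read off the cyclic order of points. Given special points $x_0,x_1,x_2,x_3$, their images $h(x_0),\ldots,h(x_3)$ are four distinct points of the circle, and I reorder the $x_j$ so that $h(x_0),h(x_1),h(x_2),h(x_3)$ occur in this cyclic order around $S^1$. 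All distances, balls, and $\epsilon$-paths are then measured in the given metric $d$ on $X$; the homeomorphism serves only to locate arcs and to know how removing points separates the circle.

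For condition (1), fix adjacent indices $i,i+1$ and $\delta > 0$. Pulling back along $h$, there is an arc $\gamma$ in $X$ from $x_i$ to $x_{i+1}$ whose image is the arc of $S^1$ between $h(x_i)$ and $h(x_{i+1})$ containing neither $h(x_{i+2})$ nor $h(x_{i+3})$. Then $\gamma$ is compact and disjoint from $\{x_{i+2},x_{i+3}\}$, so $D := d(\gamma,\{x_{i+2},x_{i+3}\}) > 0$, and I set $\rho$ to be a suitably small fraction of $\min(\delta,D)$. For any $\epsilon > 0$, density of the special points lets me approximate $\gamma$ by an $\epsilon$-path $x_i = u_1,\ldots,u_n = x_{i+1}$ all of whose interior points lie within $\rho$ of $\gamma$; by the triangle inequality every $u_j$ is then at distance greater than $\rho$ from both $x_{i+2}$ and $x_{i+3}$, so the path avoids $\overline{B}_\rho(x_{i+2}) \cup \overline{B}_\rho(x_{i+3})$, as required.

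For condition (2), fix opposite indices $i,i+2$ and $\delta > 0$. It suffices to treat small $\delta$, since for larger $\delta$ one may reuse the $\epsilon$ obtained for any smaller value (a path meeting the smaller balls meets the larger ones). For $\delta$ small, the sets $\overline{B}_\delta(x_{i+1})$ and $\overline{B}_\delta(x_{i+3})$ are disjoint and miss $x_i,x_{i+2}$, and, reading the situation through $h$, removing the two corresponding open balls disconnects $X$ into two arcs $A$ and $A'$ with $x_i \in A$ and $x_{i+2} \in A'$: each of the two arcs of $S^1$ between $h(x_i)$ and $h(x_{i+2})$ is cut exactly once, one by the ball around $x_{i+1}$ and the other by the ball around $x_{i+3}$. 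These two arcs are disjoint compact sets, so $D' := d(A,A') > 0$; choose $\epsilon < D'$. Any $\epsilon$-path from $x_i$ to $x_{i+2}$ avoiding $B_\delta(x_{i+1}) \cup B_\delta(x_{i+3})$ would have every point in $A \sqcup A'$, and since consecutive points are within $\epsilon < D'$ they would all lie in the component of $u_1 = x_i$, namely $A$, contradicting $u_n = x_{i+2} \in A'$. Hence every such $\epsilon$-path meets $B_\delta(x_{i+1})$ or $B_\delta(x_{i+3}) = B_\delta(x_{i-1})$.

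The individual steps are routine; the point requiring the most care is the separation argument in (2)---specifically, isolating the reduction to small $\delta$ and verifying that the two balls genuinely cut the circle into exactly the two arcs that separate $x_i$ from $x_{i+2}$. Everything else is a standard compactness-plus-density manipulation, and the only role of the homeomorphism to $S^1$ is to supply the cyclic order and the arc/separation structure, after which all estimates are carried out in the intrinsic metric $d$.
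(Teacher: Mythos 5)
Your proof is correct and follows essentially the same route as the paper's: reduce the first four conditions to the lemmas of Section~\ref{sec:three}, order the four points cyclically via the homeomorphism, approximate the avoiding arc by an $\epsilon$-path for (1), and use a positive-distance separation of the complement of the two balls for (2). One small caveat: metric balls in a presentation need not be connected (as the paper notes), so the complement $X - B_\delta(x_{i+1}) - B_\delta(x_{i-1})$ need not literally consist of ``two arcs''; but your argument only needs that this compact set splits into two disjoint relatively clopen pieces separating $x_i$ from $x_{i+2}$ --- namely its intersections with the preimages of the two open arcs of $S^1$ minus $h(x_{i+1})$ and $h(x_{i-1})$ --- so the positive-distance estimate and the rest of the argument go through unchanged.
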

\begin{proof}
    All of these properties other than \twodisconn follow from Section \ref{sec:three}. So we now verify \twodisconn. Given any four points on $S^1$, list them in order around the circle as $x_0,x_1,x_2,x_3$.

    For (1), given $i$ and $\delta > 0$, choose $\rho < \delta$ as follows. Let $C$ be the segment of $S^1$ between $x_i$ and $x_{i+1}$ not containing $x_{i+2}$ and $x_{i+3}$. Choose $\rho < \delta$ such that $2 \rho$ is smaller than the distance between this segment and either of $x_{i+2}$ or $x_{i+3}$. Then for any $\epsilon$, we can choose an $\epsilon$-path approximating the arc $C$ between $x_i$ and $x_{i+1}$ (in the sense that each element of the $\epsilon$-path is within $\rho$ of $C$).

    For (2), given $i$ and $\delta > 0$, if either $x_i$ or $x_{i+1}$ are in $B_\delta(x_{i-1})$ or $B_\delta(x_{i+1})$, then (2) holds trivially. Otherwise, $X - B_\delta(x_{i+1}) - B_\delta(x_{i-1})$ is closed and disconnected, and $x_i$ and $x_{i+2}$ are in different connected components. Choosing $\epsilon$ to be smaller than the distance between clopen (in $X - B_\delta(x_{i+1}) - B_\delta(x_{i-1})$) separating sets containing each of them, there is no $\epsilon$-path from $x_i$ to $x_{i+2}$ in $X - B_\delta(x_{i+1}) - B_\delta(x_{i-1})$.
\end{proof}

\begin{lemma}\label{lem:circle}
    If $X = \overline{(M,d)}$ satisfies \compact, \conn, \localconn, and \twodisconn then for any four special points, there is some ordering $x_0,x_1,x_2,x_3$ and arcs $x_0 \to x_1 \to x_2 \to x_3 \to x_0$ homeomorphic to $S^1$. Moreover, any path from $x_i$ to $x_{i+2}$ must pass through $x_{i+1}$ or $x_{i-1}$.
\end{lemma}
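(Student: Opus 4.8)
The plan is to fix, via \twodisconn, the reordering $x_0,x_1,x_2,x_3$ it promises, and then to produce four arcs $A_0,A_1,A_2,A_3$, with $A_i$ running from $x_i$ to $x_{i+1}$ and missing the other two points, whose union is the desired copy of $S^1$. Throughout I will use that $X$ is a Peano continuum, so by the results of Section~\ref{sec:three} it is path-connected and locally path-connected. Consequently, for any finite set $F$, the open set $X \setminus F$ is locally path-connected, and each of its connected components is open, connected and locally path-connected, hence arc-connected (a path-connected Hausdorff space is arc-connected). These components are what will supply the arcs.

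The first real step is to prove the ``Moreover'' clause, since it is the engine for everything else: any continuous path from $x_i$ to $x_{i+2}$ meets $\{x_{i+1},x_{i-1}\}$. Suppose a path $P$ avoided both $x_{i+1}$ and $x_{i-1}=x_{i+3}$. Its image is compact, hence lies at some positive distance $d_0$ from $\{x_{i+1},x_{i+3}\}$; approximating $P$ by $\epsilon$-paths whose vertices lie within $\epsilon$ of the image of $P$, for $\epsilon<d_0/2$ these $\epsilon$-paths avoid $B_{d_0/2}(x_{i+1})$ and $B_{d_0/2}(x_{i+3})$. Taking $\delta=d_0/2$ in clause~(2) of \twodisconn produces an $\epsilon$ forcing every $\epsilon$-path from $x_i$ to $x_{i+2}$ to come within $\delta$ of $x_{i+1}$ or $x_{i+3}$; choosing the approximating path with this $\epsilon$ is the contradiction.

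Next I would extract the arcs from clause~(1). Fix $i$ and, using clause~(1), choose $\rho>0$ smaller than $\delta$ and smaller than all pairwise distances among the four points; then for every $\epsilon$ there is an $\epsilon$-path from $x_i$ to $x_{i+1}$ all of whose points (and both endpoints) lie in the compact set $K=X\setminus(B_\rho(x_{i+2})\cup B_\rho(x_{i+3}))$. A standard compactness argument shows $x_i$ and $x_{i+1}$ lie in the same connected component of $K$: if they were in different quasicomponents, a clopen separation of $K$ would place them at positive distance, blocking $\epsilon$-paths for small $\epsilon$, and in a compact Hausdorff space quasicomponents coincide with components. Since $K\subseteq X\setminus\{x_{i+2},x_{i+3}\}$, the two points lie in a single (arc-connected) component of $X\setminus\{x_{i+2},x_{i+3}\}$, yielding an arc $A_i$ from $x_i$ to $x_{i+1}$ avoiding $x_{i+2}$ and $x_{i+3}$. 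I expect this passage from $\epsilon$-path connectivity to a genuine arc to be the main technical obstacle, as it is where compactness and local (path-)connectedness must be combined carefully.

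Finally I would assemble the circle, and here the Moreover clause does all the work. If a point $p$ lay in $A_i\cap A_{i+1}$ with $p\neq x_{i+1}$, then $p$ is distinct from all four $x_j$ (each of $A_i,A_{i+1}$ avoids two of them, and $p\neq x_{i+1}$ by assumption), so $p$ is interior to both arcs; concatenating the subarc of $A_i$ from $x_i$ to $p$ with the subarc of $A_{i+1}$ from $p$ to $x_{i+2}$ gives a path from $x_i$ to $x_{i+2}$ which, by injectivity of the two arcs, avoids both $x_{i+1}$ and $x_{i+3}$, contradicting the Moreover clause. The identical concatenation argument shows $A_0\cap A_2=A_1\cap A_3=\varnothing$. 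Thus the four arcs meet exactly at their consecutive shared endpoints and are otherwise disjoint, so $A_0\cup A_1\cup A_2\cup A_3$ is a simple closed curve passing through $x_0,x_1,x_2,x_3$ in this cyclic order; a homeomorphism onto $S^1$ is then built by the pasting lemma, mapping each quarter of $S^1$ onto the corresponding $A_i$.
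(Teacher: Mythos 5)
Your proof is correct, and it differs from the paper's in the step that carries the real weight: showing the four arcs meet only at their shared endpoints. The paper, faced with an unwanted intersection point $y$ of two arcs, applies \twodisconn to a \emph{new} quadruple built from $y$ (perturbing $y$ to a nearby special point $y'$ via local path-connectedness when $y$ is not special) and derives a contradiction with clause (2) for that quadruple, leaving the remaining ``combinatorial possibilities'' to the reader. You instead prove the Moreover clause first, directly from clause (2) applied to the original quadruple (compactness of the path image gives a positive distance, then approximate by $\epsilon$-paths of special points), and then obtain disjointness purely by concatenating subarcs to manufacture a forbidden path from $x_i$ to $x_{i+2}$ avoiding $x_{i+1}$ and $x_{i-1}$. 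Your route buys a cleaner case analysis: it never needs the special/non-special dichotomy or the perturbation of $y$, and it only ever invokes \twodisconn for the single quadruple under consideration. You are also more explicit than the paper about extracting genuine arcs from clause (1) --- the quasicomponent argument in the compact set $K = X \setminus (B_\rho(x_{i+2}) \cup B_\rho(x_{i+3}))$ followed by passage to an arc-connected component of $X \setminus \{x_{i+2},x_{i+3}\}$ --- which the paper compresses into a single sentence. Both arguments rest on the same background facts (approximating continuous paths by $\epsilon$-paths through special points, and arc-connectedness of open connected subsets of a Peano continuum), so the difference is one of organization rather than of underlying ideas, but putting the Moreover clause first, as you do, makes the disjointness step essentially mechanical.
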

\begin{proof}
    Since $X = \overline{(M,d)}$ satisfies \compact, \conn, and \localconn it is compact, connected, locally connected, path connected, and locally path connected. Given any four special points, order them as $x_0,x_1,x_2,x_3$ to satisfy \twodisconn. By (1), for each $i$, $x_i$ and $x_{i+1}$ are in the same connected component, and so there is a path $C_{i}$ from $x_i$ to $x_{i+1}$.

    Next we must argue that any two of these arcs do not intersect except possibly at their starting and ending points. If not, and, say, $C_i$ and $C_{i+1}$ intersect, let $y$ be the point of intersection. (There are other combinatorial possibilities for the intersection, but a similar argument works in each case.) If $y$ was a special point, then $y,x_{i+1},x_{i-1},x_{i+2}$ will not satisfy (2) of \twodisconn, because there are paths between any two of $y,x_{i+1},x_{i-1}$ avoiding the other and $x_{i+2}$. If $y$ is not a special point, then because $X$ is locally path connected we can choose some special point $y'$ sufficient close to $y$ such that there is a path from $y$ to $y'$ disjoint from any of the $x$'s and disjoint from $C_{i-1}$ and $C_{i+2}$. Then a similar argument applies to show that $y',x_{i+1},x_{i-1},x_{i+2}$ will not satisfy (2) of \twodisconn.
    
    The same argument works for the moreover clause.
\end{proof}

\begin{lemma}
    If $X = \overline{(M,d)}$ satisfies \nondegen, \compact, \conn, \localconn, and \twodisconn, then $X$ is homeomorphic to $S^1$.
\end{lemma}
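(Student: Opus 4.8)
The plan is to prove this by showing that $X$ is \emph{literally equal} to a simple closed curve produced by Lemma~\ref{lem:circle}, rather than appealing to an abstract topological characterization of $S^1$. By \compact, \conn, and \localconn, the space $X$ is a nondegenerate Peano continuum, hence path connected and locally path connected; being a nondegenerate continuum it is infinite, so $M$ contains infinitely many special points. I would fix four distinct special points and apply Lemma~\ref{lem:circle} to obtain an ordering $x_0,x_1,x_2,x_3$ and arcs realizing $x_0 \to x_1 \to x_2 \to x_3 \to x_0$ whose union $S$ is homeomorphic to $S^1$, where moreover any arc in $X$ from $x_i$ to $x_{i+2}$ must pass through $x_{i+1}$ or $x_{i-1}$. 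Since $S$ is a finite union of arcs it is compact, hence closed in $X$, and $S \subseteq X$. Thus it suffices to prove $M \subseteq S$: then $X = \overline{M} \subseteq S \subseteq X$ forces $X = S \cong S^1$.

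The heart of the argument is the claim that every special point lies on $S$. Suppose toward a contradiction that some special point $p$ is not on $S$; in particular $p \notin \{x_0,x_1,x_2,x_3\}$. I would apply Lemma~\ref{lem:circle} again, this time to the four distinct special points $x_0,x_1,x_2,p$, obtaining a cyclic ordering of these four points and a corresponding simple closed curve $S''$ for which the moreover clause holds: for each pair of points opposite in the cyclic order of $S''$, every arc joining them must pass through the other opposite pair. In any $4$-cycle, $p$ lies in exactly one of the two opposite pairs, say $\{p, x_c\}$ with $x_c \in \{x_0,x_1,x_2\}$; the other opposite pair is then $\{x_a,x_b\}$ with $\{x_a,x_b,x_c\} = \{x_0,x_1,x_2\}$. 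So by the moreover clause for $S''$, every arc in $X$ from $x_a$ to $x_b$ must pass through $x_c$ or through $p$.

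To contradict this I would use the original circle $S$. Since $x_c$ is a single point of the simple closed curve $S$, and $x_a, x_b$ are two further points of $S$ distinct from $x_c$, the set $S \setminus \{x_c\}$ is an arc containing both $x_a$ and $x_b$; hence there is an arc from $x_a$ to $x_b$ lying entirely within $S$ and avoiding $x_c$. This arc also avoids $p$, since $p \notin S$. This contradicts the previous paragraph, establishing the claim and hence the lemma. Note that the three cyclic orderings of $\{x_0,x_1,x_2,p\}$ (up to rotation and reflection) are all handled by this single uniform description, so no explicit case split is needed.

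The only delicate point — and the step I expect to be the main obstacle — is the reduction from arbitrary points to special points. The moreover clause of Lemma~\ref{lem:circle} is stated for special points, so it is essential that $p$ be special in order to reapply the lemma to $x_0,x_1,x_2,p$; and it is precisely the density of $M$ together with the fact that $S$ is closed that lets the conclusion ``$M \subseteq S$'' upgrade to ``$X = S$''. Everything else is bookkeeping with the cyclic structure, which is where I would take the most care to make sure the opposite-pair indexing in the moreover clause is applied correctly.
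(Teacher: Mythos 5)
Your proof is correct, but it handles the key step differently from the paper. Both arguments start the same way: build the simple closed curve $S$ through four special points via Lemma~\ref{lem:circle} and then show every other point of $X$ lies on $S$. The paper does this for an \emph{arbitrary} point $a \notin C$: it picks a special point $y$ near $a$ with an arc from $y$ to $a$ missing $C$, considers an arc from $y$ to $C$, and runs a four-way case analysis on where that arc first meets $C$, each case contradicting Lemma~\ref{lem:circle} for a suitably chosen quadruple. You instead observe that since $S$ is compact (hence closed) and $M$ is dense, it suffices to show $M \subseteq S$, and then for a special point $p \notin S$ you apply Lemma~\ref{lem:circle} \emph{once} to $x_0,x_1,x_2,p$: whichever cyclic order results, $p$ is opposite some $x_c$, so the moreover clause forces every arc between the remaining pair $x_a, x_b$ to meet $\{p, x_c\}$, while the subarc of $S \setminus \{x_c\}$ from $x_a$ to $x_b$ avoids both. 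This eliminates the case analysis and the need to produce arcs from $y$ into $C$, at the modest cost of making explicit that $S$ is closed and that the moreover clause governs all paths in $X$ (not just paths in the new curve) --- both of which hold. Your version is, if anything, the cleaner write-up; the paper's version yields slightly more along the way (it directly locates arbitrary points on $C$), but that extra information is not needed for the lemma.
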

\begin{proof}
    Since $X = \overline{(M,d)}$ satisfies \compact, \conn, and \localconn it is non-degenerate, compact, connected, locally connected, path connected, and locally path connected.

    Let $x_0,x_1,x_2,x_3$ be four special points, as in Lemma \ref{lem:circle}, and for each $i$, let $C_i$ be the arc from $x_i$ to $x_{i+1}$ with the four arcs forming a homeomorphic copy $C$ of $S^1$. We argue that any other point $a$ must lie on $C$. Suppose not, that $a$ does not lie on $C$.

    Now there is a special point $y$ near $a$, and an arc from $y$ to $a$ disjoint from $C$. There must also be an arc from $y$ to $C$. There are several possibilities, in all of which we get a contradiction:
    \begin{enumerate}
        \item The arc from $y$ to $C$ first meets $C$ at a point other than the $x_i$. Say it meets $C_i$. Then $y,x_i,x_{i+1},x_{i+2}$ contradict Lemma \ref{lem:circle}. (Indeed, there is an arc between any two of $y,x_i,x_{i+1}$ avoiding the third and $x_{i+2}$).

        \item Otherwise, suppose that the arc from $y$ to $C$ first meets $C$ at $x_i$, and that any arc from $y$ to $C$ must pass through $x_i$. Then $y,x_i,x_{i-1},x_{i+1}$ contradict Lemma \ref{lem:circle}.

        \item Otherwise, suppose that in addition to the arc from $y$ to $x_i$, there is also an arc from $y$ that first meets $C$ at $x_{i+2}$. Then $y,x_{i-1},x_{i+1},x_{i+2}$ contradicts Lemma \ref{lem:circle} because there is a path between any two of $y,x_{i-1},x_{i+1}$ avoiding the third and $x_{i+2}$.

        \item Finally, suppose that in addition to the arc from $y$ to $x_i$, there is also an arc from $y$ that first meets $C$ at $x_{i+1}$ (or the symmetric case of $x_{i-1}$). Choose a special point $z$ such that there is an arc from $z$ to the middle of $C_i$ that is disjoint from any of the other arcs.  Then $y,z,x_i,x_{i+2}$ contradicts Lemma \ref{lem:circle} because there is a path between any two of $y,z,x_{i}$ avoiding the third and $x_{i+2}$.
    \end{enumerate}
    This completes the proof that any other point $a$ must lie on $C$; that is, $C$ is in fact all of $X$ and so $X$ is homeomorphic to $S^1$.
\end{proof}

\begin{theorem}\label{thm:circle-complete}
    The set
    \[ \HCopies(S^1) = \{ Y : Y \cong_{\homeo} S^1 \}\] is $\bfPi^0_4$-Wadge-complete.
\end{theorem}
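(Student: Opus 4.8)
The plan is to treat the two directions separately. For the upper bound, the preceding lemmas together show that $X = \overline{(M,d)}$ is homeomorphic to $S^1$ if and only if it satisfies \nondegen, \compact, \conn, \localconn, and \twodisconn. Each of these is $\Pi^0_4$ or simpler (only \localconn and \twodisconn are genuinely $\Pi^0_4$), so their conjunction is $\Pi^0_4$, and hence $\HCopies(S^1)$ is $\Pi^0_4$. All the real work is therefore in the $\bfPi^0_4$-hardness.

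For hardness, the idea is to reuse the construction of Section \ref{sec:pi4} essentially verbatim, modifying it only to close the interval up into a circle. Recall that $B$ is $\bfPi^0_4$-complete and that $U \mapsto Y_U$ is a computable reduction with $U \in B$ if and only if $Y_U \cong [0,1]$; moreover, by Remark \ref{rem:startend}, $Y_U$ always contains the two distinguished special points $(0,0,0,\ldots)$ and $(1,0,0,\ldots)$, and in the case $Y_U \cong [0,1]$ these are precisely its two endpoints. Given $U$, I would form a new space $Z_U$ by attaching to $Y_U$ a fixed arc $\alpha$ (a homeomorphic copy of $[0,1]$) joining these two distinguished points, embedded using a coordinate direction of $c_0$ reserved for this purpose so that $\alpha$ meets $Y_U$ only at its two endpoints. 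Attaching a fixed arc is a simple computable operation, so $U \mapsto Z_U$ is again a computable (hence continuous) reduction.

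It then remains to verify that $Z_U \cong S^1$ if and only if $U \in B$. If $U \in B$, then $Y_U \cong [0,1]$ with the two distinguished points as endpoints, so $Z_U$ is the union of two arcs meeting exactly at their two common endpoints, which is a simple closed curve and hence homeomorphic to $S^1$. Conversely, if $U \notin B$, then some $X_{U_m}$ is not locally connected, so $Y_U$ contains a copy of a non-locally-connected space and is itself not locally connected; attaching $\alpha$ cannot repair this, so $Z_U$ is not locally connected and therefore not homeomorphic to the (locally connected) circle. The main point requiring care is exactly this non-$S^1$ direction: one must confirm that the non-local-connectedness witness of $Y_U$ (the accumulation of tendril tips at a point, as in the $W \notin A$ analysis of the $[0,1]$ construction) genuinely survives in $Z_U$ and is not an artifact cancelled off by the glued arc, and that the embedding of $\alpha$ really is disjoint from $Y_U$ away from the two endpoints, so that gluing introduces no unexpected intersections or extra topology. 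Once these routine checks are in place, composing the reduction $U \mapsto Z_U$ with the $\bfPi^0_4$-completeness of $B$ yields $\bfPi^0_4$-hardness, completing the proof.
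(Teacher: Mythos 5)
Your proposal is correct and follows essentially the same route as the paper: the upper bound comes from the conjunction of \nondegen, \compact, \conn, \localconn, and \twodisconn, and hardness is obtained by gluing a fixed arc (the paper uses an explicit three-segment polygonal path in fresh coordinate directions of $\mathbb{R}\times\mathbb{R}^{\mathbb{N}}$) onto the two distinguished endpoints of $Y_U$ from Section~\ref{sec:pi4}. Your verification that non-local-connectedness of $Y_U$ survives the gluing is a correct and slightly more explicit justification than the paper, which simply asserts that $Z_U\cong S^1$ iff $Y_U\cong[0,1]$.
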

\begin{proof}
    The other lemmas of this section show that this set is $\Pi^0_4$. To show $\bfPi^0_4$-completeness, we use the same $\Pi^0_4$ set $B$ from Section \ref{sec:pi4}. Let $Y_U$ be the topological space constructed there, with $U \in B$ if and only if $Y_U$ is homeomorphic to $[0,1]$. By Remark \ref{rem:startend}, $Y_U \subseteq \mathbb{R}^{\mathbb{N}}$ always contains $(0,0,0,\ldots)$ and $(1,0,0,\ldots)$ and, in the case that it is homeomorphic to $[0,1]$, these are the endpoints. We can easily construct from $Y_U$ a space $Z_U$ such that $Z_U$ is homeomorphic to $S^1$ if and only if $Y_U$ is homeomorphic to $[0,1]$: In $\mathbb{R} \times \mathbb{R}^\mathbb{N}$, let
    \[ Z_U = \{(t,0,0,\ldots) : t \in [0,1]\} \cup \{(1,t,0,\ldots) : t \in [0,1]\} \cup \{(t,1,0,\ldots) : t \in [0,1]\} \cup \{(0,\mathbf{u}) : \mathbf{u} \in Y_U\}.\]
    Essentially, to construct $Z_U$, we have taken a segment of $S^1$ and replaced it by $Y_U$.
\end{proof}

\section{\texorpdfstring{The real line $\mathbb{R}$}{The real line}}

In this section we will prove that  $\mathbb{R}$ has topological Scott complexity $\bfPi^1_1$.

\begin{theorem}
    $\HCopies(\mathbb{R})$ is $\bfPi^1_1$-Wadge-complete.
\end{theorem}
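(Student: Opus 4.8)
The plan is to prove the two halves separately: that $\HCopies(\mathbb{R})$ is $\bfPi^1_1$, and that it is $\bfPi^1_1$-hard. For the upper bound I would isolate the single non-Borel ingredient. Local compactness of $\overline{(M,d)}$ is $\Pi^1_1$ (indeed $\Pi^1_1$-complete, \cite{NiesSolecki}): it asserts that for every point $x$ of the completion there is a rational radius $r$ with $\overline{B}_r(x)$ of compact closure, which in a complete space is equivalent to the total boundedness of that ball, a Borel (in fact $\Pi^0_3$) condition in the parameters. Since a point $x$ is named by a fast Cauchy sequence from $M$, the phrase ``for every point $x$'' is a universal real quantifier, and a universal real quantifier over a Borel matrix is $\Pi^1_1$. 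I would then record a characterization of $\mathbb{R}$ that, \emph{within} the locally compact spaces, is $\Pi^0_4$: that $X$ is \nondegen, connected and locally connected, ``ordered'' in the betweenness sense of \ordered, non-compact, and has no endpoint (every point lies between two others). Each clause is $\Pi^0_4$ or simpler by the arguments of Section \ref{sec:three}, adapted so as not to presuppose global compactness (working ball-by-ball rather than with a single total boundedness modulus). As $\mathbb{R}$ is the unique locally compact space with these properties, $\HCopies(\mathbb{R})$ is the intersection of the $\Pi^1_1$ set of locally compact spaces with a $\Pi^0_4$ set, hence $\Pi^1_1$.

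For the lower bound I would reduce the $\bfPi^1_1$-complete set $\mathrm{WF} = \{T \subseteq \omega^{<\omega} : T \text{ is well-founded}\}$ to $\HCopies(\mathbb{R})$. Given $T$ I must produce a space $X_T$, uniformly and continuously, with $X_T \cong_{\homeo} \mathbb{R}$ iff $T \in \mathrm{WF}$. Note that every \emph{Borel} clause in the characterization above can and will be preserved by the reduction, so the only condition permitted to fail must be local compactness; this is not merely convenient but forced, since a Borel distinguishing property would make $\HCopies(\mathbb{R})$ Borel, contradicting $\bfPi^1_1$-completeness.

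The construction would adapt the tendril-and-collapse machinery of Section \ref{sec:pi4} (and the preceding $\bfSigma^0_3$ construction), re-tuned so that surviving tendrils destroy local compactness rather than local connectedness. Each tendril would rise to a fixed positive height, and the base points of a single family of tendrils would accumulate at a limit point $p$; if infinitely many such bounded-below-height tendrils survive, then every small closed ball about $p$ contains an escaping sequence of tendril-points with no limit in the space, so $X_T$ fails local compactness at $p$. The collapse dynamics would be driven by a computable approximation $\beta_s$ to the leftmost branch of $T$: tendrils are kept alive for nodes currently lying on $\beta_s$ and collapsed (absorbed into the main line, exactly as in the interval construction) when $\beta_s$ retreats below them. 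If $T$ is ill-founded, $\beta_s$ converges to an infinite branch and the nested tendrils along it survive and accumulate, so $X_T$ is not locally compact and hence $X_T \not\cong \mathbb{R}$. If $T$ is well-founded there is no infinite branch, every tendril is eventually collapsed, and---provided the geometry is arranged so that the successive collapses converge uniformly, as in the main-line limit lemma of the $[0,1]$ construction---the space straightens out into a single locally compact line, i.e.\ $X_T \cong \mathbb{R}$.

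The main obstacle, as with tree constructions generally, is to separate infinite \emph{depth} from infinite \emph{width}: a node with infinitely many leaf-children must not create a spurious accumulation of bounded-height tendrils (which would wrongly make a well-founded $T$ non-locally-compact), whereas a genuine infinite branch must create exactly such an accumulation. I would control this with the leftmost-branch bookkeeping together with spreading siblings into fresh orthogonal directions of $\mathbb{R}^{\mathbb{N}}$, in the manner of the ``twists'' of Section \ref{sec:pi4}, so that sibling tendrils remain uniformly far apart and only a nested chain---an actual branch---can cluster at a single point. The remaining technical burden is the verification, parallel to the $W \in A$ versus $W \notin A$ analysis of the $[0,1]$ case: confirming that in the well-founded case the collapses assemble, in a uniform limit, into exactly $\mathbb{R}$, with no surviving appendages and with local compactness at every point.
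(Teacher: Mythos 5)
Your upper bound takes a genuinely different route from the paper's. The paper does not re-derive a first-order-style characterization of $\mathbb{R}$ within the locally compact spaces; it observes that a locally compact space is homeomorphic to $\mathbb{R}$ iff it is non-compact and its one-point compactification (which can be built computably from a presentation, via Mandelkern's metric) is homeomorphic to $S^1$, and then reuses the already-established $\Pi^0_4$ characterization of $S^1$. Your clause-by-clause approach could in principle work, but it is not carried out, and the step you wave at --- ``adapted so as not to presuppose global compactness'' --- hides real content: the paper itself stresses that connectedness is non-Borel for general spaces and only proves \conn correct for compact ones, and the proofs that \localconn and \ordered do their job lean on compactness throughout (e.g.\ path-connectedness is extracted from compact $+$ connected $+$ locally connected). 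The compactification trick is precisely what lets the paper avoid redoing all of that.

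The lower bound is where there is a genuine gap. Your collapse dynamics are driven by ``a computable approximation $\beta_s$ to the leftmost branch of $T$,'' with the promise that $\beta_s$ converges to an infinite branch when $T$ is ill-founded and retreats past every node when $T$ is well-founded. No such approximation exists: whether a node has an infinite extension is a $\Sigma^1_1$ property of $T$, not determined by any finite amount of the oracle, and the leftmost branch of an ill-founded tree is in general not the limit of any continuous-in-$T$ sequence of guesses (this is exactly why $\mathrm{WF}$ is $\bfPi^1_1$-complete rather than Borel). Concretely, a finite-stage decision to collapse or keep a tendril must be made from finite information, but the property it is supposed to track is not finitely approximable; the guess-and-retreat paradigm you are importing from the $\bfSigma^0_3/\bfPi^0_4$ constructions only handles arithmetic-level guessing. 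Your fallback static geometry also faces an unresolved dichotomy: if fixed-height tendrils have base points accumulating at $p$, then local compactness at $p$ fails \emph{regardless} of which orthogonal directions the tendrils point in (points at height $1/2$ on distinct tendrils are pairwise far apart in the sup norm yet all lie in a small closed ball about $p$), so a node with infinitely many children in a well-founded tree already ruins the construction; if instead sibling bases are spread out so as not to accumulate, then the bases along an infinite branch do not accumulate either. The paper's construction avoids collapsing and tendrils altogether: it defines a single injective continuous path $p:\mathbb{R}\to c_0$ that visits, at integer times $n$, points $\mathbf{x}_{\sigma_n}=\sum_{\sigma\preceq\sigma_n}\chi_\sigma\mathbf{u}_\sigma$ whose mutual distances encode the tree, so that the sequence $\mathbf{x}_{\pi\res i}$ converges exactly when $\pi$ is a branch of $T$. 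The space is the closure of the image of $p$; if $T$ is well-founded the image is already closed and is a homeomorph of $\mathbb{R}$, while if $T$ has a branch the closure adds a limit point, yielding a proper dense connected extension of a copy of $\mathbb{R}$, which cannot be homeomorphic to $\mathbb{R}$. All the ``deciding'' is thus pushed into the closure operator rather than into stage-by-stage dynamics, which is what makes a $\bfPi^1_1$ reduction possible.
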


The proof is in two parts; first we give a $\Pi^1_1$ characterisation of $\mathbb{R}$, and second we prove a $\bfPi^1_1$-hardness result.

\subsection{\texorpdfstring{A $\Pi^1_1$-characterization of $\mathbb{R}$}{A Pi11-characterization of R}}

\begin{theorem}
    $\HCopies(\mathbb{R})$ is $\Pi^0_4$ within the locally compact spaces. In particular, $\HCopies(\mathbb{R})$ is $\Pi^1_1$.
\end{theorem}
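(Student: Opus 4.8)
The plan is to reduce the problem for $\mathbb{R}$ to the already-solved problem for $S^1$ by passing to the one-point compactification. The key topological observation is that for a locally compact separable metrizable space $X$, one has $X \cong_{\homeo} \mathbb{R}$ if and only if the one-point compactification $X^+$ satisfies $X^+ \cong_{\homeo} S^1$. Indeed, if $X \cong_{\homeo} \mathbb{R}$ then $X^+ \cong_{\homeo} \mathbb{R}^+ = S^1$; conversely, if $X^+ \cong_{\homeo} S^1$ then $X \cong_{\homeo} X^+ \setminus \{\infty\} \cong_{\homeo} S^1 \setminus \{\text{pt}\} \cong_{\homeo} \mathbb{R}$, using that deleting any single point from $S^1$ yields a copy of $\mathbb{R}$. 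Since $X$ is locally compact, $X^+$ is compact Hausdorff and second countable, hence metrizable, and in particular the conditions \nondegen and \compact for $X^+$ hold automatically. Thus, within the locally compact spaces, $X \cong_{\homeo} \mathbb{R}$ reduces to checking that $X^+$ satisfies \conn, \localconn, and \twodisconn, which by Theorem \ref{thm:circle-complete} characterize $S^1$ among compact spaces.

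The technical heart is to express these three conditions on $X^+$ as lightface $\Pi^0_4$ conditions on a presentation of $X$ itself. We cannot simply compute a presentation of $X^+$: producing the metric on $X^+$ requires a compact exhaustion of $X$, and the assertion that such an exhaustion exists is exactly the statement that $X$ is locally compact, which is $\Pi^1_1$ and not Borel \cite{NiesSolecki}. Instead I would treat $\infty$ as a single additional abstract special point and rewrite each of \conn, \localconn, and \twodisconn for the space $M \cup \{\infty\}$, using the fact that a basic neighbourhood of $\infty$ in $X^+$ is precisely the complement of a compact subset of $X$. Concretely, every occurrence of a basic ball, $\epsilon$-cover, or $\epsilon$-path that involves the region near $\infty$ is replaced by a clause quantifying over finite $\epsilon$-nets witnessing total boundedness of the relevant closed region of $X$; since total boundedness is $\Pi^0_3$ (as in the definition of \compact), these substitutions do not raise the complexity, and the resulting condition $A^*$ on $X$ remains $\Pi^0_4$, with the \localconn-clause at $\infty$ being the only genuinely $\Pi^0_4$ part.

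Granting this translation, $A^*$ is lightface $\Pi^0_4$, and by the topological equivalence of the first paragraph $A^* \cap \{X : X \text{ locally compact}\} = \HCopies(\mathbb{R})$, so $\HCopies(\mathbb{R})$ is $\Pi^0_4$ within the locally compact spaces. Finally, the class of locally compact spaces is $\Pi^1_1$ \cite{NiesSolecki}, and since $\Pi^0_4 \subseteq \Pi^1_1$ and $\Pi^1_1$ is closed under finite intersections, $\HCopies(\mathbb{R}) = A^* \cap \{X \text{ locally compact}\}$ is $\Pi^1_1$. I expect the main obstacle to be the bookkeeping in the second paragraph: verifying that the rewritten \localconn and \twodisconn conditions at the point $\infty$ are both correct (i.e.\ agree with the genuine conditions on $X^+$ for every locally compact $X$) and expressible at the $\Pi^0_4$ level, since the neighbourhoods of $\infty$ are complements of compact sets rather than balls, so the $\epsilon$-path arguments must be re-examined when a path is allowed to pass near infinity.
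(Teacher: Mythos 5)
Your first paragraph is exactly the paper's strategy: reduce to the $S^1$ characterization via the one-point compactification, and note that within the locally compact spaces this captures $\HCopies(\mathbb{R})$ (the paper also explicitly adds the $\Sigma^0_3$ condition that $X$ is non-compact). But your second paragraph, which you yourself identify as the technical heart, rests on a false premise and is not carried out. You claim that one cannot compute a presentation of $X^+$ from a presentation of $X$ because ``producing the metric on $X^+$ requires a compact exhaustion of $X$.'' It does not. There is an explicit formula (due to Mandelkern, and used in the paper): fix a special point $p$, set $h(x) = \frac{1}{1+d(p,x)}$, and define
\[ \hat{d}(x,y) = \min\bigl(d(x,y),\, h(x)+h(y)\bigr), \qquad \hat{d}(\infty,x) = h(x). \]
This is computable directly from the given presentation of $X$ with no exhaustion, and when $X$ is locally compact and non-compact the completion of the special points of $X$ together with $\infty$ under $\hat{d}$ is exactly the one-point compactification. (Local compactness is needed for the completion to be what you want, which is why the resulting characterization is only valid \emph{within} the locally compact spaces --- but that is all the theorem claims.) With a genuine presentation of $X^+$ in hand, one simply applies the already-proved $\Pi^0_4$ conditions for $S^1$ verbatim; no rewriting of \conn, \localconn, or \twodisconn at the point $\infty$ is needed.

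Because you rejected this route, you are left proposing to re-express each condition on $M \cup \{\infty\}$ by replacing neighbourhoods of $\infty$ with complements of compact sets and quantifying over finite $\epsilon$-nets, and you explicitly defer the verification that these rewritten conditions are correct and still $\Pi^0_4$ (``Granting this translation\dots''). That deferred verification is nontrivial --- for instance, it is not clear what an $\epsilon$-path passing ``through $\infty$'' should mean without a metric in which $\infty$ is an actual point at finite distance from everything --- and as written the proposal does not constitute a proof. The repair is not to do that bookkeeping but to notice that the obstruction you set up for yourself is illusory.
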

\begin{proof}

We begin by working within the locally compact spaces. Given a locally compact but non-compact metric space $(X,d)$, we can form the one-point compactification by adding a new point $\infty$. We can form a metric $\hat{d}$ on the compactification as follows \cite{Mandelkern}. Fix a point $p \in X$, and let $h(x) = \frac{1}{1+d(p,x)}$. Let
\[ \hat{d}(x,y) = \min(d(x,y),h(x)+h(y)), \text{ and } \hat{d}(\infty,x) = h(x).\]
This construction lets us form, from a presentation of a locally non-compact space, a presentation of the one-point compactification. Moreover, this is computable.

Then a locally compact space is homeomorphic to $\mathbb{R}$ if and only if it is non-compact and its one-point compactification is homeomorphic to $S^1$. (This is because $S^1$ is homogeneous, and $S^1$ minus any point is homeomorphic to $\mathbb{R}$.) The former is a $\Sigma^0_3$ condition and the latter is $\Pi^0_4$ as proved earlier in this paper.

The set of locally compact spaces is $\Pi^1_1$ \cite{MN13}, and thus $\HCopies(\mathbb{R})$ is $\Pi^1_1$.
\end{proof}

Using the fact that $\HCopies([0,1])$ is $\bfPi^0_4$-Wadge-complete, it is easier to prove that $\HCopies(\mathbb{R})$ is $\bfPi^0_4$-Wadge-complete within the set of locally compact spaces. The proof is essentially the same idea as Theorem \ref{thm:circle-complete} where we take a standard copy of $\mathbb{R}$ and replace a subinterval by the construction of Section \ref{sec:pi4}

\begin{proposition}
    $\HCopies(\mathbb{R})$ is $\bfPi^0_4$-Wadge-complete within the set of locally compact spaces.
\end{proposition}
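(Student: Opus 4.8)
The plan is to exhibit a computable Wadge reduction witnessing $\bfPi^0_4$-hardness within the locally compact spaces, reusing the machinery already developed for $[0,1]$. Recall that we have the $\bfPi^0_4$-complete set $B = \{U : \forall m \; U_m \in A\}$ together with the construction of Section \ref{sec:pi4}, producing from each $U$ a space $Y_U$ with $Y_U \cong_{\homeo} [0,1]$ if and only if $U \in B$. By Remark \ref{rem:startend}, $Y_U$ always contains the two distinguished points $(0,0,0,\ldots)$ and $(1,0,0,\ldots)$, and these are the two endpoints precisely when $Y_U$ is an arc. Since every $B \in \bfPi^0_4$ reduces to this set, it suffices to produce, computably from $U$, a locally compact space $R_U$ such that $R_U \cong_{\homeo} \mathbb{R}$ if and only if $U \in B$.

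First I would construct $R_U$ by the same idea used for $S^1$ in Theorem \ref{thm:circle-complete}: take a standard copy of $\mathbb{R}$ and splice in a copy of $Y_U$ in place of a bounded subinterval. Concretely, working in $\mathbb{R} \times \mathbb{R}^{\mathbb{N}}$, I would let
\[ R_U = \{(t,0,0,\ldots) : t \leq 0\} \cup \{(0,\mathbf{u}) : \mathbf{u} \in Y_U\} \cup \{(t,0,0,\ldots) : t \geq 1\}, \]
where the two rays are glued to $Y_U$ at the distinguished endpoints $(0,0,0,\ldots)$ and $(1,0,0,\ldots)$ of $Y_U$ (after an affine reparametrization so the gluing points match up). The two closed rays are plainly locally compact, and $Y_U$, being a compact space, is locally compact; since $R_U$ is their union along the two endpoints, $R_U$ is locally compact regardless of whether $U \in B$. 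This is the crucial point: the reduction must land inside the class of locally compact spaces for \emph{every} input, and here it does so automatically because $Y_U$ is always compact.

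Next I would verify the reduction property. If $U \in B$, then $Y_U$ is homeomorphic to $[0,1]$ with the splice points as its endpoints, so $R_U$ is two closed rays glued end-to-end along an arc, which is homeomorphic to $\mathbb{R}$. Conversely, if $U \notin B$ then $Y_U$ is not homeomorphic to $[0,1]$; in the construction of Section \ref{sec:pi4} this failure is witnessed by some $X_{U_m}$ being a non-locally-connected space, so $Y_U$ is not locally connected, and hence $R_U$ is not locally connected either. Since $\mathbb{R}$ is locally connected, $R_U \not\cong_{\homeo} \mathbb{R}$. Thus $R_U \cong_{\homeo} \mathbb{R}$ if and only if $U \in B$. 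Together with the upper bound that $\HCopies(\mathbb{R})$ is $\Pi^0_4$ within the locally compact spaces, this establishes $\bfPi^0_4$-completeness within that class.

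The main obstacle I anticipate is not the reduction property but confirming that the spliced space is locally compact and has the correct homeomorphism type at the two gluing points, i.e., that attaching a ray to an endpoint of $Y_U$ (or of the arc, in the positive case) produces the expected local structure and no accidental identifications or failures of local compactness at the seams. This is handled by the fact that $Y_U$ sits in $\mathbb{R}^{\mathbb{N}}$ with its two endpoints isolated in the appropriate coordinate directions, so the two rays can be attached cleanly; a small amount of care, entirely analogous to the verification in Theorem \ref{thm:circle-complete}, shows that the gluing points behave like ordinary interior points of $\mathbb{R}$ when $U \in B$. Everything in the construction is computable uniformly in $U$, so the reduction is a computable Wadge reduction, as required.
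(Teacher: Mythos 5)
Your proposal is correct and follows essentially the same route as the paper, which explicitly describes its proof as taking a standard copy of $\mathbb{R}$ and replacing a bounded subinterval by the space $Y_U$ from Section \ref{sec:pi4}, in analogy with the circle construction of Theorem \ref{thm:circle-complete}. Your verification (local compactness for every input because $Y_U$ is compact, and non-local-connectedness of $R_U$ when $U \notin B$) fills in the details at the same level the paper leaves implicit, so there is nothing substantively different to compare.
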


\subsection{\texorpdfstring{$\bfPi^1_1$-completeness}{P11-completeness}}

Now we must show that it is $\bfPi^1_1$-hard. Given a tree $T \subseteq \omega^{< \omega}$, we will produce a space $X = X_T$ such that $T$ is well-founded if and only if $X$ is homeomorphic to $\mathbb{R}$.

Let $(\sigma_n)$ be a computable listing of $\omega^{< \omega}$ with the property that if $\sigma \prec \tau$ then $\sigma$ appears before $\tau$ in this list.

We work within the ambient space $c_0$ of sequences in $\mathbb{R}^\mathbb{N}$ that converge to $0$, with the norm $\Vert \cdot \Vert_\infty$. Let $\mathbf{u}_\sigma$ (for $\sigma \in \omega^{< \omega}$) and $\mathbf{v}_n$ (for $n \in \omega$) and $\mathbf{w}$ be distinct standard basis elements. For convenience, let
\[
\chi_{\sigma} = \begin{cases}
    \frac{1}{|\sigma|} & \sigma \in T \\
    1 & \text{otherwise}
\end{cases}
\]
Given $\tau$, let
\[
\mathbf{x}_\tau = \sum_{\sigma \preceq \tau} \chi_\sigma \mathbf{u}_\sigma.
\]
Given $\pi \in \omega^\omega$, note that $\lim_{n \to \infty} \mathbf{x}_{\pi \upharpoonright n}$ converges if and only if $\pi$ is a path through $T$. If it does converge, then it converges to
\[ \mathbf{y}_\pi = \sum_{n} \frac{1}{n} \mathbf{u}_{\pi \upharpoonright n}.
\]
Let
\[
\psi_n(t) = \begin{cases}
    0 & t \leq n \\
    2(t-n) & n \leq t \leq n+\frac{1}{2} \\
    2(n+1-t) & n+\frac{1}{2} \leq t \leq n+1 \\
    0 & t \geq n+1
\end{cases}
\]
That is, $\psi_n(t)$ is $0$ at $t = n$, increases linearly to $1$ at $t = n+\frac{1}{2}$, and then decreases linearly back to $0$ at $t = n+1$.

Define a continuous function $p: \mathbb{R} \to c_0$ as follows:
\begin{itemize}
    \item For $t \leq 0$, let $p(t) = e^{-t} \mathbf{w}$.
    \item For $0 \leq t \leq 1$, let
    \[ p(t) = t \mathbf{x}_{\sigma_1} + e^{-t} \mathbf{w} \]
    \item For $1 \leq n \leq t \leq n+1$, let
    \[ p(t) = (n+1-t) \mathbf{x}_{\sigma_n} + (t-n) \mathbf{x}_{\sigma_{n+1}} + \psi_n(t) \mathbf{v}_n + e^{-t} \mathbf{w}.\]
\end{itemize}
Note that $p(n) = \mathbf{x}_{\sigma_n}$. Let $X 
= X(T)$ be the closure of the image of $p$. That is, let $M = p[\mathbb{Q}]$ be the countable metric space that is the image of the rationals, and let $X = \overline{M} = \overline{p[\mathbb{R}]}$.

It is easy to see that $p$ is injective, continuous (even uniformly continuous), and computable from $T$. In fact, $p$ is a homeomorphism onto its image (but its image is not necessarily closed in $c_0$).

\begin{lemma}\label{lem:hom-image}
    $p$ is a homeomorphism onto its image.
\end{lemma}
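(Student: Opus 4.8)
The plan is to exploit the fact that the $\mathbf{w}$-coordinate of $p(t)$ is always exactly $e^{-t}$, no matter which piece of the piecewise definition $t$ falls into. Indeed, for $t \le 0$ and for $0 \le t \le 1$ the $\mathbf{w}$-coordinate is $e^{-t}$ by definition, and for $n \le t \le n+1$ the term $e^{-t}\mathbf{w}$ contributes $e^{-t}$ while none of $\mathbf{x}_{\sigma_n}$, $\mathbf{x}_{\sigma_{n+1}}$, or $\mathbf{v}_n$ has any $\mathbf{w}$-component, since the $\mathbf{u}_\sigma$, $\mathbf{v}_n$, and $\mathbf{w}$ are distinct standard basis elements. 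Because $t \mapsto e^{-t}$ is a homeomorphism of $\mathbb{R}$ onto $(0,\infty)$, this single coordinate already recovers the parameter $t$ from the point $p(t)$.

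Continuity of $p$ itself (asserted in the surrounding text) is routine: each piece is continuous, and the pieces agree at the integer breakpoints, using $\psi_{n-1}(n) = \psi_n(n) = 0$ so that both formulas adjacent to $t = n$ reduce to $\mathbf{x}_{\sigma_n} + e^{-n}\mathbf{w}$. I would therefore take this, together with the fact that each $p(t)$ lies in $c_0$ (each $\mathbf{x}_\tau$ being finitely supported), as given.

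For the homeomorphism claim the work is entirely in the continuity of the inverse. Let $\pi_{\mathbf{w}} \colon c_0 \to \mathbb{R}$ be the projection onto the $\mathbf{w}$-coordinate; this is $1$-Lipschitz, hence continuous, for $\Vert \cdot \Vert_\infty$. By the observation above, $\pi_{\mathbf{w}}(p(t)) = e^{-t}$ for all $t$, which immediately gives injectivity, since $p(t) = p(t')$ forces $e^{-t} = e^{-t'}$ and so $t = t'$. Let $g \colon (0,\infty) \to \mathbb{R}$ be $g(s) = -\log s$, which is continuous. Then $g(\pi_{\mathbf{w}}(p(t))) = -\log(e^{-t}) = t$, so the restriction of the continuous map $g \circ \pi_{\mathbf{w}}$ to the image of $p$ is precisely $p^{-1}$. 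Hence $p^{-1}$ is continuous and $p$ is a homeomorphism onto its image.

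I expect no real obstacle: the map was designed so that the auxiliary coordinate $\mathbf{w}$ acts as a continuous ``clock'' reading off $t$, and the entire statement collapses to the continuity of a single coordinate projection followed by $-\log$. The only point needing a line of care is verifying that no basis vector other than $\mathbf{w}$ appearing in any branch of the definition of $p$ ever contributes to the $\mathbf{w}$-coordinate, which is immediate from the basis vectors being distinct. Note that the image of $p$ need not be closed in $c_0$, but this is irrelevant to being a homeomorphism onto the image.
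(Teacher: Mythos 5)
Your proof is correct and takes essentially the same route as the paper, which likewise recovers $t$ from the ever-present $e^{-t}\mathbf{w}$ term; you have simply made explicit the continuous inverse $-\log \circ\, \pi_{\mathbf{w}}$ that the paper leaves implicit. No issues.
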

\begin{proof}
    We must show given a sequence $t_i \in \mathbb{R}$, and $s \in \mathbb{R}$, that $t_i \to s$ if and only if $p(t_i) \to p(s)$. One direction follows from the fact that $p$ is continuous, and the other---that if $p(t_i) \to p(s)$ then $t_i \to s$---can be seen using the term $e^{-t} \mathbf{w}$ that always appears in $p(t)$. 
\end{proof}

We now argue that $p$ is a homeomorphism onto the \textit{closure} of its image if and only if its image is closed if and only if $T$ is well-founded.

\begin{lemma}
    Suppose that $T$ is well-founded. Then the image of $p$ is closed.
\end{lemma}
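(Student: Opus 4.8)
The plan is to locate \emph{where} a limit point outside $\mathrm{im}(p)$ could come from, and then use well-foundedness to forbid it. The key observation is that every image point $p(t)$ has $\mathbf{w}$-coordinate $e^{-t}>0$, whereas the limit of any sequence $p(s_i)$ with $s_i\to+\infty$ has $\mathbf{w}$-coordinate $0$; so such a limit can never itself be of the form $p(t)$. Thus it suffices to prove that if $T$ is well-founded, then \emph{no} sequence $p(s_i)$ with $s_i\to+\infty$ converges. Indeed, if $z=\lim_i p(s_i)$, then either $(s_i)$ has a bounded subsequence, in which case a convergent sub-subsequence $s_i\to s$ together with continuity of $p$ gives $z=p(s)\in\mathrm{im}(p)$; or $|s_i|\to\infty$, and $s_i\to-\infty$ is impossible since then $\|p(s_i)\|_\infty\ge e^{-s_i}\to\infty$. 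So any limit point lying outside the image forces a convergent sequence with $s_i\to+\infty$, which is exactly what we will rule out.

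Assume toward a contradiction that $s_i\to+\infty$ and $p(s_i)\to z$. Writing $s_i\in[n_i,n_i+1]$, the $\mathbf{v}_{n_i}$-coordinate of $p(s_i)$ is $\psi_{n_i}(s_i)$, while $z$ has $\mathbf{v}_n$-coordinate $0$ for every $n$ (each $\mathbf{v}_n$ is active only on $[n,n+1]$, so for fixed $n$ the coordinate is eventually $0$). Hence $\psi_{n_i}(s_i)\to 0$, which forces $s_i$ to approach an integer. Passing to a subsequence we may assume $s_i-n_i\to 0$ (the case $(n_i+1)-s_i\to 0$ is symmetric), and then uniform continuity of $p$ gives $p(n_i)\to z$ as well. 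Since $p(n_i)=\mathbf{x}_{\sigma_{n_i}}+e^{-n_i}\mathbf{w}$ and $e^{-n_i}\to 0$, we conclude $\mathbf{x}_{\tau_i}\to z$ with $\tau_i:=\sigma_{n_i}$. As $(\sigma_n)$ is injective and $n_i\to\infty$, we may further take the $\tau_i$ pairwise distinct. This reduces matters to analysing when a sequence of distinct $\mathbf{x}_{\tau_i}$ can converge.

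Set $P=\{\sigma:z_\sigma\neq 0\}$, where $z_\sigma$ denotes the $\mathbf{u}_\sigma$-coordinate of $z$. Coordinatewise convergence gives $z_\sigma=\lim_i \chi_\sigma[\sigma\preceq\tau_i]$, so $\sigma\in P$ iff $\sigma\preceq\tau_i$ for all large $i$; this makes $P$ a prefix-closed chain, i.e.\ the set of initial segments of some finite or infinite sequence $\pi$. The main step is to rule out $P$ finite: let $\rho$ be its longest element (the empty string if $P=\varnothing$). For large $i$ we have $\rho\prec\tau_i$, so $\tau_i$ has a child $\eta_i:=\tau_i\upharpoonright(|\rho|+1)\succ\rho$; since $\eta_i\notin P$ we have $z_{\eta_i}=0$, while the $\mathbf{u}_{\eta_i}$-coordinate of $\mathbf{x}_{\tau_i}$ is $\chi_{\eta_i}\ge \tfrac{1}{|\rho|+1}$. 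Hence $\|\mathbf{x}_{\tau_i}-z\|_\infty\ge\tfrac{1}{|\rho|+1}$ for all large $i$, contradicting $\mathbf{x}_{\tau_i}\to z$. Therefore $P$ is infinite and $\pi\in\omega^\omega$ is an infinite branch. Finally, $z\in c_0$ forces $\chi_{\pi\upharpoonright n}\to 0$; as $\chi_{\pi\upharpoonright n}=1$ whenever $\pi\upharpoonright n\notin T$, only finitely many prefixes of $\pi$ can lie outside $T$, and since $T$ is a tree this means \emph{every} prefix lies in $T$. Thus $\pi$ is an infinite path through $T$, contradicting well-foundedness.

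The step I expect to demand the most care is the reduction of the second paragraph, which turns an arbitrary convergent sequence $p(s_i)$ into the tidy form $\mathbf{x}_{\tau_i}\to z$. This is precisely where the auxiliary data earns its keep: the bump coordinates $\psi_n\mathbf{v}_n$ prevent a convergent sequence from lingering in the interior of a segment and push it onto the vertices $\mathbf{x}_{\sigma_n}$, while the decaying term $e^{-t}\mathbf{w}$ both certifies $s_i\to+\infty$ and cleanly separates prospective limit points from actual image points. Once this reduction is in hand, the combinatorial core — excluding finite $P$ and reading off an infinite $T$-branch — is short.
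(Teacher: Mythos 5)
Your proof is correct and follows essentially the same route as the paper's: use the $\mathbf{w}$- and $\mathbf{v}_n$-coordinates to force a non-convergent parameter sequence to tend to $+\infty$ and to snap onto the integer vertices, reduce to a convergent sequence of the $\mathbf{x}_{\sigma_{n_i}}$, and then extract an infinite path through $T$. The only difference is that you spell out the final combinatorial step (the prefix-chain $P$ argument and the use of $z \in c_0$ to put the branch inside $T$), which the paper dismisses as ``not hard to see.''
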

\begin{proof}
    Suppose that $p(t_i) \to \mathbf{z}$, and $t_i$ does not converge. Write
    \[ \mathbf{z} = \sum_\sigma a_\sigma \mathbf{u}_\sigma + \sum_n b_n \mathbf{v}_n + c \mathbf{w}.\]
    We must have $c = \lim_{t \to \infty} e^{- t_i}$ and so, since the $t_i$ do not converge, $c = 0$ and $t_i \to \infty$. Since $t_i \to \infty$, we must have that $b_n = 0$ for all $n$.

    Thus
    \[ \mathbf{z} = \sum_\sigma a_\sigma \mathbf{u}_\sigma.\]
    By passing to a subsequence, we may assume that for each $i$ there is $n_i \in \mathbb{N}$ such that $t_i \in [n_i-\frac{1}{2},n_i+\frac{1}{2}]$, and moreover we may assume that the $n_i$ form a strictly increasing sequence. Also, since $\mathbf{z}$ does not involve any of the $\mathbf{v}_n$, it must be that
    \[ \lim_{i \to \infty} |n_i - t_i| = 0.\]
    So in fact we may assume that $t_i = n_i$, and $p(n_i) = \mathbf{x}_{\sigma_{n_i}}$.

    Now it is not hard to see that the only way to have $\lim_{i \to \infty} p(n_i)$ exist is to have the $\sigma_{n_i}$ lie on a path $\pi$ through $T$, and $\lim_{i \to \infty} p(n_i) = \mathbf{y}_\pi$. Thus $T$ has a path.

    So we conclude that if $T$ is well-founded, then $\lim_{i \to \infty} p(t_i)$ converges if and only if $t_i$ converges; in particular, $p$ is a homeomorphism of $\mathbb{R}$ onto its image and its image is closed in $c_0$.
\end{proof}

Thus if $T$ is well-founded, $X = \overline{M} = \overline{p[\mathbb{R}]} = p[\mathbb{R}]$, and this is homeomorphic to $\mathbb{R}$ by Lemma \ref{lem:hom-image}.

\begin{lemma}
    Suppose that $T$ has an infinite path $\pi$. Then there is an increasing integer sequence $n_i$ such that $\lim_{i \to \infty} p(n_i)$ converges.
\end{lemma}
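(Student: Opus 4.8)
The plan is to read the desired sequence directly off the path $\pi$, using the ordering property of the fixed listing $(\sigma_n)$. Since $\pi$ is an infinite path through $T$, every initial segment $\pi \upharpoonright k$ lies in $T$, and in particular $\pi \upharpoonright k \in \omega^{<\omega}$, so it occurs in the listing; let $n_k$ be the unique index with $\sigma_{n_k} = \pi \upharpoonright k$. I claim $(n_k)_{k}$ is the sought-after increasing integer sequence.

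First I would verify that $(n_k)_k$ is strictly increasing, which is the only point where any care is needed. Here the defining property of the listing does all the work: because $\pi \upharpoonright k \prec \pi \upharpoonright (k+1)$ is a proper initial segment, the listing places $\pi \upharpoonright k$ strictly before $\pi \upharpoonright (k+1)$, i.e. $n_k < n_{k+1}$. Hence $(n_k)_k$ is a strictly increasing sequence of natural numbers.

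Next I would identify the limit. Recall from the construction that $p(n) = \mathbf{x}_{\sigma_n}$, so $p(n_k) = \mathbf{x}_{\sigma_{n_k}} = \mathbf{x}_{\pi \upharpoonright k}$. By the observation recorded when $\mathbf{x}_\tau$ was introduced, $\lim_{k \to \infty} \mathbf{x}_{\pi \upharpoonright k}$ converges exactly because $\pi$ is a path through $T$, and the limit is $\mathbf{y}_\pi = \sum_n \tfrac{1}{n}\mathbf{u}_{\pi \upharpoonright n}$. Concretely, since each $\pi \upharpoonright j \in T$ we have $\chi_{\pi \upharpoonright j} = \tfrac{1}{j}$, so $\mathbf{x}_{\pi \upharpoonright k}$ is the partial sum $\sum_{j \le k} \tfrac{1}{j}\mathbf{u}_{\pi \upharpoonright j}$, and the sup-norm of the tail $\Vert \mathbf{y}_\pi - \mathbf{x}_{\pi \upharpoonright k}\Vert_\infty$ is bounded by $\tfrac{1}{k+1} \to 0$. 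Therefore $\lim_{i \to \infty} p(n_i)$ converges (to $\mathbf{y}_\pi$), as required.

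There is no substantive obstacle here: the statement is essentially a bookkeeping consequence of the construction, and the entire content is that the order-preserving property of $(\sigma_n)$ lets us turn the convergent sequence of initial segments of $\pi$ into a convergent sequence of the sample values $p(n_i)$. I would simply emphasize this one point about the listing to make the increasingness of $(n_k)$ transparent, and otherwise cite the earlier observation about $\mathbf{x}_{\pi \upharpoonright n}$ for the convergence.
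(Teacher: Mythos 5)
Your proof is correct and follows essentially the same route as the paper: take $n_i$ to be the indices with $\sigma_{n_i} = \pi\upharpoonright i$, use the order-preserving property of the listing for monotonicity, and conclude convergence to $\mathbf{y}_\pi$ from the earlier observation about $\mathbf{x}_{\pi\upharpoonright i}$. The only (harmless) nitpick is that $p(n_i)$ is really $\mathbf{x}_{\pi\upharpoonright i} + e^{-n_i}\mathbf{w}$ rather than $\mathbf{x}_{\pi\upharpoonright i}$, but the extra term vanishes as $n_i \to \infty$, so the limit is unchanged.
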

\begin{proof}
    Let $\pi$ be a path through $T$, and let $n_i$ be an increasing integer sequence such that $\sigma_{n_i} = \pi \res i$. Then $p(n_i) = \mathbf{x}_{\pi \upharpoonright i} + e^{-n_i} \mathbf{w}$, and 
    \[ \lim_{i \to \infty} p(n_i) = \mathbf{y}_\pi = \sum_{n} \frac{1}{n} \mathbf{u}_{\pi \upharpoonright n}.\]
    Thus $p$ maps a non-convergent sequence to a convergent sequence, and so cannot be a homeomorphism onto its image.
\end{proof}

If $T$ is not well-founded, then $X = \overline{M} = \overline{p[\mathbb{R}]}$ contains a homeomorphic image of $\mathbb{R}$ as a proper dense subset. But $\mathbb{R}$ does not contain a homeomorphic image of $\mathbb{R}$ as a proper dense subspace (indeed, $\mathbb{R}$ has no proper connected dense subsets). Thus $X$ is not homeomorphic to $\mathbb{R}$.

\bibliography{References}

@incollection {Scott65,
	AUTHOR = {Scott, Dana},
	TITLE = {Logic with denumerably long formulas and finite strings of
	quantifiers},
	BOOKTITLE = {Theory of {M}odels ({P}roc. 1963 {I}nternat. {S}ympos.
	{B}erkeley)},
	PAGES = {329--341},
	PUBLISHER = {North-Holland, Amsterdam},
	YEAR = {1965},
	MRCLASS = {02.35},
	MRNUMBER = {0200133},
	MRREVIEWER = {E. Engeler},
}

@article{Camerlo,
 ISSN = {00029947},
 URL = {http://www.jstor.org/stable/3845192},
 abstract = {We study several natural classes and relations occurring in continuum theory from the viewpoint of descriptive set theory and infinite combinatorics. We provide useful characterizations for the relation of likeness among dendrites and show that it is a bqo with countably many equivalence classes. For dendrites with finitely many branch points the homeomorphism and quasi-homeomorphism classes coincide, and the minimal quasi-homeomorphism classes among dendrites with infinitely many branch points are identified. In contrast, we prove that the homeomorphism relation between dendrites is $S_{\infty}-universal$. It is shown that the classes of trees and graphs are both $D_{2}(\Sigma_{3}^{0})-complete$, the class of dendrites is $\Pi_{3}^{0}-complete$, and the class of all continua homeomorphic to a graph or dendrite with finitely many branch points is $\Pi_{3}^{0}-complete$. We also show that if G is a nondegenerate finitely triangulable continuum, then the class of G-like continua is $\Pi_{2}^{0}-complete$.},
 author = {Riccardo Camerlo and Udayan B. Darji and Alberto Marcone},
 journal = {Transactions of the American Mathematical Society},
 number = {11},
 pages = {4301--4328},
 publisher = {American Mathematical Society},
 title = {Classification Problems in Continuum Theory},
 urldate = {2025-11-06},
 volume = {357},
 year = {2005}
}

@article{HTMelnikov,
	AUTHOR = {Harrison-Trainor, Matthew and Melnikov, Alexander},
	TITLE = {An arithmetical characterization of closed surfaces},
	JOURNAL = {Transactions of the AMS},
	Note = {Early view.},
year = {2024}
}

@article{Turing:36,
	Added-At = {2011-12-15T17:25:32.000+0100},
	Author = {Turing, Alan M.},
	Journal = {Proceedings of the London Mathematical Society},
	Keywords = {logic},
	Pages = {230--265},
	Series = 2,
	Timestamp = {2011-12-15T17:25:32.000+0100},
	Title = {On Computable Numbers, with an application to the Entscheidungsproblem},
	Volume = 42,
	Year = 1936}

@article {Lp2,
    AUTHOR = {Brown, Tyler and McNicholl, Timothy H.},
     TITLE = {Analytic computable structure theory and {$L^p$}-spaces part
              2},
   JOURNAL = {Arch. Math. Logic},
  FJOURNAL = {Archive for Mathematical Logic},
    VOLUME = {59},
      YEAR = {2020},
    NUMBER = {3-4},
     PAGES = {427--443},
      ISSN = {0933-5846,1432-0665},
   MRCLASS = {03D45 (03D78 46B04 46E30)},
  MRNUMBER = {4081068},
MRREVIEWER = {Wesley\ Calvert},
       DOI = {10.1007/s00153-019-00697-4},
       URL = {https://doi.org/10.1007/s00153-019-00697-4},
}

@incollection {NiesSolecki,
    AUTHOR = {Nies, Andr\'{e} and Solecki, Slawomir},
     TITLE = {Local compactness for computable {P}olish metric spaces is
              {$\Pi^1_1$}-complete},
 BOOKTITLE = {Evolving computability},
    SERIES = {Lecture Notes in Comput. Sci.},
    VOLUME = {9136},
     PAGES = {286--290},
 PUBLISHER = {Springer, Cham},
      YEAR = {2015},
      ISBN = {978-3-319-20028-6; 978-3-319-20027-9},
   MRCLASS = {03D45 (03E15)},
  MRNUMBER = {3382368},
MRREVIEWER = {Arnold\ W.\ Miller},
       DOI = {10.1007/978-3-319-20028-6\_29},
       URL = {https://doi.org/10.1007/978-3-319-20028-6_29},
}

@article {DebsSR,
		AUTHOR = {Debs, Gabriel and Saint Raymond, Jean},
		TITLE = {The descriptive complexity of connectedness in {P}olish
		spaces},
		JOURNAL = {Fund. Math.},
		FJOURNAL = {Fundamenta Mathematicae},
		VOLUME = {249},
		YEAR = {2020},
		NUMBER = {3},
		PAGES = {261--286},
		ISSN = {0016-2736,1730-6329},
		MRCLASS = {03E15 (28A05 54D05 54H05)},
		MRNUMBER = {4078216},
		MRREVIEWER = {Marek\ Balcerzak},
		DOI = {10.4064/fm754-7-2019},
		URL = {https://doi.org/10.4064/fm754-7-2019},
	}

@book {Nadler92,
    AUTHOR = {Nadler, Jr., Sam B.},
     TITLE = {Continuum theory},
    SERIES = {Monographs and Textbooks in Pure and Applied Mathematics},
    VOLUME = {158},
      NOTE = {An introduction},
 PUBLISHER = {Marcel Dekker, Inc., New York},
      YEAR = {1992},
     PAGES = {xiv+328},
      ISBN = {0-8247-8659-9},
   MRCLASS = {54-02 (54C05 54E45 54F15 54F50)},
  MRNUMBER = {1192552},
MRREVIEWER = {Hidefumi\ Katsuura},
}

@article {Lp,
    AUTHOR = {Clanin, Joe and McNicholl, Timothy H. and Stull, Don M.},
     TITLE = {Analytic computable structure theory and {$L^p$} spaces},
   JOURNAL = {Fund. Math.},
  FJOURNAL = {Fundamenta Mathematicae},
    VOLUME = {244},
      YEAR = {2019},
    NUMBER = {3},
     PAGES = {255--285},
      ISSN = {0016-2736,1730-6329},
   MRCLASS = {03D45 (03D78 46B04)},
  MRNUMBER = {3893426},
MRREVIEWER = {Carl\ Mummert},
       DOI = {10.4064/fm448-5-2018},
       URL = {https://doi.org/10.4064/fm448-5-2018},
}

@incollection {MN13,
    AUTHOR = {Melnikov, Alexander G. and Nies, Andr\'{e}},
     TITLE = {The classification problem for compact computable metric
              spaces},
 BOOKTITLE = {The nature of computation},
    SERIES = {Lecture Notes in Comput. Sci.},
    VOLUME = {7921},
     PAGES = {320--328},
 PUBLISHER = {Springer, Heidelberg},
      YEAR = {2013},
      ISBN = {978-3-642-39053-1; 978-3-642-39052-4},
   MRCLASS = {03F60 (03C57)},
  MRNUMBER = {3102033},
       DOI = {10.1007/978-3-642-39053-1\_37},
       URL = {https://doi.org/10.1007/978-3-642-39053-1_37},
}

@article{Turing:37,
	Author = {Turing, Alan M.},
	Journal = {Proceedings of the London Mathematical Society},
	Keywords = {logic},
	Pages = {544--546},
	Series = 2,
	Timestamp = {2011-12-15T17:25:32.000+0100},
	Title = {On {C}omputable {N}umbers, with an {A}pplication to the {E}ntscheidungsproblem. {A} {C}orrection},
	Volume = 43,
	Year = 1937}

@unpublished{Mon,
    author = {Antonio Montalbán},
    title = {Computable Structure Theory: Beyond the arithmetic},
    note = {draft}, 
}

@article {MetricScott,
    AUTHOR = {Ben Yaacov, Ita\"{\i} and Doucha, Michal and Nies, Andr\'{e}
              and Tsankov, Todor},
     TITLE = {Metric {S}cott analysis},
   JOURNAL = {Adv. Math.},
  FJOURNAL = {Advances in Mathematics},
    VOLUME = {318},
      YEAR = {2017},
     PAGES = {46--87},
      ISSN = {0001-8708,1090-2082},
   MRCLASS = {03C75 (03E15)},
  MRNUMBER = {3689736},
MRREVIEWER = {Barbara\ Majcher-Iwanow},
       DOI = {10.1016/j.aim.2017.07.021},
       URL = {https://doi.org/10.1016/j.aim.2017.07.021},
}

@unpublished{Gelfand,
	AUTHOR = {Peter Burton and Chris Eagle and Alec Fox and Isaac Goldbring and Matthew Harrison-Trainor and Tim McNicholl and Alexander Melnikov and Russell Miller and Teerawat Thewmorakot},
	TITLE = {Effective {G}elfand Duality},
	Note = {Preprint.}
}

@book{downeymelnikov,
  author    = {Rodney G. Downey and Alexander Melnikov},
  title     = {Computable Structure Theory: A Unified Approach},
  series    = {Theory and Applications of Computability},
  year      = {2025},
  edition   = {1},
  publisher = {Springer Cham},
  doi       = {10.1007/978-3-031-92433-0},
  isbn      = {978-3-031-92433-0},
  hardcoverisbn = {978-3-031-92432-3},
  softcoverisbn = {978-3-031-92435-4},
  pages     = {XII + 540},
  url       = {https://doi.org/10.1007/978-3-031-92433-0},
  issn      = {2190-619X},
  eissn     = {2190-6203},
}

@article {EFPRTT,
    AUTHOR = {Elliott, George A. and Farah, Ilijas and Paulsen, Vern I. and
              Rosendal, Christian and Toms, Andrew S. and T\"{o}rnquist,
              Asger},
     TITLE = {The isomorphism relation for separable {${\rm C}^*$}-algebras},
   JOURNAL = {Math. Res. Lett.},
  FJOURNAL = {Mathematical Research Letters},
    VOLUME = {20},
      YEAR = {2013},
    NUMBER = {6},
     PAGES = {1071--1080},
      ISSN = {1073-2780,1945-001X},
   MRCLASS = {47L40},
  MRNUMBER = {3228621},
MRREVIEWER = {Dinesh\ Jayantilal\ Karia},
       DOI = {10.4310/MRL.2013.v20.n6.a6},
       URL = {https://doi.org/10.4310/MRL.2013.v20.n6.a6},
}

@book {Wadge,
	AUTHOR = {Wadge, William Wilfred},
	TITLE = {Reducibility and determinateness on the {B}aire space},
	NOTE = {Thesis (Ph.D.)--University of California, Berkeley},
	PUBLISHER = {ProQuest LLC, Ann Arbor, MI},
	YEAR = {1983},
	PAGES = {334},
	MRCLASS = {Thesis},
	MRNUMBER = {2633374},
	URL =
	{http://gateway.proquest.com/openurl?url_ver=Z39.88-2004&rft_val_fmt=info:ofi/fmt:kev:mtx:dissertation&res_dat=xri:pqdiss&rft_dat=xri:pqdiss:8413622},
}

@article{Mandelkern,
 ISSN = {00029939, 10886826},
 URL = {http://www.jstor.org/stable/2047675},
 abstract = {A new, more widely applicable, constructive definition of locally compact metric space is given, and a metric one-point compactification is constructed. Classically, this provides a simpler, more direct, construction of a metric on the one-point compactification of a separable locally compact metric space.},
 author = {Mark Mandelkern},
 journal = {Proceedings of the American Mathematical Society},
 number = {4},
 pages = {1111--1115},
 publisher = {American Mathematical Society},
 title = {Metrization of the One-Point Compactification},
 urldate = {2024-05-31},
 volume = {107},
 year = {1989}
}

@article {McCoyWallbaum,
	AUTHOR = {McCoy, Charles and Wallbaum, John},
	TITLE = {Describing free groups, {P}art {II}: {$\Pi^0_4$} hardness and
	no {$\Sigma_2^0$} basis},
	JOURNAL = {Trans. Amer. Math. Soc.},
	FJOURNAL = {Transactions of the American Mathematical Society},
	VOLUME = {364},
	YEAR = {2012},
	NUMBER = {11},
	PAGES = {5729--5734},
	ISSN = {0002-9947},
	MRCLASS = {03C57 (03C75 20E05)},
	MRNUMBER = {2946929},
	MRREVIEWER = {O. V. Belegradek},
	DOI = {10.1090/S0002-9947-2012-05458-4},
	URL = {https://doi-org.helicon.vuw.ac.nz/10.1090/S0002-9947-2012-05458-4},
}

@article {CHKLMMMQW,
	AUTHOR = {Carson, J. and Harizanov, V. and Knight, J. and Lange, K. and
	McCoy, C. and Morozov, A. and Quinn, S. and Safranski, C. and
	Wallbaum, J.},
	TITLE = {Describing free groups},
	JOURNAL = {Trans. Amer. Math. Soc.},
	FJOURNAL = {Transactions of the American Mathematical Society},
	VOLUME = {364},
	YEAR = {2012},
	NUMBER = {11},
	PAGES = {5715--5728},
	ISSN = {0002-9947},
	MRCLASS = {03C57 (03C75 20E05)},
	MRNUMBER = {2946928},
	MRREVIEWER = {O. V. Belegradek},
	DOI = {10.1090/S0002-9947-2012-05456-0},
	URL = {https://doi-org.helicon.vuw.ac.nz/10.1090/S0002-9947-2012-05456-0},
}

@article {HTHo,
	AUTHOR = {Harrison-Trainor, Matthew and Ho, Meng-Che},
	TITLE = {On optimal {S}cott sentences of finitely generated algebraic
	structures},
	JOURNAL = {Proc. Amer. Math. Soc.},
	FJOURNAL = {Proceedings of the American Mathematical Society},
	VOLUME = {146},
	YEAR = {2018},
	NUMBER = {10},
	PAGES = {4473--4485},
	ISSN = {0002-9939},
	MRCLASS = {03D45 (03C57 20E06 20F06 20F10)},
	MRNUMBER = {3834672},
	MRREVIEWER = {Oscar Levin},
	DOI = {10.1090/proc/14063},
	URL = {https://doi-org.helicon.vuw.ac.nz/10.1090/proc/14063},
}

@article{ZIELINSKI2016635,
title = {The complexity of the homeomorphism relation between compact metric spaces},
journal = {Advances in Mathematics},
volume = {291},
pages = {635-645},
year = {2016},
issn = {0001-8708},
doi = {https://doi.org/10.1016/j.aim.2015.11.051},
url = {https://www.sciencedirect.com/science/article/pii/S0001870816000153},
author = {Joseph Zielinski},
keywords = {Equivalence relation, Borel reducibility, Homeomorphism, Compact metric space},
abstract = {We determine the exact complexity of classifying compact metric spaces up to homeomorphism. More precisely, the homeomorphism relation on compact metric spaces is Borel bi-reducible with the complete orbit equivalence relation of Polish group actions. Consequently, the same holds for the isomorphism relation between separable commutative C*-algebras and the isometry relation between C(K)-spaces.}
}

@article {LopezEscobar,
	AUTHOR = {Lopez-Escobar, E. G. K.},
	TITLE = {An interpolation theorem for denumerably long formulas},
	JOURNAL = {Fund. Math.},
	FJOURNAL = {Polska Akademia Nauk. Fundamenta Mathematicae},
	VOLUME = {57},
	YEAR = {1965},
	PAGES = {253--272},
	ISSN = {0016-2736,1730-6329},
	MRCLASS = {02.35},
	MRNUMBER = {188059},
	MRREVIEWER = {H.\ Jerome\ Keisler},
	DOI = {10.4064/fm-57-3-253-272},
	URL = {https://doi.org/10.4064/fm-57-3-253-272},
}

@book {MR1039321,
    AUTHOR = {Engelking, Ryszard},
     TITLE = {General topology},
    SERIES = {Sigma Series in Pure Mathematics},
    VOLUME = {6},
   EDITION = {Second},
      NOTE = {Translated from the Polish by the author},
 PUBLISHER = {Heldermann Verlag, Berlin},
      YEAR = {1989},
     PAGES = {viii+529},
      ISBN = {3-88538-006-4},
   MRCLASS = {54-01 (54-02)},
  MRNUMBER = {1039321},
MRREVIEWER = {Gary\ Gruenhage},
}

@book {MR0264581,
    AUTHOR = {Willard, Stephen},
     TITLE = {General topology.},
 PUBLISHER = {Addison-Wesley Publishing Co., Reading, Mass.-London-Don
              Mills, Ont., },
      YEAR = {1970},
     PAGES = {xii+369},
   MRCLASS = {54.00},
  MRNUMBER = {264581},
MRREVIEWER = {C.\ W.\ Kohls},
}

@article {AGHTT,
    AUTHOR = {Alvir, Rachael and Greenberg, Noam and Harrison-Trainor,
              Matthew and Turetsky, Dan},
     TITLE = {Scott complexity of countable structures},
   JOURNAL = {J. Symb. Log.},
  FJOURNAL = {The Journal of Symbolic Logic},
    VOLUME = {86},
      YEAR = {2021},
    NUMBER = {4},
     PAGES = {1706--1720},
      ISSN = {0022-4812,1943-5886},
   MRCLASS = {03D45 (03C57 03C70)},
  MRNUMBER = {4362932},
MRREVIEWER = {Wesley\ Calvert},
       DOI = {10.1017/jsl.2021.4},
       URL = {https://doi.org/10.1017/jsl.2021.4},
}
\bibliographystyle{alpha}
	
\end{document}